\documentclass[12pt,a4paper]{article}
\usepackage[english]{babel}
\usepackage{amsthm}
\usepackage{amsmath}
\usepackage{amsfonts}
\usepackage{amssymb}
\usepackage{makeidx}
\usepackage{graphicx}
\usepackage[normalem]{ulem}
\usepackage{color}
\usepackage[colorlinks=true,linkcolor=blue,citecolor=blue,urlcolor=black]{hyperref}
\usepackage[left=3.2cm,right=3.4cm,top=3cm,bottom=2.5cm]{geometry}
\usepackage{fancyhdr}
\pagestyle{fancy}
\providecommand{\ams}[1]{\textit{AMS 2010 Subject Classification.} #1}
\providecommand{\keywords}[1]{\textit{Keywords.} #1}

\def\sec{\setcounter{equation}{0}\setcounter{figure}{0}}
\def\mP{\mathbb P}
\newcommand{\EXCLUDE}[1]{}
\newcommand{\no}{\nonumber}

\newcommand{\pr}[1]{\mathbb{P}\left\{ #1 \right\}}

\newcommand{\EXP}[1]{\mathbb{E}\!\left\{#1\right\} }

\newcommand{\VAR}[1]{\mathsf{VAR}\!\left(#1\right) }
\newcommand{\remove}[1]{}
\def\P{\mathbb{P}}
\def\E{\mathbb{E}}
\newcommand{\beq}{\begin{eqnarray}}
\newcommand{\eeq}{\end{eqnarray}}
\newcommand{\beqq}{\begin{eqnarray*}}
	\newcommand{\eeqq}{\end{eqnarray*}}
\def\:{:\,}

\newtheorem{theorem}{Theorem}[section]

\newtheorem{lemma}[theorem]{Lemma}

\newtheorem{remark}[theorem]{\bf Remark}
\newtheorem{definition}[theorem]{\bf Definition}

\def\sec{\setcounter{equation}{0}}

\def\Poi{\textup{Poi}}
\def\Bin{\textup{Bin}}

\def\ker{\textup{ker}}
\def\coker{\textup{coker}}
\def\im{\textup{im}}
\def\Cech{\v{C}ech\ }
\def\C{{\cal C}}
\def\cCB{{\cal C}_B}

\def\Rar{\Rightarrow}
\def\rar{\rightarrow}

\newcommand{\be}{\widehat{\beta}}

\newcommand{\ep}{\varepsilon}

\newcommand{\lam}{\lambda}


\def\mR{\mathbb{R}}

\def\mZ{\mathbb{Z}}

\def\mN{\mathbb{N}}

\def\mM{\mathbb{M}}

\def\definedas{\stackrel{\Delta}{=}}

\def\definedas{\stackrel{\Delta}{=}}

\def\convas{\stackrel{a.s. }{\to}}

\newcommand{\md}{{\,d}}

\newcommand{\cP}{\mathcal{P}}
\newcommand{\cU}{{\mathcal U}}
\newcommand{\cC}{{\mathcal C}}
\newcommand{\cX}{{\mathcal X}}

\newcommand{\cF}{{\mathcal F}}
\newcommand{\cK}{{\mathcal K}}
\newcommand{\cL}{{\mathcal L}}

\def\1{\mathbf{1}}

\def\smallhalf{\mbox{$\frac{1}{2}$}}
\def\smallquarter{\mbox{$\frac{1}{4}$}}
\fancyhead[RO,LE]{Yogeshwaran, Subag, Adler}
\fancyhead[LO,RE]{Random complexes}
\fancyhead[C]{\thepage}
\fancyfoot{}
\begin{document}
	
	
	
	\title{Random geometric complexes in the thermodynamic regime 	}

	
	\author{D.\ Yogeshwaran\footnote{Stat. Math. Unit, Indian Statistical Institute, Bangalore, India. Research supported in part by FP7-ICT-318493-STREP.}, Eliran Subag\footnote{Mathematics and Computer Science, Weizmann Institute, Rehovot, Israel. Research supported in part by Israel Science Foundation, 853/10.}, and Robert J.\ Adler\footnote{Electrical Engineering, Technion, Haifa, Israel.
			Research supported in part by AFOSR FA8655-11-1-3039 and ERC  2012 Advanced Grant 20120216.}}
	
	\maketitle
	
	\begin{abstract}
		{\footnotesize
			We consider the topology of simplicial complexes with vertices the points of a random point process and faces determined by distance relationships between the vertices. In particular, we study the Betti numbers of these complexes as the number of vertices becomes large, obtaining limit theorems for means, strong laws, concentration inequalities and central limit theorems.
			
			As opposed to most  prior papers treating random complexes,  the limit with which we work is in the so-called `thermodynamic'  regime (which includes the percolation threshold) in which the complexes become very large and complicated, with complex homology characterised by diverging Betti numbers. The proofs combine  probabilistic arguments from the theory of stabilizing functionals  of point processes and  topological arguments exploiting the properties of Mayer-Vietoris exact sequences. The   Mayer-Vietoris arguments are crucial, since homology in general, and Betti numbers in particular, are global rather than local phenomena, and most standard probabilistic arguments are based on the additivity of  functionals arising as a consequence of locality.}
		
	\end{abstract}
	
	\ams{ Primary: 60B99 
		60D05, 
		05E45; 
		Secondary: 60F05, 
		55U10. 
	}
	
	\keywords{Point processes, Boolean model, random geometric complexes, limit theorems.}

	\section{Introduction}
	\label{sec:intro}
	\sec
	
	This paper is concerned with structures created by taking (many) random points and building the structure based on neighbourhood relations between the points.  Perhaps the simplest way to describe this is to let $\Phi = \{x_1,x_2,\dots\}$ be a finite or countable, locally finite, subset of points in $\mR^d$, for some $d>1$, and to consider the  set
	\begin{equation}
	\label{defn:boolean}
	\cCB(\Phi,r) \definedas \bigcup_{x \in \Phi}B_{x}(r),
	\end{equation}
	where $0<r<\infty$, and $B_x(r)$ denotes the $d$-dimensional ball of radius $r$ centred at $x \in \mR^d$.
	
	When the points of $\Phi$ are those of a stationary Poisson process on $\mR^d$, this union is a special case of a
	`Boolean model', and its integral geometric properties  -- such as volume, surface area, Minkowski functionals -- have been studied in the setting of stochastic geometry since the earliest days of that subject. Our interest, however, lies in the homological structure of $\cCB(\Phi,r)$, in particular, as expressed through its Betti numbers. Thus our approach will be via the tools of algebraic topology, and, to facilitate this, we shall generally work not with $\cCB(\Phi,r)$ but with a homotopically
	equivalent  abstract simplical complex with a natural combinatorial structure. This will be the \Cech complex with radius $r$ built over the point set $\Phi$,  denoted by $\cC(\Phi,r)$,  and defined below in Section \ref{sec:topology}.
	
	The first, and perhaps most natural topological question to ask about these sets is how connected are they.  This is more a graph theoretic    question than a topological one, and has been well studied in this setting, with \cite{Penrose03} being the standard text in the area. There are various `regimes' in which it is natural to study these questions,
	depending on the radius $r$. If $r$ is small, then the balls in \eqref{defn:boolean} will only rarely overlap, and so
	the topology of  both $\cCB(\Phi,r)$  and  $\cC(\Phi,r)$ will be  mainly that of many isolated points. This is known as the `dust regime'. However, as $r$ grows,
	the balls will tend to overlap, and so a large, complex structure will form, leading to the notion of `continuum percolation', for which the standard references are  \cite{Hall88} and \cite{Meester96}. The percolation transition occurs within what  is known as the  `thermodynamic', regime (described in more detail in Section \ref{sec:point_process}), and is typically the hardest to analyse. The third and final regime arises as $r$ continues to grow, and (loosely speaking)
	$\cCB(\Phi,r)$  merges into a single large set with no empty subsets and so no interesting topology.
	
	Motivated mainly by issues in
	topological data analysis (e.g.\  \cite{Niyogi08,Niyogi11}) there has been considerable recent interest
	in the topological properties   of $\cCB(\Phi,r)$  and  $\cC(\Phi,r)$   that go beyond mere connectivity or the volumetric measures provided by integral geometry. These studies were
	initiated by Matthew Kahle in \cite{Kahle11}, in a paper which studied the growth of the expected Betti numbers of these sets when the underlying
	point process $\Phi$ was either a Poisson process or a random sample from a distribution satisfying mild regularity properties.
	Shortly afterwards, more sophisticated distributional results were proven in   \cite{Kahle11a}.
	An extension  to more general stationary point processes $\Phi$ on $\mR^d$ can be found in \cite{Adler12},  while, in the Poisson
	and binomial settings,  \cite{Bobrowski11} looks at these problems from the point of view of the Morse theory of the distance function.
	Recently   \cite{Bobrowski13} has established  important -- from the point of view of applications --  extensions to the results of \cite{Bobrowski11,Kahle11,Kahle11a}  in which the underlying point process lies on a manifold of lower dimension than an ambient Euclidean space in which the balls of
	\eqref{defn:boolean} are defined. See also the recent survey \cite{Bobrowski14}.

	However, virtually all of the results described in the previous paragraph (with the notable exception of some growth results for expected  Betti numbers in \cite{Kahle11} and numbers of critical points in  \cite{Bobrowski11}) deal with the topology of the dust regime. What is new in the current paper is a focus on the thermodynamic regime, and  new results that go beyond the earlier ones about   expectations.  Moreover, because of the long range dependencies in the thermodynamic regime, proofs here involve considerably more topological arguments than is the case for the dust regime.
	
	Our main results are summarised in the following subsection, after which we shall give some more details about the current literature.
	Then, in Section \ref{sec:prelims}, we shall recall some basic notions from topology and from the theory of point processes. The new results begin in Section \ref{sec:stationary_pp}, where we shall treat the setting of general  stationary point processes, while the Poisson and binomial  settings will be treated in Section \ref{sec:Poisson_Binomial}.
	\remove{We shall conclude in Section \ref{sec:future} with some general remarks and open
		problems.}
	The paper concludes with some appendices containing variations  of some known tools, adapted to our needs.
	
	\subsection{Summary of Results}
	\label{sec:summary}
	Throughout the paper we shall assume that all our point processes are defined over $\mR^d$ for  $d \geq 2$. Denoting Betti numbers of a set $A \subset \mR^d$ by $\beta_k(A)$, $k=1,\dots,d-1$, we are interested in $\beta_k(\cCB(\Phi,r))$ for point processes $\Phi \subset \mR^d$.   Since  the Betti numbers for $k\geq d$ are identically zero, these values of $k$ are uninteresting. On the other hand, $\beta_0(A)$ gives the number of connected components of $A$. While this is clearly interesting and important in our setting, it has already been studied in detail from the point of view of random graph theory, as described above.  Indeed,  (sometimes stronger)  versions of virtually all our results for the higher Betti numbers already exist  for $\beta_0$ (cf.\ \cite{Akcoglu81,Penrose03}), and so this case will appear only peripherally  in what follows.
	
	Here is a  summary of our results,  grouped according to the underlying point processes involved. Formal definitions of technical terms are  postponed to later sections.

	\begin{enumerate}
		\item {\it General stationary point processes:} For a stationary point process $\Phi$ and $r \in (0,\infty)$, we study the asymptotics of $\beta_k(\cCB(\Phi \cap W_l,r))$ as $l \to \infty$ and where $W_l = [-\frac{l}{2},\frac{l}{2})^d$. We show convergence of expectations (Lemma \ref{lem:convergence_exp}) and, assuming ergodicity, we prove strong laws
		(Theorem \ref{thm:strong_law}) for all the Betti numbers and a concentration inequality for $\beta_0$ (Theorem
		\ref{thm:conc_ineq_b0_DPP}) in the special case of determinantal point processes.
		
		\item {\it Stationary Poisson point processes:} Retain the same notation as above, but take $\Phi = \cP$, a stationary Poisson point process on $\mR^d$. In this setting we prove a central limit theorem (Theorem \ref{thm:clt_Poisson_Binomial}) for the Betti numbers of  $\cCB(\cP\cap W_l, r)$  and  $\cC(\cP\cap W_l,r)$, for any $r\in(0,\infty)$,  as $l\to\infty$.  We also treat the case in which $l$  points are chosen uniformly  in $W_l$ and obtain a similar result, although in this case  we  can only prove the central limit theorem for $r \notin I_d$, where the interval $I_d$ will be defined in Section \ref{sec:clt}. Informally, $I_d$ is the interval of radii where both $\cCB(\cP,r)$ and its complement have unbounded components a.s.. We only remark here that $I_2 = \emptyset$ and $I_d$ is a non-degenerate interval for $d \geq 3$.
		
		\item {\it Inhomogeneous Poisson and binomial point processes:}  Now, consider either the Poisson point process $\cP_n$  with
		non-constant intensity function $nf$, for a `nice', compactly supported,  density $f$, or the binomial   process of $n$ iid random variables with probability density  $f$.   In this case the basic  set-up requires a slight modification, and so we consider asymptotics for $\beta_k(\cCB(\cP_n,r_n))$ as $n \to \infty$ and $nr_n^d \to r \in (0,\infty)$. We derive an upper bound for variances and a weak law (Lemma \ref{lem:weak_law_Poisson}). In the Poisson case, we also derive a variance lower bound for the top homology. For the corresponding binomial  case we prove a concentration inequality (Theorem \ref{thm:conc_ineq_Binomial}) and use this to prove a strong law for both cases (Theorem \ref{thm:strong_law_Poisson}).
	\end{enumerate}
	%
	
	
	A few words on our proofs: In the case of stationary point processes, we shall use the nearly-additive properties of  Betti numbers along with sub-additive theory arguments (\cite{Yukich98,Steele97}). In the Poisson and binomial  cases, the proofs center around an analysis of the so-called  add-one cost function,
	\beqq
	\beta_k(\cCB(\cP \cup \{O\},r)) - \beta_k(\cCB(\cP,r)),
	\eeqq
	where $O$ is the origin in $\mR^d$.
	While simple combinatorial topology bounds with martingale techniques suffice for strong laws, weak laws, and concentration inequalities, a more careful analysis via the Mayer-Vietoris sequence is required for  the central limit theorems.
	
	Our central limit theorems rely  on similar results for stabilizing Poisson functionals (cf.\ \cite{Penrose01}),  which in turn  were based upon martingale central limit theory.  As for  variance  bounds, while upper bounds can be derived via Poincar\'{e} or Efron-Stein inequalities, the more involved lower bounds exploit the recent bounds developed  in \cite{Last14} using chaos expansions of Poisson functionals.
	
	One of the difficulties in analyzing Betti numbers that will become obvious in the proof of the central limit theorem is their global nature. Most known examples of stochastic geometric functionals satisfy both the notions of stabilization (cf.\ \cite{Penrose01}) known as `weak' and `strong' stabilization. However, we shall prove that higher Betti numbers satisfy weak stabilization but satisfy strong stabilization only for certain radii regimes. We are unable to prove strong stabilization of higher Betti numbers for all radii regimes because of the global dependence of Betti numbers on the underlying point process.

	\subsection{Some history}
	\label{sec:related_work}
	
	To put our results into perspective, and to provide some motivation, here is a little history.
	
	As already mentioned,  recent interest in random geometric complexes was stimulated by their connections to topological data analysis and, more broadly, applied topology. There are a number of accessible surveys on this subject (e.g.\ \cite{Carlsson09,Costa12,Edelsbrunner10,Ghrist08,Zomorodian09}), all of which share a common theme of studying  topological invariants of simplicial complexes built on point sets. At the time of writing, another excellent review \cite{Carlsson14} by Carlsson appeared, which is longer than the earlier ones, more up to date, and which also contains a gentle introduction the topological concepts needed in the current paper.   Throughout this literature, Betti numbers, apart from being a simple topological invariant,  appear as the first step to understanding persistent homology, undoubtedly the single most important tool to emerge from current research in applied topology.
	
	Although  the study of random geometric complexes seems to have originated in \cite{Kahle11}, it is worth noting that Betti numbers of a random complex model were already investigated in \cite{Linial06},  where a higher-dimensional version of the  Erd\"{o}s-Renyi random graph model was constructed. The recent paper \cite{Kahle13} gives a useful survey of what is known about the topology of these models, which are rather different to those in the current paper.
	
	As we already noted above, the Boolean model \eqref{defn:boolean}   has long been studied in stochastic geometry, mainly through  its volumetric measures. However, one of these measures is the Euler characteristic,  $\chi$, which is also one of the basic homotopy invariants of topology,  and is given by an alternating sum of Betti numbers.
	Results for Euler characteristics which are related to ours for the individual Betti numbers can be found, for example, in \cite{Schneider08}, which establishes ergodic theorems for
	$\chi(\cCB(\Phi,r))$ when the underlying  point process $\Phi$ is itself ergodic. More recently, a slew of results have been established for  $\chi(\cCB(\cP,r))$ (i.e.\  the Poisson case)
	in the   preprint \cite{Hug13}. The arguments in this paper replace more classic integral geometric arguments, and are based on new results in the  Malliavin-Stein  approach to limit theory (cf.\  \cite{Nourdin12} and esp.\ \cite{Schulte13}). To some extent we shall also exploit these methods in the current paper, although they are not as well suited to the study of Betti numbers as they are to the Euler characteristic, due to the non-additivity of the former.
	
	An alternative approach to the Euler characteristic of a simplicial complex is via an alternating sum of the numbers of faces of different dimensions. This fact has been used to good effect in  \cite{Decreusefond11}, which derives exact expressions for moments of face counts,  and a central limit theorem and  concentration inequality for the Euler characteristic and
	$\beta_0$ when the underlying space is a torus. (Working on a torus rather avoids otherwise problematic boundary issues which complicate moment calculations.) Some additional results on phase transitions in face counts for a wide variety of  underlying stationary point processes can be found in \cite[Section 3]{Adler12}.

	\subsection{Beyond the \Cech complex}
	Although this paper concentrates on the \Cech complex as the basic topological object determined by a point process, this is but one of the many geometric complexes that could have been chosen. There are various other natural choices  including the  Vietoris-Rips, alpha, witness, cubical, and discrete Morse complexes (cf.\ \cite[Section 7]{Forman02},  \cite[Section 3]{Zomorodian12}) that  are also of  interest.  In particular, the alpha complex is homotopy equivalent to the \Cech complex (\cite[Section 3.2]{Zomorodian12}), as is an appropriate discrete Morse complex (\cite[Theorem 2.5]{Forman02}). This immediately implies that all the limit theorems for Betti numbers in this paper also hold for these complexes.
	
	Moreover, since our main topological tools  -- Lemmas \ref{lem:MV_complex_bounds} and \ref{lem:MV_complexes} --  can be shown to hold for all the complexes listed above, most of our arguments should easily extend to obtain similar theorems for these cases as well.
	
	\section{Preliminaries}
	\label{sec:prelims}
	\sec

	This section introduces a handful of basic concepts  and definitions from algebraic  topology  and the theory of point processes. The aim is  not to make the paper self-contained, which would be impossible, but to allow readers from these two areas to  have at least
	the vocabulary for reading our results. We refer readers to the standard texts  such as \cite{Hatcher02,Munkres84} for more details on the topology we need,
	while  \cite{Schneider08,Stoyan95} covers the point process material.
	
	\subsection{Topological Preliminaries}
	\label{sec:topology}
	
	An {\it abstract simplicial complex}, or simply  {\it complex},
	$\cK$ is a finite collection of finite sets such that $\sigma_1 \in \cK$ and $\sigma_2 \subset \sigma_1$ implies  $\sigma_2 \in \cK$. The sets in $\cK$ are called faces or simplices and the dimension $\text{dim}(\sigma)$ of any simplex $\sigma \in \cK$ is the cardinality of $\sigma$ minus $1$. If $\sigma \in \cK$ has dimension $k$, we say that $\sigma $ is a $k$-simplex of $\cK$.
	The $k$-skeleton of $\cK$, denoted by $\cK^k$, is the complex formed by all faces of $\cK$ with dimension at most $k$.
	
	Note that a singleton containing a simplex of dimension greater than zero is not necessarily a simplicial complex.  (This is as opposed to their usual concrete representations as subsets  of Eulcidean space, in which a closed simplex physically contains all its lower dimensional faces.) When we want to study the complex generated by a simplex $\sigma$, we shall refer to it as the {\it full simplex} $\sigma$, or, equivalently, $\sigma^k$, its $k$-skeleton, where $k =$ dim$(\sigma)$.
	
	A map $g\: \cK^0 \to \cL^0$ between two complexes $\cK$ and $\cL$ is said to be a {\em simplicial map} if, for any $m \geq 0$, $\{g(v_1),\ldots,g(v_m)\}\in\cal L$
	whenever $\{v_1,\ldots,v_m\}\in \cal K$.
	
	Given a point set in $\mR^d$ (or generally, in a metric space) there are various ways to define a simplicial complex that captures some of the geometry and topology related to the set. We shall be concerned with a specific construction -- the so-called \Cech complex.
	\begin{definition}
		\label{defn:cech}
		Let $\cX=\{x_i\}_{i=1}^n\subset\mR^d$ be a finite set of points. For any $r>0$, the \Cech complex of radius $r$
		is the abstract simplicial complex
		$$\C (\cX ,r) \triangleq \Big\{ \sigma\subset\cX\: \bigcap_{x\in\sigma} B_x(r)\neq\emptyset \Big\},$$
		where $B_x(r)$ denotes the ball of radius $r$ centered at $x$.
	\end{definition}
	For future reference, note that by the nerve theorem (cf.\ \cite[Theorem 10.7]{Bjorner95}) the \Cech complex built over a finite set of points is homotopic to the Boolean model (\ref{defn:boolean}) constructed over the same set.
	
	The \Cech complexes that we shall treat  will be be generated by random point sets,  and we shall be interested in their homology groups $H_k$, with coefficients from a field $\mathbb{F}$, which will be anonymous but fixed throughout the paper.   A common choice is
	to take $\mathbb{F}=\mathbb{Z}_2$, which is computationally convenient, but this will not be necessary here.
	
	A few words are in place for the reader unfamiliar with homology theory. On the heuristic level, the homology groups of a space are meant to capture the topological structure of cycles or `holes' in it. Of course, such concepts are best understood in a geometric setting,
	e.g.\ when the space is a subset in $\mathbb{R}^{d}$, or an abstract complex generated by a triangulation of such a subset. Nevertheless, high dimensional cycles can be defined combinatorially, much like $1$-dimensional ones are defined for graphs. Besides simply defining the cycles, one wishes to be able to ignore trivial cycles or ones that are equivalent to others. As concrete examples, the boundary of a full disc should not be regarded as a `hole' and the two cycles forming the boundary of a hollow cylinder are to be thought of as representatives of the same `hole' (where, to obtain an abstract complex, one should consider triangulations of these objects, or alternatively work with singular homology). The way the theory deals with these two issues is by defining $H_{k}$
	as the \emph{quotient} of a group $Z_{k}$ representing cycles by another group $B_{k}$  representing boundaries. Then, trivial cycles are exactly those in the class of $0$
	and equivalent ones belong to the same class. The groups $Z_{k}$
	and $B_{k}$
	are subgroups of the free group generated by (oriented) simplices; i.e.\ their elements are formal sums of simplices with coefficients taken from some field. In general, the coefficients are from an Abelian group but we shall work with field coefficients. Having made the choice of working with field coefficients, all groups in our case are vector spaces. The dimension of $H_{k}$, denoted by $\beta_k$,
	is called the $k$-th Betti number and has a special meaning: it is the maximal number of \emph{non-equivalent} cycles of dimension $k$. It is important to note that, for $k=0$, $\beta_0$ is the maximal number of vertices which (pairwise) cannot be connected by a sequence of $1$-simplices; that is, $\beta_0$ is the number of connected components of the space. Throughout the paper, we shall concentrate on the (random) Betti numbers $\beta_k$, $0\leq k\leq d-1$, of \Cech complexes.
	%
	%
	
	
	Our two main topological tools are collected in the following two lemmas. The first  is needed for obtaining various moment bounds on Betti numbers of random simplicial complexes, and the second will replace the role that additivity of functionals usually plays in most probabilistic limit theorems. Because the arguments underlying these lemmas  are important for what follows, and will be unfamiliar to most probabilistic readers, we shall  prove them both.  However both  contain results  that are well known to topologists.

	\begin{lemma}
		\label{lem:MV_complex_bounds}
		Let $\cK,\cK_1$ be two finite simplicial complexes such that $\cK \subset \cK_1$ (i.e., every simplex in $\cK$ is also a simplex in $\cK_1$). Then, for every $k \geq 1$, we have that
		\[ \big|\beta_k(\cK_1) - \beta_k(\cK)\big| \ \leq \  \sum_{j=k}^{k+1} \mbox{$\# \big\{ j${\rm -simplices in }$\cK_1\setminus\cK\big\}$}. \]
	\end{lemma}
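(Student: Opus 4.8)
The plan is to build $\cK_1$ up from $\cK$ one simplex at a time and to control the jump in $\beta_k$ at each addition. Because $\cK_1$ is a simplicial complex, every proper face of a simplex of $\cK_1\setminus\cK$ lies in $\cK_1$; listing the simplices of $\cK_1\setminus\cK$ in order of non-decreasing dimension therefore produces a filtration
\[
\cK=\cK^{(0)}\subset\cK^{(1)}\subset\cdots\subset\cK^{(m)}=\cK_1,\qquad m=\#(\cK_1\setminus\cK),
\]
in which each $\cK^{(i)}$ is again a simplicial complex and is obtained from $\cK^{(i-1)}$ by adjoining a single simplex all of whose proper faces are already present. It then suffices to quantify the effect on $\beta_k$ of a single such addition.

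The heart of the matter is this one-simplex bookkeeping, which I would carry out by rank--nullity. Write $\partial_j\colon C_j\to C_{j-1}$ for the boundary maps and recall that $\beta_k=\dim\ker\partial_k-\dim\im\partial_{k+1}$ (the dimension of the cycle group $Z_k=\ker\partial_k$ minus that of the boundary group $B_k=\im\partial_{k+1}$). Adjoining a single $j$-simplex $\sigma$ enlarges $C_j$ by the one new generator $\sigma$ and leaves all other chain groups intact, so the only boundary operator whose image or kernel can change is $\partial_j$. There are two cases. If $\partial\sigma\notin\im\partial_j$ before the addition, then $\dim\im\partial_j$ goes up by one while $\dim\ker\partial_j$ is unchanged, so $\beta_{j-1}$ drops by one and no other Betti number moves. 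If $\partial\sigma\in\im\partial_j$, then $\dim\im\partial_j$ is unchanged while $\dim\ker\partial_j$ goes up by one, so $\beta_j$ rises by one and no other Betti number moves. In either case the addition of a single $j$-simplex changes $\beta_k$ by at most one, and leaves $\beta_k$ untouched unless $j\in\{k,k+1\}$.

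Telescoping along the filtration and applying the triangle inequality now gives
\[
\big|\beta_k(\cK_1)-\beta_k(\cK)\big|=\Big|\sum_{i=1}^{m}\big(\beta_k(\cK^{(i)})-\beta_k(\cK^{(i-1)})\big)\Big|\le\sum_{i=1}^{m}\big|\beta_k(\cK^{(i)})-\beta_k(\cK^{(i-1)})\big|.
\]
By the previous paragraph each summand is at most $1$ and vanishes unless the simplex adjoined at step $i$ has dimension $k$ or $k+1$. Hence the right-hand side counts at most the $k$- and $(k+1)$-simplices of $\cK_1\setminus\cK$, i.e.\ it is bounded by $\sum_{j=k}^{k+1}\#\{j\text{-simplices in }\cK_1\setminus\cK\}$, which is the assertion.

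The only genuine content is the one-simplex computation; the ordering and the telescoping are routine, and this is where I expect the bulk of the care to go (keeping straight which of $\beta_{j-1}$ and $\beta_j$ moves in each case, and that nothing else does). For readers who prefer homological algebra, the same bound follows from the long exact sequence of the pair $(\cK_1,\cK)$, which yields $\beta_k(\cK_1)-\beta_k(\cK)\le\dim H_k(\cK_1,\cK)$ and $\beta_k(\cK)-\beta_k(\cK_1)\le\dim H_{k+1}(\cK_1,\cK)$; since the relative chain group $C_j(\cK_1,\cK)$ is freely generated by the $j$-simplices of $\cK_1\setminus\cK$, one has $\dim H_j(\cK_1,\cK)\le\#\{j\text{-simplices in }\cK_1\setminus\cK\}$, and combining reproduces the estimate. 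I would present the incremental argument, as it is more elementary and makes transparent why precisely the two dimensions $k$ and $k+1$ enter.
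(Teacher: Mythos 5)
Your proof is correct and follows essentially the same route as the paper's: order the simplices of $\cK_1\setminus\cK$ by non-decreasing dimension, bound the change in $\beta_k$ caused by adjoining a single $j$-simplex (nonzero only for $j\in\{k,k+1\}$, and then of magnitude at most one), and telescope with the triangle inequality. The only difference is that you justify the one-simplex step explicitly via rank--nullity, a detail the paper simply asserts, so your write-up is a fleshed-out version of the same argument.
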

	\begin{proof} We start with the simple case when $\cK_1 = \cK \bigcup \{\sigma\}$ where $\sigma$ is a $j$-simplex for some $j \geq 0$.  Note
		that since both  $\cK$ and  $\cK_1$ are simplicial complexes it follows that  all the proper subsets of $\sigma$ must already be
		present in $\cK$. Thus we immediately have that
		\begin{equation*}
		\beta_k(\cK_1) - \beta_k(\cK) \ \in \
		\begin{cases}
		\{0\} & \text{$j \neq k,k+1$,} \\
		\{0,1\} & \text{$j = k$,} \\
		\{-1,0\} & \text{$j = k+1$.}
		\end{cases}
		\end{equation*}
		Thus the lemma is proven for the case $\cK_1 = \cK \bigcup \{\sigma\}$.  For arbitrary complexes $\cK \subset \cK_1$, enumerate the simplices in $\cK_1\setminus\cK$ such that lower dimensional simplices are added before the higher dimensional ones and repeatedly apply the above argument along with the triangle inequality.
	\end{proof}
	
	With a little more work, one can go further than the previous lemma and derive an explicit equality for differences of Betti numbers. This is again a classical result in algebraic topology which is derived using the Mayer-Vietoris sequence (see \cite[Corollary 2.2]{Delfinado93}). However we shall state it here as it is important for our proof of the central limit theorem.
	
	A little notation is needed before we state the lemma. A sequence of Abelian groups $G_1,\ldots,G_l$ and homomorphisms $\eta_i: G_i \to G_{i+1}$,  $i=1,\ldots,l-1$ is said to be {\em exact} if $\im \,\eta_i = \ker \, \eta_{i+1}$ for all $i = 1,\ldots,l-1$. If $l = 5$ and $G_1$ and $G_5$ are trivial, then  the sequence is called {\em short exact}.
	\begin{lemma}[Mayer-Vietoris Sequence]
		\label{lem:MV_complexes}
		Let $\cK_1$ and $\cK_2$ be two finite simplicial complexes and $\cL = \cK_1 \cap \cK_2$ (i.e., $\cL$ is the complex formed from all the simplices in both $\cK_1$ and $\cK_2$). Then the following are true:
		\begin{enumerate}
			\item The following is an exact sequence, and, furthermore, the homomorphisms $\lambda_k$ are  induced by inclusions:
			\begin{align*}
			& \cdots\to H_k(\cL) \stackrel{\lambda_k}{\to} H_k(\cK_1)\oplus H_k(\cK_2) \to H_k(\cK_1 \cup \cK_2)
			\\ &\qquad\qquad\qquad \qquad\qquad\qquad
			\to H_{k-1}(\cL) \stackrel{\lambda_{k-1}}{\to} H_{k-1}(\cK_1)\oplus H_{k-1}(\cK_2)\to \cdots
			\end{align*}

			\item Furthermore,
			\[ \beta_k(\cK_1 \bigcup \cK_2) \ = \ \beta_k(\cK_1) + \beta_k(\cK_2)+ \beta(N_k) +\beta(N_{k-1})-\beta_k(\cL), \]
			where $\beta(G)$ denotes the rank of a vector space $G$ and $N_j = \ker \, \lambda_j$.
			
		\end{enumerate}
	\end{lemma}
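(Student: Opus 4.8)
The plan is to obtain both parts from the standard short exact sequence of chain complexes associated to the triple $(\cL,\cK_1,\cK_2)$, followed by a rank count. Write $C_k(\cdot)$ for the space of $k$-chains with coefficients in $\mathbb{F}$. First I would check that, for each $k$, the sequence
\[ 0 \to C_k(\cL) \xrightarrow{\ \alpha_k\ } C_k(\cK_1)\oplus C_k(\cK_2) \xrightarrow{\ \gamma_k\ } C_k(\cK_1\cup\cK_2) \to 0, \]
with $\alpha_k(c)=(c,-c)$ (a chain of $\cL$ viewed in each $\cK_i$ by inclusion) and $\gamma_k(a,b)=a+b$, is short exact. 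Injectivity of $\alpha_k$ is immediate, and the only step with genuine content is the combinatorial fact $\cL=\cK_1\cap\cK_2$: every $k$-simplex of $\cK_1\cup\cK_2$ lies in $\cK_1$ or in $\cK_2$, which gives surjectivity of $\gamma_k$, while it lies in both exactly when it lies in $\cL$, which gives $\ker\gamma_k=\im\alpha_k$. Since inclusions commute with the boundary operator, $\alpha_\bullet$ and $\gamma_\bullet$ are chain maps, so this is a short exact sequence of chain complexes.

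I would then invoke the zig-zag (snake) lemma, which promotes any short exact sequence of chain complexes to a long exact sequence in homology. This yields exactly the displayed sequence, in which $\lambda_k$ is the map induced by $\alpha_k$, hence by the inclusions of $\cL$ into $\cK_1$ and $\cK_2$ (with the conventional sign on the second factor), as asserted; the unlabelled arrow is induced by $\gamma_k$, and the map $H_k(\cK_1\cup\cK_2)\to H_{k-1}(\cL)$ is the connecting homomorphism supplied by the snake lemma. This establishes part (1).

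For part (2) I would chase ranks through the five-term window
\[ H_k(\cL) \xrightarrow{\lambda_k} H_k(\cK_1)\oplus H_k(\cK_2) \xrightarrow{\mu_k} H_k(\cK_1\cup\cK_2) \xrightarrow{\partial_k} H_{k-1}(\cL) \xrightarrow{\lambda_{k-1}} H_{k-1}(\cK_1)\oplus H_{k-1}(\cK_2), \]
writing $\mu_k$ for the map induced by $\gamma_k$ and $\partial_k$ for the connecting homomorphism, and applying rank-nullity together with exactness at each vertex. Rank-nullity at $H_k(\cL)$ gives $\dim\im\lambda_k=\beta_k(\cL)-\beta(N_k)$; exactness at $H_k(\cK_1)\oplus H_k(\cK_2)$ then yields $\dim\ker\mu_k=\dim\im\lambda_k$, so $\dim\im\mu_k=\beta_k(\cK_1)+\beta_k(\cK_2)-\beta_k(\cL)+\beta(N_k)$. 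Exactness at $H_k(\cK_1\cup\cK_2)$ identifies this with $\dim\ker\partial_k$, while exactness at $H_{k-1}(\cL)$ gives $\dim\im\partial_k=\dim\ker\lambda_{k-1}=\beta(N_{k-1})$. A final application of rank-nullity, $\beta_k(\cK_1\cup\cK_2)=\dim\ker\partial_k+\dim\im\partial_k$, assembles these into the claimed formula.

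Since Mayer-Vietoris is classical, I do not expect a deep obstacle; the real content sits in the chain-level short exactness, and its only nontrivial ingredient is surjectivity of $\gamma_k$. In the topological category this is precisely where one needs excision or barycentric subdivision, whereas in the simplicial category it is automatic from $\cL=\cK_1\cap\cK_2$, which is why working with abstract complexes streamlines the argument. The remaining difficulty is purely bookkeeping: keeping the indices on $N_k$ and $N_{k-1}$ straight so that the rank count reproduces the stated equality rather than an off-by-one variant.
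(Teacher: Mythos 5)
Your proof is correct and follows essentially the same route as the paper: establish the long exact Mayer--Vietoris sequence for part (1), then obtain the Betti-number identity in part (2) by a purely linear-algebraic rank count using exactness. The only differences are cosmetic: the paper cites the simplicial Mayer--Vietoris theorem (Munkres, Theorem 25.1) where you reprove it via the chain-level short exact sequence and the snake lemma (which is the standard proof of that cited result), and your five-term rank chase is equivalent to the paper's extraction of the short exact sequence $0 \to \coker\,\lambda_k \to H_k(\cK_1\cup\cK_2) \to \ker\,\lambda_{k-1} \to 0$.
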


	\begin{proof} The first part of the lemma is just a simplicial version of the classical Mayer-Vietoris theorem (cf.\   \cite[Theorem 25.1]{ Munkres84}). The second part follows from the first part, as follows: Suppose we have the  exact sequence
		\[ \cdots \to G_1 \stackrel{\eta_1}{\to} G_2 \stackrel{\eta_2}{\to} G_3 \stackrel{\eta_3}{\to} G_4 \stackrel{\eta_4}{\to} G_5 \to \cdots \]
		Then we also have the  short exact sequence
		\[ 0 \to \coker \, \eta_1 \to G_3 \to \ker \, \eta_4 \to 0, \]
		where the quotient space  $\coker \, \eta_1 = G_2 / \im \, \eta_1$ is the cokernel  of $\eta_1$ . From the exactness of the sequence we have that
		\[ \beta(G_3) = \beta(\coker \, \eta_1) + \beta(\ker \, \eta_4). \]
		Now applying this to the Mayer-Vietoris sequence with $G_1 = H_k(\cL)$, etc, we have
		\begin{eqnarray*}
			\beta_k(\cK_1 \bigcup \cK_2) & = & \beta(\coker \, \lam_k) + \beta(\ker \, \lam_{k-1}) \\
			& = & \beta_k(\cK_1) + \beta_k(\cK_2) - \beta(\im \, \lam_k) + \beta(N_{k-1}) \\
			& = & \beta_k(\cK_1) + \beta_k(\cK_2)+ \beta(N_k) +\beta(N_{k-1})-\beta_k(\cL),
		\end{eqnarray*}
		which completes the proof.
	\end{proof}
	
	\subsection{Point Process Preliminaries}
	\label{sec:point_process}
	
	A point process $\Phi$ is formally defined to be a random, locally-finite (Radon), counting measure on $\mR^d$.
	More formally, let $\mathcal{B}_{b}$ be   the  $\sigma$-ring of bounded, Borel subsets of $\mR^d$ and let $\mM$
	be the corresponding space of non-negative Radon counting measures. The Borel $\sigma$-algebra
	$\cal M$ is generated by the mappings $\mu\to\mu(B)$  for all   $B\in \mathcal{B}_{b}$. A point process    $\Phi$ is a
	random element in $(\mM, \cal M)$, i.e.\ a measurable map from a probability space
	$(\Omega,\cal F,\mP)$
	to $(\mM,\cal M)$. The distribution of   $\Phi$ is the measure $\mP\Phi^{-1}$ on $(\mM,\cal M)$.

	We shall
	typically identify $\Phi$ with the positions $\{x_1,x_2,\dots\}$ of its atoms, and so for Borel $B\subset\mR^d$, we shall allow ourselves to write
	\beqq
	\Phi(B) \ = \ \sum _i\delta_{x_i}(B) \ = \ \#\{i\:x_i\in B\} \ = \ \#\{\Phi\cap B\},
	\eeqq
	where $\#$ denotes cardinality and $\delta_x$ the single atom measure with mass one at $x$.
	The intensity measure of $\Phi$ is the non-random measure defined by $\mu(B)=\EXP{\Phi(B)}$, and, when $\mu$ is absolutely continuous with respect to Lebesgue measure, the corresponding density is  called the intensity of $\Phi$. A point process is called {\em simple} if its points (i.e., $x_i$'s) are a.s.\ distinct. In this article, we shall consider only simple point processes.
	
	For a measure $\phi \in \mM$, let $\phi_{(x)}$ be the translate measure given by
	$\phi_{(x)}(B) = \phi(B-x)$ for $x \in \mR^d$ and $B\in \mathcal{B}_{b}$. A point process is said to be \emph{stationary} if the distribution of $\Phi_{(x)}$ is invariant under such translation, i.e.\
	$\P \Phi^{-1}_{(x)} = \P \Phi^{-1}$ for all $x \in \mR^d$.
	For a stationary point process in $\mR^d$, $\mu(B) = \lam |B|$ for all $B\in \mathcal{B}_b$, where $|B|$ denotes the Lebesgue measure of $B$, and the constant
	of proportionality  $\lam$ is called the {\em intensity} of the point process.
	
	Of particular importance to us are the Poisson and Binomial point processes. These processes are characterized through their relation to one of the most fundamental notions of probability theory - statistical independence. A Poisson process $\cP$ is the simple point process uniquely determined by its intensity measure $\mu$ and the following property: for any collection of disjoint measurable sets $\{A_i\}$, $\{\cP(A_i)\}$ are independent random variables. An equivalent, direct definition is given by the finite dimensional distributions,
	\[
	\pr{\cP(A_i)=n_i,i=1,...,k}=\prod^k_{i=1}\pr{P_i=n_i},
	\]
	where $P_i$ are Poisson variables with parameter $\mu(A_i)$ and, again, $A_i$ are assumed to be disjoint. A Binomial point process  $\cX_n$ is a process formed by $n$ i.i.d points $X_1,...,X_n$. It is worth mentioning that conditioning a Poisson process to have exactly $n$ points yields a Binomial process; and conversely, mixing a Binomial process by taking $n$ to be a Poisson variable produces a Poisson process.

	For all of the point processes we consider, we shall be interested in  behavior in the so-called thermodynamic limit. That is, while letting the number of points $n$ increase to infinity, we choose the radii $r_n$ so that the average degree of a point (in the random $1$-skeleton) converges to a constant. (Note, however, that this average depends on the location of the point for inhomogeneous processes). As was described in Section \ref{sec:summary}, this is done either by scaling the space and taking $r$ to be $n$-independent for stationary processes, or by fixing the space, increasing the intensity and decreasing $r_n$ for inhomogeneous processes.

	We conclude  the section with some more definitions. For Borel $A \subset \mR^d$, we write $\Phi_A$ for both the restricted random measure given by $\Phi_A(B):=\Phi(A\cap B)$ (when treating $\Phi$ itself as a measure) and the point set $\Phi\cap A$ (when treating $\Phi$ as a point set).
	To save space, we shall write $\Phi_l$  for $\Phi_{W_l}$, where  $W_l$ is the `window' \ $[-l/2,l/2)^d$, for all $l \geq 0$.

	For a set of measures $ \Theta \in \cal{M}$, let the translate family be $\Theta_x :=
	\{\phi_{(x)} \: \phi \in \Theta \}$. A point process $\Phi$ is said to be {\em ergodic} if
	\beqq
	\pr{\Phi \in {\Theta}} \ \in \ \{0,1\}
	\eeqq
	for all $\Theta \in \mathcal{M}$ for which
	\beqq
	\pr{\Phi \in ({\Theta}\setminus\Theta_x) \,\cup\, ({\Theta}_x\setminus{\Theta})}\  =\   0
	\eeqq
	for all $x \in \mR^d$.

	Finally, we say that \emph{$\Phi$ has all moments} if, for all bounded Borel $B \subset \mR^d$, we have
	\begin{equation}
	\label{eqn:pp_moments}
	\EXP{\left[\Phi(B)\right]^k} < \infty, \quad  \text{for all} \ k \geq 1.
	\end{equation}

	\section{Limit theorems for stationary point processes}
	\label{sec:stationary_pp}
	\sec
	
	This section is concerned with the \Cech complex $\C (\Phi_l ,r)$, where $\Phi$ is a stationary point process on $\mR^d$ with unit intensity and, as above,  $\Phi_l$ is the restriction of $\Phi$ to the window $W_l=[-l/2,l/2)^d$. The radius $r$ is arbitrary but fixed.

	It is natural to expect that, as a consequence of stationarity, letting $l\to\infty$, $l^{-d}\EXP{\beta_k(\C (\Phi_l ,r))}$ will converge to a limit. Furthermore, if we also assume ergodicity for $\Phi$, one expects convergence of $l^{-d} \beta_k(\C (\Phi_l ,r))$ to a random
	limit.  All this would be rather standard fare, and rather easy to prove from general limit theorems, if it were only true that Betti numbers were additive functionals on simplicial complexes, or, alternatively, the Betti numbers of \Cech complexes were additive functionals of the underlying point processes.  Although this is not the case, Betti numbers are `nearly additive',  and a correct quantification of this near additivity is what will be required for our proofs.
	
	\remove{Since this is not the case, we need to proceed by replacing
		Betti numbers by a quantity that {\it is} additive, that is amenable to analysis, and that, at least asymptotically, will be equivalent to Betti numbers.}
	
	As hinted before Lemma \ref{lem:MV_complex_bounds}, the additivity properties of Betti numbers are related to simplicial counts $S_j (\cX,r)$, which, for $j\geq 0$,  denotes the number of $j$-simplices in $\C(\cX,r)$, and $S_j(\cX,r;A)$, which denotes the number of $j$-simplices with at least one vertex in $A$.
	%
	
	Our first results are therefore limit theorems for these quantities.
	
	\begin{lemma}
		\label{lem:upper_bound_simplices}
		Let $\Phi$ be a unit intensity stationary point process on $\mR^d$, possessing all moments. Then, for each $j\geq 0$, there exists a constant $c_j:= c(\cL_{\Phi},j,d,r)$
		such that

		\[ \EXP{S_j(\Phi_{A},r)} \ \leq \ \EXP{S_j(\Phi,r;A)}\  \leq \ c_j |A|. \]
	\end{lemma}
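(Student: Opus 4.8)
The plan is to treat the two inequalities separately: the left-hand one is essentially definitional, while all the work lies in the right-hand bound, which I would approach through the factorial moment measures of $\Phi$.

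For the left inequality, I would observe that every $j$-simplex of $\C(\Phi_A,r)$ has all of its $j+1$ vertices in $\Phi\cap A$, and that the defining condition $\bigcap_{x\in\sigma}B_x(r)\neq\emptyset$ depends only on those vertices. Hence each such simplex is also a $j$-simplex of $\C(\Phi,r)$ and, having all (in particular, at least one) of its vertices in $A$, is counted by $S_j(\Phi,r;A)$. This gives the pointwise bound $S_j(\Phi_A,r)\le S_j(\Phi,r;A)$, and the inequality of expectations follows immediately.

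For the right inequality I would first reduce the ``at least one vertex in $A$'' count to a factorial-moment sum with a single distinguished vertex. Writing $h(x_0,\dots,x_j):=\1\{\bigcap_{l}B_{x_l}(r)\neq\emptyset\}$ and bounding the number of simplices meeting $A$ by the number of (simplex, vertex-in-$A$) incidences, symmetry in the $j+1$ arguments yields
\[
S_j(\Phi,r;A)\ \le\ \frac{1}{j!}\sum^{\neq}_{(x_0,\dots,x_j)\in\Phi}\1\{x_0\in A\}\,h(x_0,\dots,x_j),
\]
where the sum runs over ordered $(j+1)$-tuples of \emph{distinct} points. Taking expectations and invoking the definition of the $(j+1)$-st factorial moment measure $\alpha^{(j+1)}$ of $\Phi$ (finite on bounded sets since $\Phi$ has all moments) turns the right-hand side into $\frac{1}{j!}\int \1\{x_0\in A\}\,h\,d\alpha^{(j+1)}$. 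I stress that $\Phi$ is only assumed stationary, so the Mecke formula is unavailable and the factorial moment measure is the correct replacement. Next I would use stationarity to factor $\alpha^{(j+1)}$ as $\lam\,dx_0$ times a reduced measure $\hat\alpha$ in the increments $(x_1-x_0,\dots,x_j-x_0)$. Since $h$ is translation invariant, the inner integral over the increments is a constant $K:=\int h(O,\mathbf{y})\,\hat\alpha(d\mathbf{y})$, and integrating $\1\{x_0\in A\}$ over $x_0$ produces exactly $|A|$, so that $\EXP{S_j(\Phi,r;A)}\le (\lam K/j!)\,|A|=:c_j|A|$. The geometric observation that $\bigcap_l B_{x_l}(r)\neq\emptyset$ forces $|x_l-x_0|\le 2r$ for all $l$ confines $h$ to a bounded region, giving $K\le\hat\alpha(\{\mathbf{y}:\,|y_l|\le 2r\ \forall l\})$.

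The crux, and the step I expect to be the main obstacle, is verifying that this last reduced mass is finite, i.e.\ translating the scalar hypothesis that $\Phi$ has all moments into finiteness of $\hat\alpha$ on the bounded set $\{|y_l|\le 2r\}$. I would do this by testing the factorization against $\1\{x_0\in B_O(R)\}\prod_l\1\{x_l\in B_O(R)\}$ for a fixed ball of radius $R>2r$: the integral equals $\alpha^{(j+1)}(B_O(R)^{j+1})$, which is the $(j+1)$-st factorial moment of $\Phi(B_O(R))$ and hence at most $\EXP{[\Phi(B_O(R))]^{j+1}}<\infty$ by \eqref{eqn:pp_moments}; while restricting $x_0$ to $B_O(R-2r)$ and the increments to $\{|y_l|\le 2r\}$ bounds the same integral below by $\lam\,|B_O(R-2r)|\cdot\hat\alpha(\{|y_l|\le 2r\ \forall l\})$. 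Finiteness of the factorial moment then forces $\hat\alpha(\{|y_l|\le 2r\})<\infty$, and since $R,r$ and the disintegration depend only on the law of $\Phi$, the resulting constant $c_j$ depends only on $\cL_\Phi$, $j$, $d$ and $r$, as required.
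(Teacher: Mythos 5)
Your proof is correct, but it is packaged differently from the paper's. The paper's argument is cruder and shorter: it bounds $S_j(\Phi,r;A)$ pointwise by $\sum_{x \in \Phi \cap A}\left(\Phi(B_x(2r))\right)^{j}$ (each simplex counted at a vertex in $A$, with the other vertices lying in $B_x(2r)$), and then observes that $A \mapsto \EXP{\sum_{x \in \Phi \cap A}\left(\Phi(B_x(2r))\right)^{j}}$ is a translation-invariant measure (by stationarity) that is finite on compacts (by the all-moments assumption), hence a constant multiple of Lebesgue measure --- and that is the whole proof. You instead run the computation through the $(j+1)$-st factorial moment measure: the incidence count with a distinguished vertex and the $1/j!$ symmetrization, Campbell's formula, the disintegration $\alpha^{(j+1)} = \lam\, dx_0 \otimes \hat\alpha$ under stationarity, and then a separate sandwich argument to extract finiteness of $\hat\alpha$ on the bounded set $\{|y_l| \le 2r\ \forall l\}$ from the scalar moment hypothesis. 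Both proofs ultimately rest on the same two pillars (stationarity forces proportionality to Lebesgue measure; all moments force local finiteness), but the paper applies the Haar-uniqueness principle directly to a cleverly chosen expectation measure, which makes the local finiteness immediate and avoids any appeal to the existence of reduced moment measures, whereas your route requires that extra machinery and the additional finiteness step. What your version buys in exchange is a sharper and more explicit constant, $c_j = \lam K / j!$ with $K$ the integral of the exact \Cech simplex indicator against $\hat\alpha$, rather than a bound through the crude power $\left(\Phi(B_x(2r))\right)^{j}$; it also spells out the left-hand inequality, which the paper treats as immediate. Incidentally, the exponent $j-1$ and the radius $r$ appearing in the paper's displayed bounds are typos (they should be $j$ and $2r$, as your count confirms), but this does not affect the paper's argument.
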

	
	\begin{proof} We have the following trivial upper bound for simplicial counts:
		\[ S_j(\Phi,r;A)  \ \leq \  \sum_{x \in \Phi \cap A}\left(\Phi(B_x(2r))\right)^{j-1}. \]
		Due to the stationarity of $\Phi$ along with the assumption that it has all moments, we have that the measure
		\[  \mu_0(A)\ := \ \EXP{\sum_{x \in \Phi \cap A}(\Phi(B_x(r))^{j-1}}\]
		is translation invariant and finite on compact sets.
		Thus $\mu_0(A) = c_j |A|$ for some $c_j\in (0,\infty)$,  and we are done.
	\end{proof}
	
	%
	%
	
	%
	\begin{lemma}
		\label{lem:strong_law_simplices}
		Let $\Phi$ be a unit intensity, ergodic, point process  on $\mR^d$ possessing all moments. Then, for each
		$j\geq 0$,  there exists a constant,  $\widehat{S}_j:=\widehat S (\cL_{\Phi},j,d,r)$, such that, with probability one,
		\[\lim_{l \to \infty}\frac{S_j(\Phi,r;W_l)}{l^d}
		\ =\  \lim_{l \to \infty}\frac{S_j(\Phi_l,r)}{l^d} \  =\  \widehat{S}_j(\cL_{\Phi},r).  \]
	\end{lemma}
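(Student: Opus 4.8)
The plan is to replace the subadditive count $S_j(\Phi,r;W_l)$ by a genuinely \emph{additive}, translation-covariant functional, apply a spatial ergodic theorem to the latter, and then absorb the boundary discrepancies. For a Borel set $A$ I would set
\[ \widetilde{S}_j(\Phi,r;A) \ := \ \sum_{\sigma} \frac{\#(\sigma\cap A)}{j+1}, \]
where the sum runs over all $j$-simplices $\sigma$ of $\C(\Phi,r)$ and $\#(\sigma\cap A)$ is the number of vertices of $\sigma$ lying in $A$. Unlike $S_j(\Phi,r;\cdot)$, this functional is additive, $\widetilde{S}_j(\Phi,r;A_1\cup A_2)=\widetilde{S}_j(\Phi,r;A_1)+\widetilde{S}_j(\Phi,r;A_2)$ for disjoint $A_1,A_2$, and it is translation-covariant under the shift acting on $\Phi$. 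Weighting each simplex according to whether all, some, or none of its vertices lie in $W_l$ gives the sandwich
\[ S_j(\Phi_l,r) \ \leq \ \widetilde{S}_j(\Phi,r;W_l) \ \leq \ S_j(\Phi,r;W_l), \]
since a simplex fully inside $W_l$ has weight $1$ in all three expressions, while a simplex meeting but not contained in $W_l$ contributes $1$ to the upper term and a fraction to the middle. It therefore suffices to prove that $\widetilde{S}_j(\Phi,r;W_l)/l^d$ converges a.s.\ to a constant and that the two outer gaps are $o(l^d)$ a.s.

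For the convergence I would appeal to the multiparameter (spatial) ergodic theorem for stationary, ergodic point processes. Being additive and translation-covariant, $A\mapsto\widetilde{S}_j(\Phi,r;A)$ is an integrable stationary set function: its density over the unit cube $Q_0=[0,1)^d$ obeys $\E[\widetilde{S}_j(\Phi,r;Q_0)]\le\E[S_j(\Phi,r;Q_0)]\le c_j<\infty$ by Lemma \ref{lem:upper_bound_simplices}. The ergodic theorem then yields
\[ \lim_{l\to\infty}\frac{\widetilde{S}_j(\Phi,r;W_l)}{l^d} \ = \ \widehat{S}_j \ := \ \E[\widetilde{S}_j(\Phi,r;Q_0)] \qquad\text{a.s.}, \]
where ergodicity of $\Phi$ under the full $\mR^d$-translation group forces the limiting invariant random variable to be the deterministic constant $\widehat{S}_j=\widehat{S}_j(\cL_\Phi,j,d,r)$. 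Equivalently, writing $\zeta_x$ for the number of $j$-simplices of $\C(\Phi,r)$ incident to a vertex $x$, one has $(j+1)\widetilde{S}_j(\Phi,r;W_l)=\sum_{x\in\Phi_l}\zeta_x$, and the statement is the Palm (Campbell) form of the same limit, with $\widehat{S}_j$ expressible through the Palm expectation of $\zeta_0$.

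It remains to control the boundary. Any $j$-simplex contributing to $S_j(\Phi,r;W_l)-\widetilde{S}_j(\Phi,r;W_l)$ or to $\widetilde{S}_j(\Phi,r;W_l)-S_j(\Phi_l,r)$ must straddle $\partial W_l$; since its vertices are pairwise within $2r$ (the \Cech condition forces a common intersection point, hence pairwise distances at most $2r$), all of its vertices lie in a thin shell $V_l:=W_{l+c}\setminus W_{l-c}$ for a suitable constant $c=c(r)$. Using additivity of $\widetilde{S}_j$, both gaps are bounded by $\widetilde{S}_j(\Phi,r;V_l)=\widetilde{S}_j(\Phi,r;W_{l+c})-\widetilde{S}_j(\Phi,r;W_{l-c})$. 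Dividing by $l^d$ and using $(l\pm c)^d/l^d\to1$, each term tends a.s.\ to $\widehat{S}_j$, so their difference tends to $0$. This forces all three quantities in the sandwich to share the limit $\widehat{S}_j$, completing the argument.

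The step I expect to demand the most care is the invocation of the ergodic theorem: one must choose a version valid for additive, translation-covariant functionals of a point process (rather than for a bounded $L^1$ function integrated over a continuous action), verify the Følner/regularity of the boxes $\{W_l\}$, and — the genuinely delicate point — argue that ergodicity with respect to the continuous group $\mR^d$ (as opposed to merely a lattice subgroup $\mZ^d$) guarantees that the a.s.\ limit is the constant $\E[\widetilde{S}_j(\Phi,r;Q_0)]$ rather than a non-degenerate invariant random variable. By contrast the boundary estimate is routine once additivity is in hand, as it reduces the shell contribution to a difference of two box-averages with a common limit.
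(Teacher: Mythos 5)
Your proposal is correct and is essentially the paper's own argument: your additive functional $\widetilde{S}_j(\Phi,r;\cdot)$ is precisely the paper's $h$ (the paper sets $h(\Phi) = \frac{1}{j+1}\sum_{x \in \Phi_{W_1}}\#[\mbox{$j$-simplices containing $x$}]$, so that $h(\Phi - z) = \widetilde{S}_j(\Phi,r;z+W_1)$), and the paper likewise sandwiches $S_j(\Phi_l,r)$ between sums of these fractional counts and absorbs the $O(l^{d-1})$-shell boundary discrepancy. The only real difference is that the paper sidesteps the delicate point you flag -- the need for a continuous-parameter, set-indexed ergodic theorem -- by summing $h$ over $\mZ^d$-translates and invoking the discrete multiparameter ergodic theorem (\cite[Proposition 2.2]{Meester96}), which is stated for point processes ergodic under continuous translations and therefore already yields a deterministic limit.
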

	\begin{proof}  Define the function  
		\[ h(\Phi)\ := \ \frac{1}{j+1}  {\sum_{x \in \Phi_{W_1}}\mbox{\#[$j$-simplices in $\C(\Phi,r)$ containing $x$]}}.\]
		Recalling that by $\Phi-z$ we mean the points of $\Phi$ moved by $-z$,  it is easy to check that
		%
		\beq
		\label{bounds1}
		\sum_{z \, \in\,  \mZ^d \cap W_{l-2r-1}} h(\Phi - z) \ \leq \
		S_j(\Phi_l,r) \  \leq\    \sum_{z\, \in\,  \mZ^d \cap W_{l+1}} h(\Phi - z) .
		\eeq
		%
		Since $\Phi$ has all moments, we have that
		\[
		\EXP{h(\Phi)} \ \leq \ \EXP{\Phi(W_{1+r})^{j+1}} \  <\  \infty.
		\]
		and so are in position to apply the multivariate ergodic theorem (e.g.\ \cite[Proposition 2.2]{Meester96}) to
		each of  the sums in \eqref{bounds1}. This implies the existence of a constant $\widehat{S}_j(\cL_{\Phi},r) \in [0,\infty)$ such that, with probability one,
		\[ \lim_{l \to \infty} \frac{1}{l^d}\sum_{z \, \in\,  \mZ^d \cap W_{l-2r-1}} h(\Phi - z) \ = \
		\lim_{l \to \infty} \frac{1}{l^d}\sum_{z \, \in\, \mZ^d \cap W_{l+1}} h(\Phi - z)\
		= \    \widehat{S}_j(\cL_{\Phi},r).\]
		This gives the ergodic theorem for $S_j(\Phi_l,r)$. The result for $S_j(\Phi,r;W_l)$ follows from this and  the  bounds
		\[ S_j(\Phi_l,r) \  \leq\    S_j(\Phi_l,r ; W_{l})\  \leq\   S_j(\Phi_{l+2r+1},r).\]
	\end{proof}
	
	\subsection{Strong Law for Betti numbers}
	
	
	In this section we shall start with  a convergence  result for  the expectation of $\beta_k(\C(\Phi_l,r))$  when $\Phi$ is a quite general stationary point process, and then proceed to a strong law. We treat these results separately, since convergence of expectations can be obtained under weaker conditions than the strong law. In addition, seeing the proof for  expectations first should make the proof of strong law easier to follow.
	
	From \cite[Theorem 4.2]{Adler12} we know that
	\beqq
	\EXP{\beta_k(\C(\Phi_l,r))} \ = \ O(l^d).
	\eeqq
	The following lemma strengthens this result.
	\begin{lemma}
		\label{lem:convergence_exp}
		Let $\Phi$ be a unit intensity stationary point process possessing all moments. Then, for each $0\leq k\leq d-1$,  there exists a  constant $\be_k:= \be_k(\cL_{\Phi},r) \in [0,\infty)$ such that
		\[
		\lim_{l\to\infty} \frac{\EXP{\beta_k(\C(\Phi_l,r))}}{l^d}\ =\ \be_k. \]
	\end{lemma}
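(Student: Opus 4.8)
The plan is to exploit the near-additivity of $\beta_k$ over disjoint complexes, with an error that Lemma \ref{lem:MV_complex_bounds} lets us control by a boundary simplex count, and to feed this into a Cauchy argument for $\phi(l) := \EXP{\beta_k(\C(\Phi_l,r))}/l^d$. First I would record that $\phi$ is bounded: since $\beta_k(\cK)\le\#\{k\text{-simplices of }\cK\}$ for any finite complex $\cK$, Lemma \ref{lem:upper_bound_simplices} gives $\EXP{\beta_k(\C(\Phi_l,r))}\le\EXP{S_k(\Phi_l,r)}\le c_k l^d$, so $0\le\phi(l)\le c_k$ for every $l$.

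The core is a commensurability-free near-additivity estimate. Fix reals $0<s\ll l$, tile $\mR^d$ by the lattice $s\mZ^d$, and let $Q_1,\dots,Q_M$ be those lattice cubes entirely inside $W_l$ (so $M=\lfloor l/s\rfloor^d$ and each $Q_i$ is a translate of $W_s$). Put $\cK:=\bigcup_{i=1}^M\C(\Phi_{Q_i},r)\subset\C(\Phi_l,r)$. Since the vertex sets $\Phi_{Q_i}$ are disjoint and each \Cech simplex is determined by its vertices, $\cK$ is a disjoint union of complexes, whence $\beta_k(\cK)=\sum_i\beta_k(\C(\Phi_{Q_i},r))$ and, by stationarity, $\EXP{\beta_k(\cK)}=M\,\EXP{\beta_k(\C(\Phi_s,r))}$. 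Every simplex of $\C(\Phi_l,r)\setminus\cK$ either meets the outer shell $W_l\setminus\bigcup_iQ_i$ (of volume $\le d\,s\,l^{d-1}$) or straddles an internal lattice wall; since a \Cech simplex has diameter at most $2r$, a straddling simplex has a vertex within distance $2r$ of some wall, and the $2r$-neighbourhood of the internal walls inside $W_l$ has volume $\le C_{d,r}\,l^d/s$. Writing $V$ for the union of these two regions, Lemma \ref{lem:MV_complex_bounds} followed by Lemma \ref{lem:upper_bound_simplices} gives
\[
\EXP{\big|\beta_k(\C(\Phi_l,r))-\beta_k(\cK)\big|}\ \le\ \sum_{j=k}^{k+1}\EXP{S_j(\Phi,r;V)}\ \le\ C\,|V|\ \le\ C'\Big(\tfrac{l^d}{s}+s\,l^{d-1}\Big).
\]
Dividing by $l^d$, using $Ms^d/l^d\to1$ as $s/l\to0$ and the boundedness of $\phi$, yields the key comparison, valid for \emph{arbitrary} real scales $0<s\ll l$ with no divisibility constraint:
\[
\big|\phi(l)-\phi(s)\big|\ \le\ C''\Big(\tfrac{1}{s}+\tfrac{s}{l}\Big).
\]

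To conclude, given large $l,l'$ I would set $s:=\min(l,l')^{1/2}$ and apply the comparison to the pairs $(l,s)$ and $(l',s)$; the triangle inequality then gives $|\phi(l)-\phi(l')|\le C''\big(2/s+s/l+s/l'\big)\to0$ as $\min(l,l')\to\infty$. Hence $\phi(l)$ is Cauchy and converges to some $\be_k\in[0,c_k]\subset[0,\infty)$, as required. (The estimate of Lemma \ref{lem:MV_complex_bounds} is stated for $k\ge1$; the case $k=0$ is the classical convergence for the number of connected components.)

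The main obstacle, to my mind, is the near-additivity step itself. One must (i) use the topological input of Lemma \ref{lem:MV_complex_bounds} to replace the \emph{difference} of Betti numbers by a \emph{count} of boundary simplices, which is the only reason a global, non-additive quantity can be handled at all; and (ii) choose the tiling correctly. Tiling by many small cubes of a common side $s$ (rather than by a fixed number of large pieces) is what makes $|\phi(l)-\phi(s)|$ hold for all real $l$ with no commensurability between $l$ and $s$, while the relative error $1/s+s/l$ can be driven to zero by letting $s$ grow sublinearly in $l$ — precisely what is needed to run a Cauchy argument over the continuum of window sizes.
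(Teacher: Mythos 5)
Your proof is correct, and it is built from exactly the same ingredients as the paper's: a tiling of the window by cubes of a smaller side length, additivity of $\beta_k$ over the disjoint union $\bigcup_i \C(\Phi_{Q_i},r)$, Lemma \ref{lem:MV_complex_bounds} to convert the Betti-number discrepancy into a count of simplices near the cube walls, and Lemma \ref{lem:upper_bound_simplices} to bound the expectation of that count by the volume of the $2r$-neighbourhood of the walls. Where you genuinely depart from the paper is in the limit-extraction scheme. The paper uses only the one-sided consequence of this estimate, $\psi(mt)/(mt)^d \geq \psi(t)/t^d - c/t$ at commensurable scales $t$ and $mt$ (with $\psi(l):=\EXP{\beta_k(\C(\Phi_l,r))}$); it defines $\be_k$ as $\limsup_{l} \psi(l)/l^d$, picks a near-optimal $t_0$, and then needs a \emph{second} boundary estimate comparing $W_l$ with $W_{m(l)t_0}$ to pass from multiples of $t_0$ to arbitrary real $l$, concluding $\liminf_l \psi(l)/l^d \geq \be_k - \ep$. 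You instead prove a symmetric two-sided comparison $|\phi(l)-\phi(s)|\leq C(1/s + s/l)$, with $\phi(l)=\psi(l)/l^d$, valid for arbitrary incommensurable scales $s \ll l$ — the outer shell of incomplete cubes being absorbed as one more $O(s\,l^{d-1})$ volume error — and then run a Cauchy argument through the intermediate scale $s=\min(l,l')^{1/2}$. Your route handles incommensurability in a single estimate rather than in two stages, and it yields a rate for free: letting $l\to\infty$ in your comparison gives $|\be_k - \phi(s)|\leq C/s$. The paper's one-sided, Fekete-style organization has its own payoff: the identical commensurable-tiling estimates are recycled, with the multiparameter ergodic theorem replacing expectations, in the proof of the strong law (Theorem \ref{thm:strong_law}), which is why the paper sets things up that way. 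Two minor points. First, your count $M=\lfloor l/s\rfloor^d$ is exact only up to the alignment of the lattice $s\mZ^d$ with $W_l$; what your argument actually needs is $Ms^d = l^d(1-O(s/l))$ together with the boundedness of $\phi$, and that holds regardless. Second, for $k=0$ you need not defer to classical results: the proof of Lemma \ref{lem:MV_complex_bounds} (adding one simplex at a time) gives the same bound for $k=0$, which is also what the paper implicitly relies on, since its own proof makes no case distinction at $k=0$.
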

	
	\begin{remark}
		The lemma is interesting  only in the case when $\be_k > 0$, and this does not always hold. However, it can be guaranteed for negatively associated point processes (including Poisson processes, simple perturbed lattices and determinantal point processes) under some simple conditions on void probabilities, cf.\ \cite[Theorem 3.3]{Adler12}.
	\end{remark}

	\begin{proof}[Proof of Lemma \ref{lem:convergence_exp}]
		
		Set
		\beqq
		\psi(l)\ := \ \EXP{\beta_k(\C(\Phi_l,r))},
		\eeqq
		and define
		\begin{equation}
		\label{eqn:limsup_betti}
		\be_k\ :=\   \limsup_{l \to \infty}\frac{\psi(l)}{l^d}.
		\end{equation}

		Fix $t > 0$. Let $Q_{it}, i = 1,\ldots,m^d$ be an enumeration of $\{tz_i + W_t \subset W_{mt}\: z_i \in \mZ^d\}$. Note that the $Q_{it}, i = 1,\ldots,m^d$ form a partition $W_{mt}$.
		
		Define the complex
		\beqq
		\cK(r,t)\ :=  \ \bigcup_{i=1}^{m^d}\C(\Phi_{Q_{it}},r),
		\eeqq
		and note that it is a subcomplex of $\C(\Phi_{mt},r)$. Since the union here is of disjoint complexest,
		$$ \beta_k(\cK(r,t))\  =\  \sum_{i=1}^{m^d} \beta_k(\C(\Phi_{Q_{it}},r)).$$
		
		Note that the vertices of any simplex in $\C(\Phi_{mt},r) \setminus \cK(r,t)$ must lie in the set $\bigcup_{i=1}^{m^d} (\partial Q_{it})^{(2r)}$, where for any set $A\subset \mR^d$, $A^{(r)}$ is the set of points in $\mR^d$ with distance at most $r$ from $A$.
		Hence, by Lemma \ref{lem:MV_complex_bounds},
		\begin{equation}
		\Big| \beta_k(\C(\Phi_{mt},r)) \, -\, \sum_{i=1}^{m^d} \beta_k(\C(\Phi_{Q_{it}},r)) \Big|
		\ \leq\   \sum_{j=k}^{k+1} S_j(\Phi_{\bigcup_{i=1}^{m^d} (\partial Q_{it})^{(2r)}},r). \label{eqn:ineq_betti_numbers}
		\end{equation}
		Thus, since for $c:=c(d,r)$ large enough, for any $t\geq 1$,
		
		\[\big\|\bigcup_{i=1}^{m^d} (\partial Q_{it})^{(2r)} \big\| \ \leq \ cm^dt^{d-1}, \]
		it follows from Lemma \ref{lem:upper_bound_simplices} that
		
		\begin{equation}
		\label{eqn:boundary_simplices}
		\frac{1}{(mt)^d} \,
		\EXP{ \sum_{j=k}^{k+1} S_j(\Phi_{\bigcup_{i=1}^{m^d} (\partial Q_{it})^{(2r)}},r)}
		\  \leq \ \frac{c}{t}.
		\end{equation}
		By the stationarity of $\Phi$, taking expectations over (\ref{eqn:ineq_betti_numbers}) and applying \eqref{eqn:boundary_simplices} we obtain that, for any $t\geq 1$,
		\[ \frac{\psi(mt)}{(mt)^d}\  \geq \ \frac{\psi(t)}{t^d} - \frac{c}{t}. \]

		Now fix $\ep >0$. By (\ref{eqn:limsup_betti}), we can find an arbitrarily large $t_0 \geq 1$ such that $\frac{\psi(t_0)}{t_0^d} \geq \be_k - \frac{\ep}{2}$ and $\frac{c}{t_0} \leq \frac{\ep}{2}$. Hence, from the above we have that, for all $m \geq 1$,
		\[ \frac{\psi(mt_0)}{(mt_0)^d} \ \geq \ \be_k - \ep. \]

		Now take  $l > 0$, and let $m$ be the unique integer $m=m(l)$ such that $mt_0 \leq l < (m+1)t_0$. Again, applying Lemma \ref{lem:MV_complex_bounds} yields
		\begin{eqnarray}
		\big|\beta_k(\C(\Phi_l,r)) - \beta_k(\C(\Phi_{mt_0},r))\big|
		& \leq & \sum_{j = k}^{k+1}S_j(\Phi_l,r ; W_l\setminus W_{mt_0}).\label{eqn:error_term_betti}
		\end{eqnarray}
		Since $\|W_l\setminus W_{mt_0}\|\leq d(l-mt_0)l^{d-1}$, as before, using Lemma \ref{lem:upper_bound_simplices}, it is easy to verify that
		\begin{eqnarray*}
			\frac{\psi(l)}{l^d} & \geq & \frac{\psi(mt_0)^d}{(m+1)^d t_0^d} - O(m^{-1}) \ \geq\ (\be_k - \ep)\frac{m^d}{(m+1)^d} - O(m^{-1}).
		\end{eqnarray*}
		Since $m \to \infty$ as $l \to \infty$, it follows that $\liminf_{l \to \infty} \frac{\psi(l)}{l^d} \geq \be_k - \ep.$ This and  (\ref{eqn:limsup_betti})  complete the proof.
	\end{proof}
	
	\begin{theorem}
		\label{thm:strong_law}
		Let $\Phi$ be a unit intensity ergodic point process possessing all moments. Then, for  $0\leq k\leq d-1$, and $\be_k$ as in Lemma \ref{lem:convergence_exp},
		\[\frac{\beta_k(\C(\Phi_l,r))}{l^d} \ \convas \  \be_k.
		\]
	\end{theorem}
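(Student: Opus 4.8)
The plan is to upgrade the expectation-convergence argument of Lemma \ref{lem:convergence_exp} to an almost sure statement. Two substitutions are needed: the step in which one takes expectations and invokes stationarity is replaced by an application of the multivariate spatial ergodic theorem (the same \cite[Proposition 2.2]{Meester96} used in Lemma \ref{lem:strong_law_simplices}), and the expectation bounds on boundary simplices are replaced by the almost sure simplex asymptotics of Lemma \ref{lem:strong_law_simplices}. Throughout, $\psi(l):=\EXP{\beta_k(\C(\Phi_l,r))}$ as in the earlier proof.

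First I would fix an integer $t\geq 1$ and, exactly as in Lemma \ref{lem:convergence_exp}, partition $W_{mt}$ into the $m^d$ translated cubes $Q_{it}=tz_i+W_t$. Set $g(\psi):=\beta_k(\C(\psi_{W_t},r))$; by Lemma \ref{lem:MV_complex_bounds} (applied with the empty complex) together with Lemma \ref{lem:upper_bound_simplices}, $g$ is integrable with $\EXP{g(\Phi)}=\psi(t)$. Since $\beta_k(\C(\Phi_{Q_{it}},r))=g(\Phi-tz_i)$, the block sum is a spatial average of $g$ over lattice translates, and the ergodic theorem yields
\[ \frac{1}{(mt)^d}\sum_{i=1}^{m^d}\beta_k(\C(\Phi_{Q_{it}},r)) \ \convas\ \frac{\psi(t)}{t^d}, \qquad m\to\infty. \]
Here ergodicity is what guarantees the limit is the deterministic constant $\psi(t)/t^d$ rather than a random conditional expectation.

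Next I would control the two error terms almost surely. By inequality \eqref{eqn:ineq_betti_numbers}, the discrepancy between $\beta_k(\C(\Phi_{mt},r))$ and the block sum is at most $\sum_{j=k}^{k+1}S_j(\Phi_{\bigcup_i(\partial Q_{it})^{(2r)}},r)$; bounding this by $\sum_i b(\Phi-tz_i)$ with $b(\psi):=\sum_{j=k}^{k+1}S_j(\psi,r;(\partial W_t)^{(2r)})$ (each boundary simplex counted at least once), integrable with $\EXP{b(\Phi)}\leq c't^{d-1}$ by Lemma \ref{lem:upper_bound_simplices}, the ergodic theorem shows this error divided by $(mt)^d$ has almost sure $\limsup$ at most $c'/t$. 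To pass from the subsequence $l=mt$ to general $l$, write $mt\leq l<(m+1)t$ and combine \eqref{eqn:error_term_betti} with the identity $S_j(\Phi_{(m+1)t},r)-S_j(\Phi_{mt},r)=S_j(\Phi_{(m+1)t},r;W_{(m+1)t}\setminus W_{mt})$, which dominates the filling error; by Lemma \ref{lem:strong_law_simplices} both counts, suitably normalised, converge to the same $\widehat S_j$, so the filling error divided by $(mt)^d$ tends to $0$ almost surely.

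Combining these, for each fixed integer $t$ one obtains, almost surely,
\[ \frac{\psi(t)}{t^d}-\frac{c'}{t} \ \leq\ \liminf_{l\to\infty}\frac{\beta_k(\C(\Phi_l,r))}{l^d} \ \leq\ \limsup_{l\to\infty}\frac{\beta_k(\C(\Phi_l,r))}{l^d} \ \leq\ \frac{\psi(t)}{t^d}+\frac{c'}{t}. \]
Since there are only countably many integers $t$, these hold simultaneously off a single null set; letting $t\to\infty$ and invoking Lemma \ref{lem:convergence_exp} (so $\psi(t)/t^d\to\be_k$) pins both the $\liminf$ and $\limsup$ to $\be_k$, giving the claim. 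The main obstacle is precisely the almost sure control of the error terms: it no longer suffices to bound expectations, and one must instead realise the boundary simplex count as a spatial average and the annular count as a difference of window counts governed by Lemma \ref{lem:strong_law_simplices}, so that their normalised fluctuations vanish pathwise; the near-additivity furnished by Lemma \ref{lem:MV_complex_bounds} is what makes these reductions possible despite the non-additivity of $\beta_k$.
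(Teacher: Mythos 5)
Your proposal is correct and follows essentially the same route as the paper's own proof: the same block decomposition of $W_{mt}$ into translates $Q_{it}$, the same two applications of the multivariate ergodic theorem (to the block Betti numbers and to the boundary simplex counts, giving the $c'/t$ error), and the same use of Lemma \ref{lem:strong_law_simplices} to kill the annulus term when passing from the subsequence $l=mt$ to general $l$. The only differences are cosmetic: you dominate the boundary error via $S_j(\,\cdot\,,r;(\partial W_t)^{(2r)})$ rather than the paper's $S_j(\Phi_{(\partial Q_{it})^{(4r)}},r)$, and you organize the final limit as a sandwich over countably many integers $t$ rather than the paper's fixed-$\varepsilon$ choice of $t_0$.
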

	
	\begin{proof}
		As in the previous proof,  fix $t > 0$ and let $Q_{it}, i = 1,\ldots,m^d$  be the partition of
		$W_{mt}$ to translations of $W_t$.
		Further, for each real  $l>0$, let $m=m(l,t)$ be the  unique integer for which $mt \leq l < (m+1)t$.
		
		The proof contains two steps. Firstly, we shall establish a strong law for $\beta_k(\C(\Phi_{W_{mt}},r))$ in $m$, and then show that the error term in (\ref{eqn:error_term_betti}) vanishes asymptotically.  Many of our arguments will rely on the
		multi-parameter ergodic theorem (e.g\ \cite[Proposition 2.2]{Meester96}).

		Let $e_i,\ i=1,\ldots,d$, be the $d$ unit vectors in $\mR^d$, and $T_i=T_i(t)$  the measure preserving transformation  defined by a shift of $te_i$.
		Then, setting $Y = \beta_k(\C(\Phi_{W_t},r))$, and noting that   $\EXP{Y} \leq \EXP{\Phi(W_t)^{k+1}} < \infty$,   it follows immediately from the multi-parameter ergodic theorem that
		\beq
		\frac{1}{m^d}\sum_{i_1=0}^{m-1}\ldots \sum_{i_d=0}^{m-1}Y(T_1^{i_1}\ldots T_d^{i_d}(\Phi))
		&=& \frac{1}{m^d} \sum_{i=1}^{m^d} \beta_k(\C(\Phi_{Q_{it}},r))
		\notag \\
		&\convas &  \EXP{\beta_k(\C(\Phi_{W_t},r))},  \label{robconv1}\eeq
		as $m\to\infty$.
		
		Applying the multiparameter ergodic theorem again, but now with
		\beqq
		Y \ =\  S_k(\Phi_{(\partial W_t)^{(4r)}},r) + S_{k+1}(\Phi_{(\partial W_t)^{(4r)}},r),
		\eeqq
		we obtain
		\beq
		\label{robconv2}
		\frac{1}{m^d}\sum_{i=1}^{m^d} \left( S_k(\Phi_{(\partial Q_{it})^{(4r)}},r) + S_{k+1}(\Phi_{(\partial Q_{it})^{(4r)}},r) \right)\  \convas\ \widehat{S}_t,
		\eeq
		where $\widehat{S}_t \leq  ct^{d-1}$. This bound follows by applying Lemma \ref{lem:upper_bound_simplices} to obtain
		\beq
		\label{robeq1}
		\EXP{S_k(\Phi_{(\partial Q_{it})^{(4r)}},r) + S_{k+1}(\Phi_{(\partial Q_{it})^{(4r)}},r)} \leq  ct^{d-1},
		\eeq
		for some $c:= c(\cL_{\Phi},j,d,r)$.
		
		Note that, for $j=k,k+1$,
		\beq
		\label{robconv4}
		S_j(\Phi_{\bigcup_{i=1}^{m^d} (\partial Q_{it})^{(2r)}},r) \ \leq \ \sum_{i=1}^{m^d} S_j(\Phi_{(\partial Q_{it})}^{(4r)},r).
		\eeq
		It follows immediately from \eqref{robconv1}--\eqref{robconv4} that, with probability one,
		%
		\begin{align*}
		& \limsup_{m \to \infty} \Big| \frac{\beta_k(\C(\Phi_{mt},r))}{(mt)^d} - \frac{\EXP{\beta_k(\C(\Phi_{W_t},r))}}{t^d} \Big| \\
		&\, \, \, =\ \limsup_{m \to \infty} \Big| \frac{\beta_k(\C(\Phi_{mt},r))}{(mt)^d} - \frac{\sum_{i=1}^{m^d} \beta_k(\C(\Phi_{Q_{it}},r))}{(mt)^d}\Big| \\
		&\, \, \, \leq\ \limsup_{m \to \infty} \frac{1}{(mt)^d} \sum_{j=k}^{k+1} S_j(\Phi_{\bigcup_{i=1}^{m^d} (\partial Q_{it})^{(2r)}},r) \\
		&\, \, \, \leq\ \limsup_{m \to \infty} \frac{1}{(mt)^d} \sum_{j=k}^{k+1} \sum_{i=1}^{m^d} S_j(\Phi_{(\partial Q_{it})}^{(4r)},r)
		\\ &\, \, \, =\  \frac{\widehat{S}_t}{t^d}
		\  \leq\ \frac{c}{t}.
		\end{align*}
		
		Now, given $\varepsilon > 0$, by Lemma \ref{lem:convergence_exp}, we can choose $t_0$
		large enough so that, with probability one,
		\[ \lim_{m \to \infty} \Big| \frac{\beta_k(\C(\Phi_{mt_0},r))}{(mt_0)^d} - \be_k \Big|\ \leq\ \varepsilon. \]
		Now consider the error terms in (\ref{eqn:error_term_betti}). For $j = k,k+1$, we have that
		\beqq
		\frac{S_j(\Phi_l,r ; W_l\setminus W_{m(l)t_0})}{l^d} & \leq &\frac{S_j(\Phi_l,r)}{l^d} - \frac{S_j(\Phi_{m(l)t_0},r)}{l^d} \\
		& \leq& \frac{S_j(\Phi_{(m(l)+1)t_0},r)}{(m(l)t_0)^d} - \frac{S_j(\Phi_{m(l)t_0},r)}{((m(l)+1)t_0)^d}.
		\eeqq
		%
		%
		%
		By Lemma \ref{lem:strong_law_simplices}, we know that there exist
		$\widehat{S}_j(\Phi,r) \in [0,\infty)$, $j = k,k+1$, such that, with probability one,
		\beqq
		\lim_{l \to \infty} \frac{S_j(\Phi_{(m(l)+1)t_0},r)}{(m(l)t_0)^d} &=&  \lim_{l \to \infty} \frac{S_j(\Phi_{m(l)t_0},r)}{((m(l)+1)t_0)^d} \\
		&=& \widehat{S}_j(\Phi,r).
		\eeqq
		Hence, with probability one,
		\[ \lim_{l \to \infty} \frac{1}{l^d}\sum_{j = k}^{k+1}S_j(\Phi_l,r ; W_l\setminus W_{m(l)t_0})
		\ =\  0. \]
		Substituting this in (\ref{eqn:error_term_betti}) gives that, with probability one,
		\[ \lim_{l \to \infty} \Big|\frac{\beta_k(\C(\Phi_l,r))}{l^d} - \frac{\beta_k(\C(\Phi_{m(l)t_0},r))}{l^d}
		\Big|
		\ = \ 0,
		\]
		so that
		\[ \lim_{l \to \infty} \Big|\frac{\beta_k(\C(\Phi_l,r))}{l^d} - \be_k \Big| \ \leq\  \ep,  \]
		and the proof is complete.
	\end{proof}

	The following concentration inequality is an easy consequence of the general concentration inequality of \cite{Pemantle11}.
	\begin{theorem}
		\label{thm:conc_ineq_b0_DPP}
		Let $\Phi$ be a unit intensity stationary determinantal point process. Then for all $l \geq 1$,
		$\ep > 0$, and $a \in (\frac{1}{2},1]$, we have that
		\[ \pr{\Big|\beta_0(\C(\Phi_{l^{\frac{1}{d}}},r)) - \EXP{\beta_0(\C(\Phi_{l^{\frac{1}{d}}},r))})\Big| \,\geq\, \ep l^a} \ \leq\ 5 \exp\left(-\frac{\ep^2 l^{2a-1}}{16K_d(\ep l^{a-1} + 2K_d)} \right), \]
		where $K_d$ is the maximum number of disjoint unit balls that can be packed into $B_O(2)$.
	\end{theorem}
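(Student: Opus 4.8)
The plan is to deduce this from a single bounded-difference (Lipschitz) estimate for $\beta_0$ combined with the concentration inequality of \cite{Pemantle11} for Lipschitz functionals of strong Rayleigh measures, of which determinantal point processes are the canonical example. Recall that $\beta_0(\C(\cX,r))$ is the number of connected components of the $1$-skeleton of $\C(\cX,r)$, i.e.\ of the geometric graph on $\cX$ whose edges join pairs $x,y$ with $\|x-y\|\le 2r$. Thus $\beta_0$ depends on the configuration in a purely combinatorial way, and the whole argument hinges on controlling how much $\beta_0$ can change when a single point is inserted or deleted.

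The key step is the Lipschitz bound. Inserting a point $x$ into $\cX$ merges into one component exactly those components of $\C(\cX,r)$ that contain a point within distance $2r$ of $x$ (and produces a new singleton if there is no such point). If $y_1,\dots,y_m$ lie in pairwise distinct components and each satisfies $\|x-y_i\|\le 2r$, then necessarily $\|y_i-y_j\|>2r$ for $i\neq j$, since otherwise $y_i,y_j$ would be joined by an edge and hence share a component. Rescaling by $r$, the points $(y_i-x)/r$ lie in $B_O(2)$ and are pairwise more than $2$ apart, so the unit balls centred at them are disjoint; a packing comparison bounds their number by $K_d$. Consequently
\[
-(K_d-1)\ \le\ \beta_0(\C(\cX\cup\{x\},r))-\beta_0(\C(\cX,r))\ \le\ 1,
\]
so $\beta_0$ changes by at most $K_d$ under any single-point perturbation.

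To invoke \cite{Pemantle11}, recall that a determinantal point process is a strong Rayleigh measure; its restriction to $W_{l^{1/d}}$ is realized as a limit of discrete determinantal measures on a fine partition of the window into cells, whose occupancy vectors are strong Rayleigh on $\{0,1\}^n$. Taking cells small enough that each holds at most one point, a coordinate flip corresponds to adding or deleting a single point, so by the previous paragraph $\beta_0$ is $K_d$-Lipschitz on the occupancy cube. The inequality of \cite{Pemantle11} has the Bernstein form
\[
\pr{|g-\EXP{g}|\ge s}\ \le\ 5\exp\!\left(-\frac{s^{2}}{C(\mu+s)}\right)
\]
for a $1$-Lipschitz functional $g$ of a strong Rayleigh vector with $\mu$ (a bound on) the mean number of ones. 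I would apply this to $g=\beta_0/K_d$, pass to the continuum limit (under which $\beta_0$ is a.s.\ continuous for fixed $r$, and $\mu\to\EXP{\Phi(W_{l^{1/d}})}=l$ by unit intensity and $|W_{l^{1/d}}|=l$), and set $s=\ep l^{a}/K_d$. After clearing the factors of $K_d$ and dividing through by $l$, the exponent becomes $\ep^{2}l^{2a-1}/\big(16K_d(\ep l^{a-1}+2K_d)\big)$, exactly as claimed; the precise constants $16$ and $2K_d$ are read off from the exact normalization in \cite{Pemantle11}. The restriction $a\in(\tfrac12,1]$ is precisely what makes the exponent diverge as $l\to\infty$: for $a>\tfrac12$ one has $2a-1>0$, while $a\le 1$ keeps $\ep l^{a-1}$ bounded.

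The genuinely substantive point is the uniform Lipschitz estimate and its packing constant $K_d$; once $\beta_0$ is known to change by at most $K_d$ per point, the rest is bookkeeping, which is why the authors call the result an easy consequence of \cite{Pemantle11}. The only technical care I expect to be needed lies in the transfer from the discrete Pemantle--Peres inequality to the continuous determinantal process --- choosing the discretization so that the strong Rayleigh property is preserved, the Lipschitz bound survives uniformly in the mesh, and both $\mu$ and $\beta_0$ converge --- together with matching the numerical constants to the exact form of the cited bound.
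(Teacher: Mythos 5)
Your proposal is correct and follows essentially the same route as the paper: the paper proves the identical $K_d$-Lipschitz bound for $\beta_0$ under single-point insertion (one new component at most, and merges bounded by the same scaled ball-packing argument) and then applies \cite[Theorem 3.5]{Pemantle11} with $g=\beta_0/K_d$, $s=\ep l^a/K_d$, and mean point count $l$. The discrete-to-continuum transfer you flag as the technical care point is exactly what the paper delegates to \cite[Example 6.4]{Pemantle11}, so no new argument is needed there.
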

	\begin{proof} Firstly, note that $\beta_0$, viewed as a function on finite point sets is
		$K_d$-Lipschitz; viz.\ for any finite point set $\cX \subset \mR^d$ and $x \in \mR^d$,
		\[ \big|\beta_0(\C(\cX \cup \{x\},r)) - \beta_0(\C(\cX,r))\big| \ \leq \  K_d .\]
		This follows from the fact that, on the one hand,  adding a point $x$ to $\cX$ can add no more than one connected component to $\C(\cX,r)$.  On the other hand,  the largest decrease in the number of disjoint components in $\C(\cX,r)$  is bounded by the number of disjoint $r$-balls in $B_x(2r)$. By scale invariance, the latter number depends only on the dimension $d$ and not on $r$, and is denoted by $K_d$.
		
		The remainder of the proof is a simple application of \cite[Theorem 3.5]{Pemantle11} (see also \cite[Example 6.4]{Pemantle11}).
	\end{proof}

	\section{Poisson and binomial point processes}
	\label{sec:Poisson_Binomial}
	\sec
	
	Since there is already an extensive literature on $\beta_0(\C(\cX,r))$ for Poisson and binomial point processes, albeit in the language of connectedness of random graphs (e.g.\  \cite{Penrose03}),  in this section
	we shall restrict ourselves only to $\beta_k$ for $ 1\leq k\leq d-1$.
	
	The models we shall treat start with a Lebesgue-almost everywhere continuous probability density  $f$ on $\mR^d$,  with a compact, convex support that (for notational convenience) includes the origin, and such that
	\begin{equation}
	\label{condn:density}
	0 \  <\
	\inf_{x \in \text{supp}(f)}f(x)  \definedas f_* \ \leq\ f^* \definedas \sup_{x \in \mR^d} f(x) < \infty .\end{equation}
	The models are  $\cP_n$,  the Poisson point process on $\mR^d$ with intensity $nf$, and the  binomial point process $\cX_n=\{X_1,\ldots,X_n\}$, where the $X_i$ are i.i.d.\ random vectors with density $f$. From \cite{Kahle11}, we know that for both $\cP_n$ and $\cX_n$ the thermodynamic regime corresponds to the case $nr_n^d \to r \in (0,\infty)$, so that for such a radius regime we have that
	\[ \EXP{\beta_k(\C(\cP_n,r_n))} = \Theta(n), \qquad     \EXP{\beta_k(\C(\cX_n,r_n))} = \Theta(n). \]

	In proving limit results for  Betti numbers in these cases, much will depend on moment estimates for  the add-one cost function. The add-one cost function for a real-valued functional $F$ defined over finite point-sets $\cX$ is defined by
	\begin{equation}
	\label{eqn:add_One}
	D_xF(\cX)\  \definedas \ F(\cX \cup \{x\}) - F(\cX), \qquad x \in \mR^d.
	\end{equation}
	Our basic estimate follows.  For notational convenience, we write
	\[ \beta_k^n(\cX)\  \definedas \ \beta_k(\C(\cX,r_n)), \]
	where $\{r_n\}_{n \geq 1}$ is a sequence of radii to be determined.
	\begin{lemma}
		\label{lem:moments_add_One}
		Let $1\leq k \leq d-1$. For the Poisson point process $\cP_n$ and binomial point process $\cX_n$, with $nr_n^d \to r \in (0,\infty)$, we have that
		\beq
		\Delta_k  \ \definedas \  \max \left( \sup_{n \geq 1}\sup_{x \in \mR^d} \EXP{|D_x\beta^n_k(\cP_n)|^4}, \ \sup_{n \geq 1}\sup_{x \in \mR^d} \EXP{|D_x\beta^n_k(\cX_n)|^4}
		\right)
		\label{deltak}
		\eeq
		is finite
	\end{lemma}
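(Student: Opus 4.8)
The plan is to reduce the fourth moment of the add-one cost to the moments of a single local point count, via the combinatorial bound of Lemma \ref{lem:MV_complex_bounds}, and then to control that count uniformly in $n$ and $x$ using the thermodynamic scaling $nr_n^d \to r$.

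First I would apply Lemma \ref{lem:MV_complex_bounds} with $\cK = \C(\cX,r_n)$ and $\cK_1 = \C(\cX\cup\{x\},r_n)$. Since $\cK \subset \cK_1$ and the simplices of $\cK_1\setminus\cK$ are precisely those containing the new vertex $x$, the lemma bounds $|D_x\beta_k^n(\cX)|$ by the number of $k$-simplices plus the number of $(k+1)$-simplices of $\C(\cX\cup\{x\},r_n)$ that contain $x$. Now any simplex $\sigma\ni x$ in the \Cech complex has $\bigcap_{y\in\sigma}B_y(r_n)\neq\emptyset$, so each of its other vertices $y$ satisfies $|x-y|\le 2r_n$; hence, writing $N_x := \#\{\cX\cap B_x(2r_n)\}$, a $j$-simplex through $x$ is determined by choosing its remaining $j$ vertices from the $N_x$ points of $\cX$ in $B_x(2r_n)$. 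Consequently,
\[
|D_x\beta_k^n(\cX)| \ \le \ \binom{N_x}{k} + \binom{N_x}{k+1} \ =: \ g(N_x),
\]
a deterministic polynomial in $N_x$ of degree $k+1\le d$. Raising to the fourth power, $g(N_x)^4$ is a polynomial in $N_x$ of degree at most $4d$, so it suffices to bound $\EXP{N_x^m}$ for all $m\le 4d$, uniformly in $n$ and $x$ and for both processes.

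This is where the thermodynamic scaling enters. For the Poisson process $N_x\sim\Poi(\lambda_{n,x})$ with $\lambda_{n,x} = n\int_{B_x(2r_n)}f(y)\,dy \le f^*\,|B_O(1)|\,2^d\,(nr_n^d)$, which is bounded uniformly in $n,x$ because $nr_n^d\to r<\infty$; for the binomial process $N_x\sim\Bin(n,p_{n,x})$ with $np_{n,x}\le f^*\,|B_O(1)|\,2^d\,(nr_n^d)$, again uniformly bounded. Since the $m$-th moment of $\Poi(\lambda)$ is a polynomial in $\lambda$ with nonnegative (Stirling-number) coefficients, and the $m$-th moment of $\Bin(n,p)$ is the same polynomial evaluated at falling-factorial moments, for which $n(n-1)\cdots(n-m+1)p^m\le(np)^m$, in both cases $\EXP{N_x^m}$ is dominated by the $m$-th moment of a Poisson variable with the common bounding parameter. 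Thus $\sup_{n,x}\EXP{N_x^m}<\infty$ for every $m$, and taking expectations in $g(N_x)^4$ gives a finite bound for each of the two suprema defining $\Delta_k$ in \eqref{deltak}.

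The computations here are routine; the only genuine input beyond Lemma \ref{lem:MV_complex_bounds} is the localization of the add-one cost to the ball $B_x(2r_n)$, after which everything reduces to the moments of $N_x$. I expect the main (if mild) point to be making all the estimates uniform in both $n$ and $x$ simultaneously while covering the Poisson and binomial cases at once --- exactly the place where the thermodynamic regime $nr_n^d\to r$ is used to keep the relevant intensity $\lambda_{n,x}$, respectively mean count $np_{n,x}$, bounded.
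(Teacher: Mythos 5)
Your proof is correct and follows essentially the same route as the paper's: both localize the add-one cost via Lemma \ref{lem:MV_complex_bounds} to the simplices containing $x$, bound these by (a polynomial in) the number of points of the process in a ball of radius $O(r_n)$ about $x$, and then use the thermodynamic scaling $nr_n^d \to r$ to dominate that count by a Poisson or binomial variable with uniformly bounded mean, whose moments are finite. Your treatment is in fact slightly more careful than the paper's (you use the correct radius $2r_n$ for the localizing ball and spell out the Stirling-number domination of binomial moments by Poisson ones), but these are details, not a different argument.
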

	\begin{proof}
		The lemma is a consequence of the following simple bounds from Lemma \ref{lem:MV_complex_bounds}.
		\begin{eqnarray}
		|D_x\beta^n_k(\cP_n)| & \leq & \sum_{j=k}^{k+1}S_j(\cP_n,r_n ; \{x\})   \no \\
		&\leq& \left[\cP_n(B_x(r_n))\right]^k + \left[\cP_n(B_x(r_n))\right]^{k+1} \no \\
		& \leq &2\left[\cP_n(B_x(r_n))\right]^{k+1}, \no
		\end{eqnarray}
		and,  similarly,
		\beqq
		|D_x\beta^n_k(\cX_n)| &\leq& 2\left[\cX_n(B_x(r_n))\right]^{k+1} .
		\eeqq
		Set $r_*= \sup_{n \geq 1}\omega_dnr_n^d < \infty$, where $\omega_d$ is the volume of a $d$-dimensional unit ball. Let $\Poi(\lam)$ and $\Bin (n,p)$ denote the Poisson random variable with mean $\lam$ and the binomial random variable with parameters $n,p$ respectively. Then, we obtain that
		\begin{eqnarray}
		\EXP{|D_x\beta^n_k(\cP_n)|^4} & \leq & 16 \EXP{\left[\cP_n(B_x(r_n))\right]^{4(k+1)}}\  \leq \ 16 \EXP{\left[\Poi(r_*f^*)\right]^{4(k+1)}}, \no \\
		\EXP{|D_x\beta^n_k(\cX_n)|^4} & \leq & 16 \EXP{\left[\cX_n(B_x(r_n))\right]^{4(k+1)}} \ \leq \ 16 \EXP{\Big[\Bin (n,\frac{r_*f^*}{n})\Big]^{4(k+1)}}. \no
		\end{eqnarray}
		The lemma now follows from the boundedness of moments of Poisson and binomial random variables with constant means.
	\end{proof}

	\subsection{Strong laws}
	We begin with a  lemma giving  variance inequalities, which, en passant, establish weak laws for Betti numbers.
	
	\begin{lemma}
		\label{lem:weak_law_Poisson}
		For the Poisson point process $\cP_n$ and binomial point process $\cX_n$, with $nr_n^d \to r \in (0,\infty)$, and each $1\leq k \leq d-1$, there exists a positive constant $c_1$ such that for all $n \geq 1$,
		\beq
		\label{inequalities-rob}
		\VAR{\beta_k(\C(\cP_n,r_n))}  <  c_1 n, \qquad
		\VAR{\beta_k(\C(\cX_n,r_n))}  <  c_1 n.
		\eeq
		Thus, as $n \to \infty$,
		\beqq
		\label{weak-rob}
		n^{-1}\left[\beta_k(\C(\cP_n,r_n))\, -\, \EXP{\beta_k(\C(\cP_n,r_n))}\right]  \ \stackrel{\P}{\to}\  0,
		\eeqq
		and
		\beqq
		n^{-1}\left[\beta_k(\C(\cX_n,r_n)) \, -\,\EXP{\beta_k(\C(\cX_n,r_n))}\right]  \ \stackrel{\P}{\to}\  0.
		\eeqq
	\end{lemma}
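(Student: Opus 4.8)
The plan is to derive the two variance inequalities in \eqref{inequalities-rob} from the two standard second-order inequalities for functionals of independent/Poisson input — the Poincar\'e inequality for Poisson functionals in the Poisson case, and the Efron--Stein inequality in the binomial case — using the uniform fourth-moment control on the add-one cost already secured in Lemma \ref{lem:moments_add_One}. Once the $O(n)$ variance bounds are in hand, the two weak laws are an immediate consequence of Chebyshev's inequality. Throughout I will use that $\beta_k(\C(\cX,r_n))\le S_k(\cX,r_n)$ (the $k$-th Betti number is at most the number of $k$-simplices), so that, by the moment bounds underlying Lemma \ref{lem:moments_add_One}, each $\beta_k(\C(\cP_n,r_n))$ and $\beta_k(\C(\cX_n,r_n))$ is square-integrable, as required to apply either inequality.

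For the Poisson process I would invoke the Poincar\'e inequality for Poisson functionals: for square-integrable $F$ of a Poisson process with intensity measure $\mu$,
\[ \VAR{F}\ \leq\ \int_{\mR^d}\EXP{(D_xF)^2}\,\mu(dx). \]
Applying this with $F=\beta_k^n(\cP_n)$ and $\mu(dx)=nf(x)\,dx$, and using Cauchy--Schwarz (or Jensen) together with \eqref{deltak} to pass from the fourth to the second moment, namely $\sup_x\EXP{(D_x\beta_k^n(\cP_n))^2}\le \Delta_k^{1/2}$, yields
\[ \VAR{\beta_k(\C(\cP_n,r_n))}\ \leq\ \Delta_k^{1/2}\int_{\mR^d} nf(x)\,dx\ =\ \Delta_k^{1/2}\,n, \]
since $f$ is a probability density. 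This is the first bound in \eqref{inequalities-rob}.

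For the binomial process I would use the Efron--Stein inequality. Writing $\cX_n^{(i)}$ for the configuration in which $X_i$ is replaced by an independent copy $X_i'$,
\[ \VAR{\beta_k(\C(\cX_n,r_n))}\ \leq\ \frac12\sum_{i=1}^n\EXP{\big(\beta_k^n(\cX_n)-\beta_k^n(\cX_n^{(i)})\big)^2}. \]
The step requiring the most care — and the one I expect to be the main (though not deep) obstacle — is recognizing that $\cX_n\setminus\{X_i\}=\cX_n^{(i)}\setminus\{X_i'\}$ is a common configuration of $n-1$ i.i.d.\ points, so that each resampling difference decomposes into two add-one costs over this set,
\[ \beta_k^n(\cX_n)-\beta_k^n(\cX_n^{(i)})\ =\ D_{X_i}\beta_k^n(\cX_n\setminus\{X_i\})-D_{X_i'}\beta_k^n(\cX_n\setminus\{X_i\}). \]
Bounding $(a-b)^2\le 2a^2+2b^2$, conditioning on the location of the inserted point, and applying \eqref{deltak} — whose supremum over $n$ covers the $(n-1)$-point configurations — shows each summand is at most a constant multiple of $\Delta_k^{1/2}$; summing the $n$ terms gives the second bound in \eqref{inequalities-rob}, for a suitable $c_1$ that we may take to dominate both estimates.

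Finally, the weak laws follow directly: by Chebyshev's inequality, for any $\epsilon>0$,
\[ \pr{n^{-1}\big|\beta_k(\C(\cP_n,r_n))-\EXP{\beta_k(\C(\cP_n,r_n))}\big|>\epsilon}\ \leq\ \frac{\VAR{\beta_k(\C(\cP_n,r_n))}}{\epsilon^2 n^2}\ \leq\ \frac{c_1}{\epsilon^2 n}\ \to\ 0, \]
and the identical argument applies to $\cX_n$. Thus the only genuinely new work is the bookkeeping in the binomial step; the Poisson bound is a one-line application of the Poincar\'e inequality, and the convergence in probability is routine.
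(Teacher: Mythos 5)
Your proposal is correct and follows essentially the same route as the paper: the Poincar\'e inequality for Poisson functionals combined with the fourth-moment bound $\Delta_k$ from Lemma \ref{lem:moments_add_One} in the Poisson case, the Efron--Stein inequality with the resampling difference split into two add-one costs over a common $(n-1)$-point configuration in the binomial case, and Chebyshev for the weak laws. The only cosmetic difference is that you state Efron--Stein with an independent copy $X_i'$ while the paper uses the coupled form with the single fresh point $X_{n+1}$ playing that role for every $i$; these are the same inequality.
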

	
	\begin{proof}
		Note firstly that the two weak laws \eqref{weak-rob}  follow  immediately from the upper bounds in
		\eqref{inequalities-rob} and Chebyshev's inequality.
		
		Thus it remains to prove \eqref{inequalities-rob}. The Poisson case is the easiest, since by Poincar\'{e}'s inequality (e.g.\ \cite[equation (1.8)]{Last11}), the Cauchy-Schwartz inequality and Lemma \ref{lem:moments_add_One},
		\beqq
		\label{eqn:variance_upper_bound_Poi}
		\VAR{\beta_k(\C(\cP_n,r)}  &\leq&
		\int_{\mR^d} \EXP{[D_x\beta^n_k(\cP_n)]^2} n f(x) \md x  \\
		&\leq& n \sqrt{\Delta_k},
		\eeqq
		where $\Delta_k<\infty$ is given by \eqref{deltak}.
		%
		%
		
		For the binomial case, we  need the Efron-Stein inequality (cf.\ \cite{Efron81} and for the case of random vectors, \cite[(2.1)]{Steele86}), which states that for a symmetric function
		$F\: (\mR^d)^n \to \mR$,
		\beqq
		\VAR{F(\cX_n)} &\leq& \smallhalf \sum_{i=1}^n\EXP{\big[F(\cX_n)-F(\cX_{n+1} \setminus \{X_i\})\big]^2} ,
		\eeqq
		where $\cX_n$ and $\cX_{n+1}$ are coupled so that $\cX_{n+1} = \cX_{n}\cup \{X_{n+1}\}$.
		Applying this inequality, we have
		\begin{eqnarray}
		\VAR{\beta_k(\C(\cX_n,r_n))}  &\leq& \smallhalf \sum_{i=1}^n\EXP{\left[\beta_k(\cX_n,r_n)-\beta_k(\cX_{n+1} \setminus \{X_i\},r_n)\right]^2} \no \\
		& = & \smallhalf \sum_{i=1}^n\E \Big\{\left[\beta_k(\cX_n,r_n)-\beta_k(\cX_n \setminus \{X_i\},r_n) \right.   \no  \\
		&&\qquad\qquad + \left. \beta_k(\cX_n \setminus \{X_i\},r_n) - \beta_k(\cX_{n+1} \setminus \{X_i\},r_n)\right]^2\Big\} \no \\
		& \leq & \smallhalf \sum_{i=1}^n 4\sqrt{\Delta_k} \no \\
		&=& 2n\sqrt{\Delta_k}, \label{eqn:variance_upper_bound_Bin}
		\end{eqnarray}
		where in the second inequality we have used Lemma \ref{lem:moments_add_One}. This completes the proof .
	\end{proof}
	Thanks to the recent bound of  \cite[Theorem 5.2]{Last14} (see Lemma \ref{lem:lower_bound_variance}), we can also give a lower bound for the Poisson point process in the case of $k = d-1$.
	\begin{lemma}
		\label{lem:var_lower}
		For the Poisson point process $\cP_n$ with $nr_n^d \to r \in (0,\infty)$, let $n_0$ be such that there is a set $A \subset \text{\rm supp}(f)$ with $A \oplus B_O(3r_n) \subset \text{\rm supp}(f)$ and $|A| > 0$ for all $n \geq n_0$. Then, there exists a positive constant $c_2$ such that, for all $n \geq n_0$ as above,
		\beq
		\label{inequalities-rob1}
		\VAR{\beta_{d-1}(\C(\cP_n,r_n))}  >  c_2 n.
		\eeq
	\end{lemma}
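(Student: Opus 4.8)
The plan is to feed a local geometric construction into the Poisson variance lower bound of Lemma~\ref{lem:lower_bound_variance}. That bound controls $\VAR{\beta_{d-1}(\C(\cP_n,r_n))}$ from below by an integral over $\mR^d$, against the intensity $nf(x)\,dx$, of the square of a (suitably conditioned) version of the add-one cost $D_x\beta^n_{d-1}(\cP_n)$. Since $\int_{\mR^d} nf(x)\,dx = n$ and $f\geq f_*>0$ on its support, it suffices to produce a set of locations — which I take to be the given set $A$, of positive Lebesgue measure with $A\oplus B_O(3r_n)\subset\mathrm{supp}(f)$ — together with a local event $E_x$, measurable with respect to the restriction of $\cP_n$ to a bounded (radius $O(r_n)$) neighbourhood of $x$, such that (i) $\pr{E_x}\geq p$ for a constant $p>0$ independent of $n$ and of $x\in A$, and (ii) on $E_x$ one has $D_x\beta^n_{d-1}(\cP_n)=+1$ \emph{regardless of the configuration of $\cP_n$ outside that neighbourhood}. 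Granting (i)--(ii), the relevant integrand is bounded below by $p$ on $A$, and Lemma~\ref{lem:lower_bound_variance} yields $\VAR{\beta_{d-1}(\C(\cP_n,r_n))}\geq p f_*|A|\,n =: c_2 n$.

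For the construction I would first pass, via the nerve theorem, from $\beta_{d-1}(\C(\cP_n,r_n))$ to the $(d-1)$-st Betti number of the Boolean model $\cCB(\cP_n,r_n)$, and then read it, through Alexander duality, as the number of bounded connected components (``voids'') of the complement $\mR^d\setminus\cCB(\cP_n,r_n)$. The event $E_x$ is defined by fixing a centre $c=c(x)$ at distance $O(r_n)$ from $x$ and prescribing a pattern of $N=N(d)$ small cells (each of radius a small multiple of $r_n$) around $c$ in which $\cP_n$ is required to have a point, the pattern chosen so that the radius-$r_n$ balls of these points \emph{together with} $B_x(r_n)$ cover a topological $(d-1)$-sphere enclosing a genuinely empty void $B_c(\varepsilon r_n)$, while the same pattern \emph{without} $x$ leaves exactly one gap in that sphere; I also require $\cP_n$ to be empty in a bounded buffer region so that the void and the gap are clean. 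Because $A\oplus B_O(3r_n)\subset\mathrm{supp}(f)$ the entire pattern lies where $f\geq f_*$, and since $nr_n^d\to r\in(0,\infty)$ the probability of the prescribed occupied cells (each of volume $\Theta(r_n^d)$) together with the empty buffer converges to a strictly positive limit, uniformly in $x\in A$; this gives (i).

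The crux, and the step I expect to be the main obstacle, is (ii): robustness of the value of the add-one cost against the \emph{global} configuration. In general $D_x\beta_k$ is not a local functional, as stressed in the introduction, so (ii) is genuinely a topological assertion about the top dimension. On $E_x$ the ball $B_x(r_n)$ seals the empty region $V=B_c(\varepsilon r_n)$ off from the adjacent complementary region $R$ to which the gap connects it: adding $x$ splits the complement component $V\cup R$ into the bounded component $V$ (it sits inside a ball of radius $O(r_n)$, hence is bounded) and the remainder $R$. Inserting a ball into the Boolean model can only split or shrink complement components, never merge them, so no other component is affected; and splitting one component into exactly two, one of which is bounded, changes the count of bounded components by exactly $+1$ whether or not $R$ is bounded, since $\indic(V\cup R\text{ bounded})=\indic(R\text{ bounded})$ and $\indic(V\text{ bounded})=1$. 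This accounting depends only on the geometry of the shell and of $B_x(r_n)$ near $c$. This is precisely where $k=d-1$ is special: a fully enclosed void is a robustly non-bounding cycle that distant points can neither open nor fill, whereas for $1\leq k\leq d-2$ a locally created $k$-cycle may be rendered trivial by far-away simplices. Making the separation argument rigorous — verifying that on $E_x$ the shell together with $B_x(r_n)$ really isolates $V$ for every exterior configuration, and that deleting $x$ reconnects $V$ to $R$ — is the technical heart; the estimate (i) and the reduction of the first paragraph are then routine, and the resulting constant $c_2=pf_*|A|$ depends only on $d$, $r$, $f_*$ and $|A|$.
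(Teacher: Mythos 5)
Your reduction in the first paragraph is where the argument breaks. Conditions (i) and (ii) --- a local event $E_x$ with $\pr{E_x}\geq p$ on which $D_x\beta_{d-1}^n=+1$ robustly --- do \emph{not} imply the hypothesis \eqref{condn:variance_lower_bound} of Lemma \ref{lem:lower_bound_variance}. That hypothesis is not a bound on a conditioned or restricted add-one cost: it requires
$\big|\EXP{D_x\beta_{d-1}^n(\cP_n\cup\{x_1,\ldots,x_m\})}\big|\geq c$,
an unconditional expectation over \emph{all} of $\cP_n$, with only deterministic points $x_1,\ldots,x_m$ added. Your event $E_x$ (occupied cells plus empty buffer) has probability bounded away from $1$ as well as from $0$; on its complement, which has constant-order probability, the add-one cost can be negative --- for instance when $\cP_n$ itself happens to form a shell around $x$ whose interior void is filled by $B_x(r_n)$ --- and nothing in your proposal rules out that these negative contributions cancel the $+p$ contributed by $E_x$. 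This cancellation is exactly the obstacle that any first-chaos variance lower bound must overcome for a non-monotone functional like $\beta_{d-1}$, and a crude Cauchy--Schwarz bound on the complement does not help since $\pr{E_x^c}$ is not small. A related slip: ``inserting a ball can only split or shrink complement components'' overlooks that shrinking can annihilate a component outright, which is precisely the mechanism producing negative add-one costs off $E_x$.

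The paper's construction is engineered to kill this cancellation, and differs from yours in a crucial way: the deterministic points $y_1,\ldots,y_m$ form a \emph{complete} $(d-1)$-shell enclosing $x$, not a shell with a gap to be plugged by $x$. With the complete shell present, the duality of Remark \ref{rem:duality} shows that for \emph{every} realization of $\cP_n$ the vacant region $V_n=\mR^d\setminus\cCB(\cP_n\cup\{y_1,\ldots,y_m\},r_n)$ splits into the disjoint, mutually disconnected pieces $V_n\cap B_x(r_n)$ and $V_n\cap B_x(r_n)^c$; adding $x$ then deletes exactly the components inside $B_x(r_n)$, so $D_x\beta_{d-1}^n(\cP_n\cup\{y_1,\ldots,y_m\})\leq 0$ almost surely, i.e.\ sign-definite regardless of the global configuration (this is \eqref{eqn:lower_bound_add_Oneb}), while on the void event $\{\cP_n(B_x(2r_n))=0\}$, of probability at least $e^{-r_*f^*}$, the cost is $\leq -1$ (this is \eqref{eqn:lower_bound_add_Onea}). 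One-sidedness plus a positive-probability strict decrease is what gives $|\EXP{\cdot}|\geq c$ and makes Lemma \ref{lem:lower_bound_variance} applicable; your gap-shell has no such one-sided bound. Your observation that the $+1$ difference is robust to the exterior configuration is the right ingredient for a genuinely different route (a martingale or conditioning lower bound over disjoint boxes of side $\Theta(r_n)$), but that argument is not the one you give, and it is not what Lemma \ref{lem:lower_bound_variance} provides.
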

	\begin{remark}
		\label{rem:duality}
		Note that from the universal coefficient theorem (\cite[Theorem 45.8]{Munkres84}) and Alexander duality (\cite[Theorem 16]{Spanier66}), we have that\footnote{The $\tilde{H}_k$ are the reduced homology groups and it suffices to note that $\tilde{H}_k \cong H_k$ for $k \neq 0$ and $H_0 \cong \tilde{H}_0 \oplus \mathbb{F}$.}
		\beqq
		\tilde{H}_k(\cCB(\cP_n,r)) \ \cong \  \tilde{H}_{d-k-1}(\mR^d \setminus \cCB(\cP_n,r)).
		\eeqq
		Thus
		\beqq
		\beta_{d-1}(\cCB(\cP_n,r)) \ =\  \beta_0(\mR^d \setminus \cCB(\cP_n,r)) - 1.
		\eeqq
		$\beta_0(\mR^d \setminus \cCB(\cP_n,r))$ is nothing but the number of components of the vacant region of the Boolean model, which is easier to analyse and this will play a crucial role in our proof.
	\end{remark}
	\begin{proof}
		The proof will be based on Lemma \ref{lem:lower_bound_variance} and the duality argument of Remark \ref{rem:duality}. The finiteness of moments required by this lemma is guaranteed by Lemma \ref{lem:moments_add_One}. Choose $n \geq n_0$ for $n_0$ as defined in the statement of the lemma and also the set $A$ guaranteed by this assumption. Let $x \in A$. So we now  have to show that, for each $1 \leq k\leq d-1$, there exists an  $m$ (depending on $k$ and $d$ only) and a finite set of points $\{z_1,\ldots,z_m\} \in B_O(2r_n)$  such that for some constants $c, c_* \in (0,1)$ and for all $(y_1,\ldots,y_m) \in \prod_{i=1}^mB_{x+z_i}(c_*r_n)$,
		\beq
		\label{eqn:lower_bound_add_Onea}
		\pr{D_x\beta_k^n(\cP_n \cup \{y_1,\ldots,y_m \}) \leq -1} > c,
		\eeq
		and
		\beq
		\label{eqn:lower_bound_add_Oneb}
		D_x\beta_{d-1}^n(\cP_n \cup \{y_1,\ldots,y_m\}) \leq 0,
		\eeq
		with probability one. Though not explicitly mentioned, it is to be understood that the above choices of $m,z_i,c,c_*$ are not dependent on $x \in A$.
		The above two inequalities imply that
		\[ \left| \EXP{ D_x\beta_{d-1}^n(\cP_n \cup \{y_1,\ldots,y_m\})} \right| \  \geq \ c,\]
		so that condition \eqref{condn:variance_lower_bound} required in Lemma \ref{lem:lower_bound_variance} is satisfied for the add-one cost function with the constant $c$ above, and the lower  bound to the variance given there holds. Though, in this proof we require the construction only for $k = d-1$, we have stated one of the inequalities for all $k$ as this will be important to the variance lower bound argument in Theorem \ref{thm:clt_Poisson_Binomial}.
		
		Moreover, it is easy to check that, given our choice of $\{z_1,\ldots,z_m\}$ and  $c_*r_n$ (in place of $r$ in Lemma \ref{lem:lower_bound_variance}), the  bound in  \eqref{condn:variance_lower_bound-A} can be further bounded from below by $c_2 n$ for some $c_2>0$ (depending only on $f$, $A$, $k$, $r$ and $d$). This will prove the lemma.
		
		Thus, all that remains is to find an  $m$ and construct $z_1,\ldots,z_m$ satisfying the above conditions.
		
		Fix $k \in \{1,\ldots,d-1\}$. Let $S^k$ denote the unit $k$-dimensional sphere, and  embed it via the usual inclusion in the unit sphere in $\mR^d$.   For $\ep < \smallquarter$ let
		$S_\ep^k=\{x\in \mR^d\: \min_{y\in S^k} \|x-y\|\leq \ep\}$ denote the $\ep$-thickening of $S^k$.

		Now  choose $m$ large enough (but depending only on $k$ and $d$ only) such that there exist points $v_1,\ldots,v_{m}$ in $\mR^d$ so that
		\beq
		\label{mkballs}
		S_\ep^k  \ \subset\  \bigcup_{i =1}^{m}B_{v_i}(1)\  \subset \ \left(B_O(\smallquarter)\right)^c \eeq
		and, for all $0\leq j\leq d-1$,
		\beq
		\label{betak-rob}
		\beta_j\left(\C(\{v_1,\ldots,v_{m}\},1)\right) \ =\   \beta_j(S^k)=\beta_j(S^k_\ep).
		\eeq
		(Recall that $ \beta_j(S^k)=0$ for $j\neq 0,k$, while $\beta_0(S^k) = \beta_k(S^k) =1$.)
		
		Now, if needed choose $m$ larger such that there is a $c_* > 0$ for which all $(y_1,\ldots,y_m) \in \prod_{i=1}^mB_{c_*}(v_i)$ satisfy \eqref{mkballs} and \eqref{betak-rob}. Note that by scaling we have, for all
		$\{y_1,\ldots,y_m\} \in \prod_{i=1}^mB_{r_nv_i}(c_*r_n) $,
		\beqq
		r_nS^k_{r_n\ep}\ \subset \ \bigcup_{i =1}^{m}B_{y_i}(r_n) \ \subset\  \left(B_O(r_n/{4})\right)^c,
		\eeqq
		while $\beta_k(\C(\{y_1,\ldots,y_m\},r_n)) = 1$.
		
		Setting $z_i = r_nv_i$ for $i = 1,\ldots, m$, we have that $z_i \in B_O(2r_n)$ as required as well as $c_*$ chosen as above ensures the size requirements we need.  So, what remains is to show that \eqref{eqn:lower_bound_add_Onea} and \eqref{eqn:lower_bound_add_Oneb} hold for $\{y_1,\ldots,y_m\}\in B_{n,m} := \prod_{i=1}^mB_{x+z_i}(c_*r_n)$.
		
		%
		On the other hand, the structure of $B_{n,m}$ implies that, for $\{y_1,\ldots,y_m\}\in B_{n,m}$,
		\beqq
		\pr{D_x\beta_k^n(\cP_n \cup \{y_1,\ldots,y_m \}) \leq -1 \ \big|\  \cP_n(B_x(2r_n)) = 0} \ =\  1.
		\eeqq
		Furthermore, it is immediate from Poisson void probabilities that
		\beqq
		\pr{\cP_n(B_x(2r_n)) = 0} \ \geq\    e^{-f^*n\omega_dr_n^d} \ \geq\  e^{-r_*f^*} > 0,
		\eeqq
		where $r_*:= \sup_{n \geq 1}n\omega_dr_n^d$. These two facts together imply that
		(\ref{eqn:lower_bound_add_Onea}) holds with $c = e^{-r_*f^*} > 0$.
		
		We now turn to the second of these inequalities (which is only for $k = d-1$), for which we need the nerve theorem (\cite[Theorem 10.7]{Bjorner95}) along with duality argument (Remark \ref{rem:duality}). The nerve theorem allows us to prove the inequality for $\beta_{d-1}(\cCB(\cP_n,r_n))$ instead of $\beta_{d-1}(\cC(\cP_n,r_n))$, and the duality argument further reduces our task to proving
		\beq
		\label{eqn:vac_comp_bd}
		D_x\beta_0^n(\mR^d \setminus \cCB(\cP_n \cup \{y_1,\ldots,y_m\},r_n)) \leq 0,
		\eeq
		with probability one.
		
		Set $V_n := \mR^d \setminus \cCB(\cP_n \cup \{y_1,\ldots,y_m\},r_n)$. Since $x \oplus r_nS_{r_n\ep}^{d-1}  \ \subset\  \bigcup_{i =1}^{m}B_{y_i}(r_n)$, we have that $V_n$ is the disjoint union of $V_n \cap B_x(r_n)$ and $V_n \cap B_x(r_n)^c$. Thus,
		\[ \beta_0^n(\mR^d \setminus \cCB(\cP_n \cup \{y_1,\ldots,y_m\},r_n)) = \beta_0(V_n \cap B_x(r_n)) + \beta_0(V_n \cap B_x(r_n)^c).\]
		So,
		\begin{eqnarray*}
			&  & D_x\beta_0^n(\mR^d \setminus \cCB(\cP_n \cup \{y_1,\ldots,y_m\},r_n)) \\
			& = & \beta_0^n(\mR^d \setminus \cCB(\cP_n \cup \{x,y_1,\ldots,y_m\},r_n)) - \beta_0(V_n \cap B_x(r_n)) - \beta_0(V_n \cap B_x(r_n)^c) \\
			& = &  \beta_0(V_n \cap B_x(r_n)^c) - \beta_0(V_n \cap B_x(r_n)) - \beta_0(V_n \cap B_x(r_n)^c) \\
			& = & - \beta_0(V_n \cap B_x(r_n)) \leq 0,
		\end{eqnarray*}
		where in the second equality, we have used the fact
		\[ B_x(r_n) \subset \cCB(\cP_n \cup \{x,y_1,\ldots,y_m\},r_n). \]
		This proves (\ref{eqn:vac_comp_bd}) and hence  we have (\ref{eqn:lower_bound_add_Oneb}), which was all that was required to complete the proof.
	\end{proof}
	
	Our next main result is a concentration inequality for $\beta_k  (\C(\cX_n,r_n)$.
	
	\begin{theorem}
		\label{thm:conc_ineq_Binomial}
		Let $1\leq k \leq d-1$,  $\cX_n$ be a binomial point process, and assume that $nr_n^d \to r\in (0,\infty)$. Then, for any $a > \frac{1}{2}$ and $\ep > 0$, for $n$ large enough,
		\[ \pr{\big|\beta_k(\C(\cX_n,r_n) - \EXP{\beta_k(\C(\cX_n,r_n)}\big| \geq \ep n^a} \ \leq \ \frac{{C}}{\ep}n^{2k+2-a}\exp(-n^{\gamma}),\]
		where $\gamma= {(2a-1)}/{4k}$ and ${C} > 0$ is a constant depending only on $a,r,k,d$ and the density $f$.
	\end{theorem}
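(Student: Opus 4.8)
The plan is to run the bounded–differences (Azuma) machinery on a Doob martingale, the whole difficulty being that the one–point modification costs of $\beta_k$ are not uniformly bounded and so must be truncated. Write $F:=\beta_k(\C(\cX_n,r_n))$ and $\cF_i:=\sigma(X_1,\dots,X_i)$, so that $M_i:=\EXP{F\mid\cF_i}$ is a martingale with $M_0=\EXP F$ and $M_n=F$. If $\cX_n^{(i)}$ denotes $\cX_n$ with $X_i$ resampled to an independent copy $X_i'$, one has the standard increment bound $|M_i-M_{i-1}|\le\EXP{\,|F(\cX_n)-F(\cX_n^{(i)})|\mid\cF_i}$, and two applications of Lemma~\ref{lem:MV_complex_bounds} (exactly as in Lemma~\ref{lem:moments_add_One}) give $|F(\cX_n)-F(\cX_n^{(i)})|\le 2[\cX_n(B_{X_i}(r_n))]^{k+1}+2[\cX_n^{(i)}(B_{X_i'}(r_n))]^{k+1}$. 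The increments are therefore governed by local point counts, which have constant mean but are unbounded.

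First I would truncate. Partition a neighbourhood of $\mathrm{supp}(f)$ into $O(n)$ cubes of side comparable to $r_n$ and let $G=G_M$ be the event that no cube contains more than $M$ points, for a level $M=M_n$ to be fixed later. Since $nr_n^d\to r$, every ball $B_x(r_n)$ meets only $O(1)$ cubes, so on $G$ each local count is at most $CM$ and hence each martingale increment is at most $b:=C'M^{k+1}$. On this event, Azuma--Hoeffding applied to the truncated martingale yields $\pr{|F-\EXP F|\ge\tfrac{\ep}{2}n^a,\,G}\le 2\exp\!\big(-\ep^2 n^{2a-1}/(C''M^{2(k+1)})\big)$.

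The event $G^c$ I would handle by a first–moment estimate rather than through any uniform Lipschitz constant: using the deterministic bound $F\le S_k(\cX_n,r_n)\le Cn^{k+1}$ and Markov's inequality, the contribution of $G^c$ to the tail is at most $\tfrac{C}{\ep n^a}\,\EXP{F\,\1_{G^c}}\le\tfrac{C}{\ep n^a}\,n^{k+1}\,\pr{G^c}$, where $\pr{G^c}$ is controlled by a union bound over the $O(n)$ cubes together with the Chernoff tail $\pr{\Bin(n,O(1/n))>M}\le e^{-cM\log M}$. It is here that the factor $n^{2k+2}$ enters, through $F=O(n^{k+1})$. The truncation level $M=M_n$ is then chosen to balance this binomial tail against the Azuma bound; tracking the two exponents produces the stated $\gm=(2a-1)/4k$ and, after collecting the polynomial factors, the bound $\tfrac{C}{\ep}n^{2k+2-a}e^{-n^{\gm}}$.

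The crux — and the reason Betti numbers are harder here than additive functionals — is precisely that the increments are genuinely unbounded, so Azuma cannot be applied directly and one is forced to truncate. The delicate point is that the worst–case increment on $G$ is as large as $M^{k+1}$, so $M$ must be pushed into a narrow window: large enough that the discarded configurations are super–polynomially rare, yet small enough that the Azuma exponent $\sim n^{2a-1}/M^{2(k+1)}$ does not degrade below $n^{\gm}$. Making this window nonempty uniformly over $1\le k\le d-1$, and routing the $G^c$ contribution through the crude bound $F=O(n^{k+1})$ instead of the unavailable uniform Lipschitz constant, is where the real work lies; this is the step I would expect to be the main obstacle, and the one most naturally isolated into an appendix martingale lemma.
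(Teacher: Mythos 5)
Your overall strategy is the paper's: a Doob martingale on $\cF_i=\sigma(X_1,\dots,X_i)$ with increments controlled through Lemma \ref{lem:MV_complex_bounds}, truncation by a cube-occupancy event (the paper's $\mathbb{A}_n$, cubes of side $r_n$, threshold $\max(r,1)n^{\gamma}$ playing the role of your $M$), a Chernoff-plus-union bound for the bad event, and an Azuma-type bound on the good part. Your Markov treatment of the contribution of $G^c$ is also fine and plays the same role as the corresponding term in the paper. But there are two genuine gaps.

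First, the step ``Azuma--Hoeffding applied to the truncated martingale yields $\pr{|F-\EXP{F}|\geq \ep n^a/2,\,G}\leq 2\exp\!\left(-\ep^2 n^{2a-1}/(C''M^{2(k+1)})\right)$'' is invalid as stated, and it is the heart of the proof rather than a routine application. The increments $D_i=M_i-M_{i-1}$ are conditional expectations that average over the not-yet-revealed points $X_{i+1},\dots,X_n$, whereas $G$ depends on all $n$ points and is not $\cF_i$-measurable; hence $|D_i|$ need not be small on $G$, and truncating or restricting a martingale to an event destroys the martingale property, so Azuma cannot be applied to it. The paper resolves precisely this difficulty: it introduces the events $B_{i,n}=\{\pr{A_{i,n}^c\mid\cF_i}\leq n^{-k}\}$, combines the good-event increment bound with the deterministic worst-case bound to get $|D_i^{(n)}|\leq c_2(rn^{\gamma})^k+n^k\pr{A_{i,n}^c\mid\cF_i}$, so that $|D_i^{(n)}|\1_{B_{i,n}}\leq c_3(rn^{\gamma})^k$, bounds $\pr{B_{i,n}^c}$ by Markov, and then invokes the martingale inequality of \cite[Lemma 1]{Chalker99}, which is designed exactly for differences bounded only with high probability (its prefactor $1+2\sup_i\|D_i^{(n)}\|_{\infty}/b_1$ is where the polynomial factor in the theorem comes from). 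You correctly flag this as ``the main obstacle \dots most naturally isolated into an appendix martingale lemma,'' but that lemma \emph{is} the proof; without it, or the conditional-probability device that replaces it, the argument is incomplete.

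Second, your exponent bookkeeping does not produce the stated $\gamma=(2a-1)/4k$. With your good-event increment bound $CM^{k+1}$ (which is what Lemma \ref{lem:MV_complex_bounds} actually gives, since it counts $(k{+}1)$-simplices as well as $k$-simplices), the Azuma exponent is of order $n^{2a-1}/M^{2(k+1)}$. Taking $M=n^{\gamma'}$ you need simultaneously $\gamma'\geq\gamma$ (so the bad-event term is $O(e^{-n^{\gamma}})$, up to logarithms) and $2a-1-2(k+1)\gamma'>\gamma$ (so the Azuma term is dominated). For $k=1$ and $\gamma=(2a-1)/4$ this is impossible: whenever $\gamma'\geq(2a-1)/4$ one has $2a-1-4\gamma'\leq 0$, so the Azuma bound does not decay at all; balancing the two exponents gives only $\gamma\approx(2a-1)/(2k+3)$. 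The paper attains $\gamma=(2a-1)/4k$ because its proof bounds the good-event increment by $c_2(rn^{\gamma})^{k}$ --- exponent $k$, not $k+1$ --- i.e.\ it treats $\sum_{j=k}^{k+1}S_j(\cX_{n+1},r_n;\{X_i,X_{n+1}\})$ as $O(n^{\gamma k})$. (Whether that exponent-$k$ bound is fully justified from Lemma \ref{lem:MV_complex_bounds} is itself a delicate point of the paper's argument; but your sketch, as written, establishes the theorem only with a strictly smaller $\gamma$, at least when $k=1$.) So the claim that ``tracking the two exponents produces the stated $\gamma$'' is unsubstantiated and, with your increment bound, false for $k=1$.
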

	\remove{ \begin{remark}
			In particular, choose $\tilde{a} < \frac{1}{2}$ and set $a = 1 - \tilde{a}$. Then, with constants $\gamma, \widehat{c}, \tilde{c}$ as in Theorem \ref{thm:conc_ineq_Binomial}, we obtain that
			\[ \pr{|\frac{\beta_k(\C(\cX_n,r_n) - \EXP{\beta_k(\C(\cX_n,r_n)}}{n}| \geq \epsilon n^{-\tilde{a}}} \leq \frac{\widehat{c}}{\ep}n^{2(k+1)-a}\exp(-n^{\gamma}).\]
		\end{remark}
	}
	The proof, close to that of \cite[Theorem 3.17]{Penrose03},   is based on  a concentration inequality for martingale differences. \\

	\begin{proof}
		Fix $n \in \mN$. Let $Q_{n,i}$ be a partition of $\mR^d$ into cubes of side length $r_n$. Define the set $\mathbb{A}_n$ as follows:
		\[ \mathbb{A}_n\ \definedas \ \left\{ \cX\: |\cX| = n,\ \forall i,\ \cX(Q_{n,i} \cap \text{supp}(f)) \leq \max(r,1)n^{\gamma} \right\}. \]

		For large enough $n$, since $\cX_n(Q_{n,i})$ is stochastically dominated by a $\Bin (n,f^*r_n^d)$ random variable, elementary bounds (e.g.\
		\cite[Lemma 1.1]{Penrose03}), yield that
		\[ \pr{\cX_n \notin \mathbb{A}_n}\  \leq \ {c}_1 n \exp(-n^{\gamma}), \]
		for some constant ${c_1}$. Since the above bound is dependent only on the mean of the binomial random variable and $r$, the constants $d$ and $r$ are suppressed. Recall that the points of $\cX_n$ are denoted by $X_1,\dots,X_n$, and let $\cF_i = \sigma(X_1,\ldots,X_i)$ be the sequence of $\sigma$-fields they generate, with $\cF_0$ denoting the trivial $\sigma$-field. (The actual ordering of the $X_j$ will not be important.) We can define the finite martingale
		\beqq
		M^{(n)}_{i}\ \definedas\  \EXP{\beta_k(\C(\cX_n,r_n))\mid\cF_i},
		\eeqq
		for $0=1,\dots,n$,  along with the corresponding  martingale differences
		\[ D^{(n)}_{i}\ \definedas \ \EXP{\beta_k(\C(\cX_n,r_n))\mid \cF_i} - \EXP{\beta_k(\C(\cX_n,r_n)) \mid \cF_{i-1}}, \]
		$i=1,\dots,n$, and $D^{(n)}_0 =0$.
		Writing
		\beqq
		\beta_k(\C(\cX_n,r_n)) - \EXP{\beta_k(\C(\cX_n,r_n))} \ =\  \sum_{i=0}^nD_{i}^{(n)},
		\eeqq
		and  setting $\cX^{i}_n = \cX_{n+1}\setminus \{X_i\}$, we have that $D_{i}^{(n)}$ can be represented as
		\[ D_{i}^{(n)} \ = \  \EXP{\beta_k(\C(\cX_n,r_n)) - \beta_k(\C(\cX^i_n,r_n)) \mid \cF_i}. \]

		Let $A_{i,n}:= \{ \cX_n \in \mathbb{A}_n , \cX^i_n \in \mathbb{A}_n \}$. Then,  recalling that
		$S_j(\cX,r;A)$   denotes the number of $j$-simplices with at least one vertex in $A$, and appealing again to Lemma \ref{lem:MV_complex_bounds}, we have that, conditioned on the event $A_{i,n}$, for $n$ large enough,
		\[ |\beta_k(\C(\cX_n,r_n)) - \beta_k(\C(\cX^i_n,r_n))| \  \leq \  \sum_{j=k}^{k+1}S_j(\cX_{n+1},r_n ; \{X_i,X_{n+1}\}) \ \leq\  {c}_2(rn^{\gamma})^k. \]
		In all cases, we have the universal bound $|\beta_k(\C(\cX_n,r_n)) - \beta_k(\C(\cX^i_n,r_n))| \leq n^k$, and so,
		\begin{eqnarray*}
			\big|D_{i}^{(n)}\big| & = & \EXP{\big(\big|D_{i}^{(n)}\big|  \1_{A_{i,n}}\big) \big| \cF_i} + \EXP{\big(\big|D_{i}^{(n)}\big| \1_{A_{i,n}^c}\big) \big| \cF_i} \\
			& \leq & {c}_2(rn^{\gamma})^k + n^{k}\pr{A^c_{i,n}|\cF_i}.
		\end{eqnarray*}
		Defining $B_{i,n}:= \{ \pr{A^c_{i,n}\mid\cF_i} \leq n^{-k} \}$,  Markov's inequality implies
		\[\pr{B_{i,n}^c} \ \leq \ n^{k}\EXP{\pr{A^c_{i,n}\mid\cF_i}} \ =\  n^{k}\pr{A_{i,n}^c} \ \leq\  2 {c}_1n^{k+1} \exp(-n^{\gamma}). \]
		Thus, since $|D_{i}^{(n)}|\1_{B_{i,n}} \leq {c}_3(rn^{\gamma})^k$, using
		\cite[Lemma 1]{Chalker99},
		we have that for any $b_1,b_2 > 0$,
		\[\pr{\Big|\sum_{i = 1}^n D_{i}^{(n)}\Big| > b_1} \  \leq\  2\exp(-\frac{b_1^2}{32nb_2^2}) + \left(1 + \frac{2\sup_i \|D_{i}^{(n)}\|_{\infty}}{b_1}\right)\sum_{i=1}^n  \pr{\left|D_i\right| > b_2}. \]
		Choosing $b_1 = \ep n^a$ and $b_2 = {c}_3(rn^{\gamma})^k$, we have
		\begin{eqnarray*}
			&& \pr{|\beta_k(\C(\cX_n,r_n)) - \EXP{\beta_k(\C(\cX_n,r_n))}| \geq \ep n^a} \\
			&& \quad\quad \leq \  2\exp(-\frac{\ep^2 n^{2a}}{32n{c}_3^2(rn^{\gamma})^{2k}}) \ + \ \left( 1 +2\ep^{-1}n^{k-a}\right)\sum_{i=1}^n\pr{|D_{i}^{(n)}| > {c}_3(rn^{\gamma})^k} \\
			&& \quad\quad \leq \ 2\exp(-\frac{\ep^2 n^{2a}}{32n{c}_3^2(rn^{\gamma})^{2k}}) \ +\ \left( 1 +2\ep^{-1}n^{k-a}\right)\sum_{i=1}^n\pr{B_{i,n}^c} \\
			&& \quad\quad \leq \ 2\exp(-\frac{\ep^2 n^{2a-2k\gamma -1}}{32{c}_3^2r^{2k}}) \ +\ \left( 1 + 2\ep^{-1}n^{k-a}\right)2{c}_1n^{k+2}\exp(-n^{\gamma}) \\
			&& \quad\quad \leq \ \frac{{c}}{\ep}n^{2k+2-a}\exp(-n^{\gamma}),
		\end{eqnarray*}
		{for large enough $n$ and} a constant ${c} > 0$ which depends only on $a,r,k,d$. The last inequality above follows from the fact that $\gamma < a - \frac{1}{2}$ and hence, for large enough $n$, $\exp(-n^{\gamma})$ is the dominating term in the penultimate expression.
	\end{proof}

	We now finally have the ingredients needed to lift the weak laws of Lemma \ref{lem:weak_law_Poisson} to the promised strong convergence.

	\begin{theorem}
		\label{thm:strong_law_Poisson}
		For the Poisson point process $\cP_n$ and binomial point process $\cX_n$, with $nr_n^d \to r \in (0,\infty)$, and each $1\leq k \leq d-1$,
		we have, with probability one,
		\[ \lim_{n \to \infty} n^{-1} \left[\beta_k(\C(\cP_n,r_n)) -   \EXP{\beta_k(\C(\cP_n,r_n)} \right]\ =\ 0, \]
		and
		\[ \lim_{n \to \infty}   n^{-1}\left[ \beta_k(\C(\cX_n,r_n)) -   \EXP{\beta_k(\C(\cX_n,r_n)} \right]\ =\  0. \]
		%
		
		%
	\end{theorem}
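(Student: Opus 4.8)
The two assertions are both of the form $n^{-1}[\beta_k-\EXP{\beta_k}]\convas 0$, and in each case my plan is to produce a summable tail bound for $\pr{|\beta_k-\EXP{\beta_k}|\ge\ep n}$, apply the Borel--Cantelli lemma, and then take a countable intersection over $\ep=1/q$, $q\in\mN$, to pass to almost sure convergence. For the binomial statement the tail bound is already available: applying Theorem \ref{thm:conc_ineq_Binomial} with $a=1$ (admissible, since $1>\tfrac12$, and then $\gamma=1/(4k)$) gives, for all large $n$,
\[ \pr{\big|\beta_k(\C(\cX_n,r_n))-\EXP{\beta_k(\C(\cX_n,r_n))}\big|\ge\ep n}\ \le\ \frac{C}{\ep}\,n^{2k+1}\exp(-n^{1/(4k)}). \]
Since $n^{1/(4k)}$ eventually dominates $(2k+1)\log n$, the right-hand side is summable in $n$, so Borel--Cantelli yields $n^{-1}[\beta_k(\C(\cX_n,r_n))-\EXP{\beta_k(\C(\cX_n,r_n))}]\convas 0$, which settles the binomial case.

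For the Poisson case I would de-Poissonize. Because $\int_{\mR^d}f=1$, the process $\cP_n$ has $N_n\sim\Poi(n)$ points, conditionally i.i.d.\ with density $f$, so on a common space $\cP_n\eqindist\cX_{N_n}$, where $\cX_m=\{X_1,\dots,X_m\}$ is built from a single i.i.d.\ sequence. Write $g(m):=\EXP{\beta_k(\C(\cX_m,r_n))}$ for the binomial mean \emph{at the fixed radius} $r_n$, so that $\EXP{\beta_k(\C(\cP_n,r_n))}=\EXP{g(N_n)}$. Two elementary inputs drive the argument. First, the pathwise bound $|D_x\beta^n_k|\le 2[\cX(B_x(r_n))]^{k+1}$ from the proof of Lemma \ref{lem:moments_add_One} gives $|g(m+1)-g(m)|\le C_0$ uniformly for $m$ comparable to $n$, whence $g$ is Lipschitz, $|g(m)-g(n)|\le C_0|m-n|$, and in particular $|\EXP{\beta_k(\C(\cP_n,r_n))}-g(n)|\le C_0\EXP{|N_n-n|}=O(\sqrt n)=o(n)$; thus it suffices to center at $g(n)$. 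Second, a Poisson Chernoff bound gives $\pr{|N_n-n|>n^{3/4}}\le 2\exp(-c\sqrt n)$, which is summable.

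Conditioning on $N_n$ and splitting on $\{|N_n-n|\le n^{3/4}\}$, I would then bound
\[ \pr{\big|\beta_k(\C(\cP_n,r_n))-g(n)\big|\ge\ep n}\ \le\ 2e^{-c\sqrt n}+\sum_{|m-n|\le n^{3/4}}\pr{N_n=m}\,\pr{\big|\beta_k(\C(\cX_m,r_n))-g(m)\big|\ge\tfrac{\ep}{2}n}, \]
where on the good event the Lipschitz estimate $|g(m)-g(n)|\le C_0n^{3/4}=o(n)$ lets me replace the centering $g(n)$ by $g(m)$ at a cost of $\tfrac{\ep}{2}n$. For each such $m$ the pair $(\cX_m,r_n)$ still lies in the thermodynamic regime, since $m r_n^d=(m/n)\,n r_n^d\to r$, so Theorem \ref{thm:conc_ineq_Binomial} applies with constants uniform in $m$ over this window and bounds each summand by $C'n^{2k+1}\exp(-c'n^{1/(4k)})$. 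Hence the whole right-hand side is summable, Borel--Cantelli applies, and combining with $|\EXP{\beta_k(\C(\cP_n,r_n))}-g(n)|=o(n)$ gives $n^{-1}[\beta_k(\C(\cP_n,r_n))-\EXP{\beta_k(\C(\cP_n,r_n))}]\convas 0$.

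The main obstacle is precisely the last step of the Poisson argument: one must verify that Theorem \ref{thm:conc_ineq_Binomial} can be applied to the de-Poissonized complexes $\C(\cX_m,r_n)$ with the \emph{mismatched} radius $r_n$, uniformly for $m$ in the window $|m-n|\le n^{3/4}$, and that the successive shifts of centering (from the true Poisson mean to $g(n)$, and then from $g(n)$ to $g(m)$) cost only $o(n)$. Both points reduce to the uniform moment estimate of Lemma \ref{lem:moments_add_One} and to the observation that $m r_n^d\to r$ throughout the window, so no genuinely new probabilistic input is required beyond the binomial concentration inequality and the Poisson concentration of $N_n$. A cheaper route using only the variance bound of Lemma \ref{lem:weak_law_Poisson} and Chebyshev would give the Poisson strong law merely along $n_j=j^2$, and filling the gaps is awkward because $r_n$ varies with $n$; de-Poissonization is cleaner precisely because it holds the radius fixed.
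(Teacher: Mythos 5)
Your binomial half is exactly the paper's proof: Theorem \ref{thm:conc_ineq_Binomial} with $a=1$, summability, and Borel--Cantelli. For the Poisson half, both you and the paper start from the same coupling $\cP_n=\{X_1,\dots,X_{N_n}\}$, $N_n\sim\Poi(n)$, but then diverge genuinely. The paper stays pathwise: it bounds $|\beta_k(\C(\cP_n,r_n))-\beta_k(\C(\cX_n,r_n))|$ by the number of $k$- and $(k+1)$-simplices with a vertex in $\cP_n\bigtriangleup\cX_n$ (Lemma \ref{lem:MV_complex_bounds}), notes that by nestedness of the coupling these counts equal $|S_j(\cP_n,r_n)-S_j(\cX_n,r_n)|$ for $j=k,k+1$, and kills them with the a.s.\ laws of large numbers for simplicial counts from \cite{Penrose07a}; the Poisson strong law then rides on the binomial one. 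You instead de-Poissonize distributionally: condition on $N_n$, use the Chernoff tail of $N_n$, Lipschitz continuity of $g(m)=\EXP{\beta_k(\C(\cX_m,r_n))}$, and the binomial concentration inequality applied at the mismatched radius $r_n$ uniformly over $|m-n|\le n^{3/4}$. What your route buys: it is self-contained (no appeal to external limit laws for simplicial counts), it yields exponential concentration for the Poisson functional --- strictly more than the stated strong law --- and it handles the shift of centering explicitly, a point the paper leaves implicit (the paper never spells out why $\EXP{\beta_k(\C(\cP_n,r_n))}$ and $\EXP{\beta_k(\C(\cX_n,r_n))}$ differ by $o(n)$). What it costs is precisely the obstacle you flag yourself: Theorem \ref{thm:conc_ineq_Binomial} is stated for a single sequence whose radius is tied to its number of points, so you must re-run its proof for the two-index family $(\cX_m,r_n)$; this is routine, since every estimate there (the occupancy bound on cubes of side $r_n$, the martingale-difference bounds, Chalker's inequality) uses only $m r_n^d=O(1)$ over the window, but it does have to be written out. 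One small repair: your estimate $|\EXP{\beta_k(\C(\cP_n,r_n))}-g(n)|\le C_0\EXP{|N_n-n|}$ invokes the Lipschitz constant for $g$ outside the range of $m$ where you justified it; split instead on $\{|N_n-n|\le n^{3/4}\}$ and, on the complement, use the crude bound $g(m)\le m^{k+1}$ against the exponentially small tail of $N_n$.
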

	
	\begin{proof}
		By choosing $a = 1$ in Theorem \ref{thm:conc_ineq_Binomial} and summing over $n$, we have, for all $\ep > 0$,
		\[ \sum_{n \geq 1} \pr{n^{-1}{\left|\beta_k(\C(\cX_n,r_n) - \EXP{\beta_k(\C(\cX_n,r_n)}\right|}       \geq \ep} \ <\  \infty.\]
		The Borel-Cantelli lemma immediately implies the strong law for $\beta_k(\C(\cX_n,r_n))$.
		
		Turning to the Poisson case, we shall use the standard coupling of $\cP_n$ to $\cX_n$ to complete the proof of the theorem. Let $N_n$ be a Poisson random variable with mean $n$. Then, by choosing $\cP_n = \{X_1,\ldots,X_{N_n}\}$, we have coupled it with $\cX_n = \{X_1,\ldots,X_n\}$, $n\geq 1$.  By Lemma \ref{lem:MV_complex_bounds}, we have that
		\beqq
		&&|\beta_k(\C(\cP_n,r_n) - \beta_k(\C(\cX_n,r_n)|
		\\ &&\qquad\qquad
		\leq\ S_k(\cP_n \cup \cX_n,r_n ; \cP_n \bigtriangleup \cX_n) \ +\  S_{k+1}(\cP_n \cup \cX_n,r_n ; \cP_n \bigtriangleup \cX_n) .
		\eeqq

		Now note that,
		\[ S_k(\cP_n \bigcup \cX_n,r_n ; \cP_n \bigtriangleup \cX_n) \ =\  |S_k(\cP_n ,r_n) - S_k(\cX_n ,r_n)|, \]
		with a similar inequality holding for $S_{k+1}()$. From \cite[Theorem 2.2]{Penrose07a} and the remarks following that result, we have that there exists a constant $\widehat{S}_k(f) \in (0,\infty)$ such that, with probability one,
		\[ \lim_{n \to \infty} \frac{S_k(\cX_n ,r_n)}{n} \ =\
		\lim_{n \to \infty} \frac{S_k(\cP_n ,r_n)}{n} \ =\  \widehat{S}_k(f) . \]
		A similar limit law holds true for $S_{k+1}$,  and so we have that, with probability one,
		\[ \lim_{n \to \infty}\frac{S_k(\cP_n \cup \cX_n,r_n ; \cP_n \bigtriangleup \cX_n)}{n} \ =\  \lim_{n \to \infty}\frac{S_{k+1}(\cP_n \cup \cX_n,r_n ; \cP_n \bigtriangleup \cX_n)}{n}
		\ =\  0 .\]
		Thus,
		\[ \frac{|\beta_k(\C(\cP_n,r_n) - \beta_k(\C(\cX_n,r_n)|}{n} \ \stackrel{n \to \infty}{\rar}\  0, \]
		and  the strong law for $\beta_k(\C(\cP_n,r_n)$  follows.
	\end{proof}

	
	\subsection{Central Limit Theorem}
	\label{sec:clt}
	
	We have finally come to main result of this section: central limit theorems for Betti numbers. \remove{In doing so, we shall restrict ourselves somewhat further, and assume that the underlying density for the Possion and binomial processes is uniform on the unit cube in $\mR^d$.
		In this section we shall   slightly modify the basic setup,  so as to more readily apply general techniques on central limit theorems for geometric functionals of Poisson and binomial point processes.
		(cf.\  Appendix, Theorem \ref{thm:clt_Penrose01}, a version of
		\cite[Theorem 2.1]{Penrose01}.) }
	
	We start with some definitions  from percolation theory for the Boolean model on Poisson processes (\cite{Meester96}) needed for the proof of the Poisson central limit theorem.  Recall firstly that we say that a subset $A$ of $\mR^d$ \emph{percolates} if it contains  an unbounded connected component of $A$.
	
	Now let $\cP$ be a stationary Poisson point process on $\mR^d$ with unit intensity.
	(Unit intensity is for notational convenience only. The arguments of this section will work for any constant intensity.)
	\remove{Recall that the Boolean model was defined in Definition (\ref{defn:boolean}) and that the relevance of the Boolean model to the study of \Cech complexes stems from the nerve theorem (\cite[Theorem 10.7]{Bjorner95}) which implies that  $\cCB(\cX,r)$ and $\C(\cX,r)$ are homotopy equivalent for any finite set $\cX$.}
	We define the critical (percolation)  radii for $\cP$ as follows:
	\[ r_c(\cP)\ \definedas \ \inf \{ r\: \pr{\mbox{$C(\cP,r)$ percolates}} > 0 \}, \]
	and,
	\[ r^*_c(\cP)\ \definedas \ \sup \{ r\: \pr{\mbox{$\mR^d \setminus C(\cP,r)$ percolates}} > 0 \} .\]
	By Kolmogorov's zero-one law, it is easy to see that the both of the probabilities inside the {infimum and supremum}  here are  either $0$ or $1$. The first critical radius is called the \emph{critical radius for percolation of the occupied component} and the second is the \emph{critical radius for percolation of the vacant component}.
	
	We define the \emph{interval of co-existence},  $I_d(\cP)$, for which unbounded components of both the Boolean model and its complement  co-exist, as follows:
	\beqq
	I_d(\cP) \  =\  \begin{cases}   (r_c,r_c^*]
		&\text{if $\pr{\mbox{$C(\cP,r_c)$ percolates}} = 0$,}\\
		[r_c,r_c^*]   &\text{otherwise}.
	\end{cases}
	\eeqq
	From \cite[Theorem 4.4 and Theorem 4.5]{Meester96}, we know that $I_2(\cP)= \emptyset$ and from \cite[Theorem 1]{Sarkar97} we know that $I_d(\cP) \neq \emptyset$ for $d \geq 3$. In high dimensions, it is known that $r_c \notin I_d(\cP)$ (cf.\ \cite{Tanemura96}).
	
	We now need a little additional notation.  Let $\{B_n\}_{n \geq 1}$ be a sequence of bounded Borel subsets in $\mR^d$ satisfying the following four conditions:
	\begin{enumerate}
		\item[(A)] $|B_n| = {n}$, for all $n \geq 1$.
		\item[(B)] $\bigcup_{n \geq 1}\bigcap_{m \geq n} B_m \ =\  \mR^d$.
		\item[(C)] $ {|(\partial B_n)^{(r)}|}/{n} \to 0$, for all $r > 0$.
		\item[(D)] There exists a constant $b_1$ such that diam$(B_n) \leq b_1n^{b_1}$, where
		diam$(B)$ is the diameter of $B$.
	\end{enumerate}

	In a moment we shall state and  prove a central limit theorem for the sequences of the form $\beta_k(\C(\cP\cap B_n,r))$, when the $B_n$ are as above. Setting up the central limit theorem for the binomial case requires a little more notation.

	In particular, we write $\cU_n$ to denote  the point process obtained by choosing $n$ points uniformly in $B_n$, and  call this
	the {\it  extended binomial point process}.   This is a natural binomial counterpart to the Poisson point process $\cP \cap B_n$.
	
	We finally have all that we need to formulate the main central limit theorem.

	\begin{theorem}
		\label{thm:clt_Poisson_Binomial}
		Let $\{B_n\}$ be a sequence of sets in $\mR^d$ satisfying conditions (A)--(D) above, and let
		$\cP$ and $\cU_n, \ n \geq 1$, respectively,  be the unit intensity Poisson process and the extended binomial point process described above.  Take $k \in \{1,\ldots,d-1\}$ and $r \in (0,\infty)$. Then there exists a constant $\sigma^2 > 0$ such that, as $n \to \infty$,
		\[ n^{-1}\VAR{\beta_k(\C(\cP\cap B_n,r)} \ \to \ \sigma^2, \]
		and
		\[ n^{-1/2}\left(\beta_k(\C(\cP\cap B_n,r)) - \EXP{\beta_k(\C(\cP\cap B_n,r))}\right)\  \Rar\  N(0,\sigma^2). \]
		Furthermore, for $r \notin I_d(\cP)$, there exists a $\tau^2$ with $0 < \tau^2 \leq \sigma^2$ such that
		\[ n^{-1}\VAR{\beta_k(\C(\cU_n,r)} \to \tau^2 , \]
		and
		\[ n^{-1/2}\left(\beta_k(\C(\cU_n,r)) - \EXP{\beta_k(\C(\cU_n,r))}\right) \ \Rar\  N(0,\tau^2). \]
		The constants $\sigma^2$ and $\tau^2$ are independent of the sequence $\{B_n\}$.
	\end{theorem}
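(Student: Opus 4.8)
The plan is to derive both central limit theorems from the general limit theory for stabilizing functionals of Poisson and binomial point processes (cf.\ \cite{Penrose01}), applied to the functional $\beta_k(\C(\cdot,r))$. That theory reduces the problem to three ingredients: a uniform moment bound on the add-one cost, a stabilization property for the add-one cost, and a non-degeneracy argument for the limiting variance. The fourth-moment bound is already supplied by Lemma \ref{lem:moments_add_One}, so the real work lies in establishing stabilization --- \emph{weak} stabilization suffices in the Poisson case, while the binomial (de-Poissonized) theorem requires the stronger notion of \emph{strong} stabilization, and it is precisely here that the restriction $r\notin I_d(\cP)$ will enter.

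For \emph{weak stabilization} I would analyze the add-one cost at the origin, $D_O\beta_k(\C(\cP\cap B_O(\rho),r))$, through the Mayer--Vietoris formula of Lemma \ref{lem:MV_complexes}. Writing $\cK_2$ for the star of $O$ (the cone consisting of all new simplices containing $O$, which is contractible) and $\cK_1=\C(\cP\cap B_O(\rho),r)$, the intersection $\cL=\cK_1\cap\cK_2$ is supported on $\cP\cap B_O(2r)$ and hence has bounded, \emph{local} homology. Part (2) of Lemma \ref{lem:MV_complexes} then gives
\[
D_O\beta_k \;=\; \beta_k(\cK_2)+\beta(N_k)+\beta(N_{k-1})-\beta_k(\cL),
\]
in which every term except $\beta(N_k)=\beta(\ker\lambda_k)$ and $\beta(N_{k-1})$ is determined by the configuration near $O$. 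Since $\cK_2$ is contractible, for $j\ge 1$ a class of $H_j(\cL)$ lies in $N_j$ exactly when it bounds in $\cK_1$; as $\rho$ increases the nested complexes $\cK_1$ can only \emph{fill} more such local cycles, so $\beta(N_j)$ is monotone non-decreasing in $\rho$ and bounded by the finite $\dim H_j(\cL)$. Consequently $\beta(N_k)$, $\beta(N_{k-1})$, and therefore $D_O\beta_k$, converge almost surely as $\rho\to\infty$. This is the almost sure limit required by weak stabilization, and together with the moment bound it yields, via the Poisson stabilization CLT, the convergence $n^{-1}\VAR{\beta_k(\C(\cP\cap B_n,r))}\to\sigma^2$ and the asymptotic normality; condition (C) guarantees that boundary simplices are negligible and that $\sigma^2$ is the whole-space quantity determined by the limiting add-one cost, hence independent of $\{B_n\}$.

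The main obstacle is \emph{strong stabilization}, needed for the binomial statement. The monotonicity argument above does \emph{not} give a finite radius $R$ beyond which the add-one cost is insensitive to arbitrary changes of the configuration: a local $k$-cycle may be filled by a chain that wanders arbitrarily far, so whether it bounds can depend on points at any distance --- this is the global nature of homology, and it genuinely obstructs strong stabilization inside the coexistence interval $I_d(\cP)$. For $r\notin I_d(\cP)$ I would remove the obstruction using continuum percolation: if $r<r_c(\cP)$ the occupied components are a.s.\ bounded, so any cycle and any chain filling it are confined to the a.s.\ finite component meeting $B_O(2r)$; if $r>r_c^*(\cP)$ the \emph{vacant} region does not percolate, and passing to the complement via the Alexander-duality identity of Remark \ref{rem:duality} confines the relevant $(d-k-1)$-dimensional features to bounded vacant components. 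In either case the bounding status of every local cycle is determined by $\cP$ inside an a.s.\ finite ball, furnishing a finite radius of stabilization; the binomial stabilization CLT of \cite{Penrose01} then gives $n^{-1}\VAR{\beta_k(\C(\cU_n,r))}\to\tau^2$ with asymptotic normality, and the de-Poissonization correction in that theory produces $\tau^2\le\sigma^2$, with $\tau^2$ again independent of $\{B_n\}$.

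Finally, positivity of the limiting variances cannot come from stabilization alone, and I would obtain it from the chaos-expansion lower bound of \cite{Last14} (Lemma \ref{lem:lower_bound_variance}). The sphere construction in the proof of Lemma \ref{lem:var_lower} was deliberately recorded for all $1\le k\le d-1$: by \eqref{eqn:lower_bound_add_Onea}, with probability bounded below one can plant an $S^k$-type configuration near a point $x$ so that adding $x$ destroys a $k$-cycle and strictly decreases $\beta_k$, irrespective of the outside configuration. This makes the conditional expectation of the add-one cost uniformly bounded away from zero on a set of positive probability, verifying the hypothesis \eqref{condn:variance_lower_bound} of Lemma \ref{lem:lower_bound_variance} and hence $\sigma^2>0$; the same construction, now operative within the stabilizing regime, yields $\tau^2>0$. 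Assembling the weak-stabilization CLT (Poisson), the strong-stabilization CLT (binomial, $r\notin I_d$), and these variance lower bounds completes the proof.
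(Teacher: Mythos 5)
Most of your architecture coincides with the paper's: weak stabilization via the Mayer--Vietoris formula with monotone, bounded kernel ranks (your star-of-$O$ decomposition is a cosmetic variant of the paper's choice $\cK''=\C((\cP\cap B_O(2r))\cup\{0\},r)$; contractibility just kills one term), strong stabilization for $r\notin I_d(\cP)$ via bounded occupied components when $r<r_c$ and Alexander duality when $r>r_c^*$, and Penrose's stabilization theorems to conclude. One secondary omission: weak stabilization in the sense required by Theorem \ref{thm:clt_Penrose01} (see \eqref{weakstab-rob}) demands a.s.\ convergence of $(D_O\beta_k^r)(\cP\cap A_n)$ along \emph{every} $\mathfrak{B}$-valued sequence $A_n$ growing to $\mR^d$, not only along balls $B_O(\rho)$; such sequences need not be nested, so your monotonicity argument does not apply to them directly. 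The paper closes this with a sandwich argument, $\cL\subset\cK\subset\cK_n'\subset\cK_n^*$, trapping the kernel rank $\beta(M_k^n)$ between two quantities both equal to $\beta(N_k^{R+1})$ for large $n$.

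The genuine gap is your positivity step. Lemma \ref{lem:lower_bound_variance} requires hypothesis \eqref{condn:variance_lower_bound}, a lower bound on $\bigl|\EXP{D_x\beta_k^n(\cP_n\cup\{y_1,\ldots,y_m\})}\bigr|$, i.e.\ on the \emph{conditional expectation} of the add-one cost. The sphere construction \eqref{eqn:lower_bound_add_Onea} only gives $D_x\beta_k\leq -1$ with probability at least $c$; on the complementary event the add-one cost can be positive, and without sign control the expectation may vanish by cancellation. The required sign control is precisely \eqref{eqn:lower_bound_add_Oneb}, which the paper can prove only for $k=d-1$, via the duality of Remark \ref{rem:duality} --- this is exactly why Lemma \ref{lem:var_lower} is restricted to the top Betti number. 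So your route to $\sigma^2>0$ fails for $1\leq k\leq d-2$. It also cannot deliver $\tau^2>0$: the chaos-expansion bound of \cite{Last14} is a Poisson tool and says nothing about the binomial process $\cU_n$; your sentence claiming ``the same construction yields $\tau^2>0$'' has no supporting mechanism. The paper sidesteps both problems by never invoking Lemma \ref{lem:lower_bound_variance} here: it proves the stabilization limit $D_{\beta_k}(\infty)$ is \emph{non-degenerate} --- nonzero with positive probability (sphere construction plus dominated convergence, giving $\EXP{|D_{\beta_k}(\infty)|}>0$) and zero with positive probability (the void probability $e^{-\omega_d(2r)^d}$) --- and then applies the last clause of Theorem \ref{thm:clt_Penrose01}, which converts non-degeneracy of $D_{\beta_k}(\infty)$ into $\tau^2>0$ and hence $\sigma^2>0$. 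Non-degeneracy requires no control on the sign or the mean of the add-one cost, and that is what makes the argument work for every $k\in\{1,\ldots,d-1\}$; you should replace your final paragraph with this argument.
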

	\begin{remark}
		\label{rem:top_homology}
		The condition  $r \notin I_d(\cP)$, needed for the binomial central limit theorem, is rather irritating, and we are not sure if it is necessary or an artefact of the proof. It is definitely not needed for the case
		$k=d-1$. To see this, note that from the duality argument of Remark \ref{rem:duality}, we have that
		\beqq
		\beta_{d-1}(\C(\cP \cap B_n,r)) \ =\  \beta_0(\mR^d \setminus \C(\cP\cap B_n,r)) - 1.
		\eeqq
		However, $\mR^d \setminus \C(\cP\cap B_n,r)$ is nothing but the vacant component of the Boolean model, and central limit theorems for {$\beta_0(\mR^d \setminus \C(\cX\cap B_n,r))$} for both Poisson and binomial point processes are given in \cite[p1040]{Penrose01}  for all $r \in (0,\infty)$. By the above duality arguments, this proves both the central limit theorems  of Theorem \ref{thm:clt_Poisson_Binomial}, when $k=d-1$,   and without the requirement that $r \notin I_d(\cP)$.
	\end{remark}
	\begin{proof}
		Since the theorem is somewhat of an omnibus collection of results, the proof is rather long. Thus we shall break it up into signposted segments.
		
		\vskip0.1truein
		\noindent{\it I. Poisson central limit theorem:}\ \
		Since $\beta_k(\C(\,\cdot\, ,r))$ is a translation invariant functional over finite subsets of $\mR^d$, we  need only check that {Conditions 3} and 4 in Theorem \ref{thm:clt_Penrose01}, along with the weak stabilization of \eqref{weakstab-rob},  hold in order to prove the convergence of variances and the asymptotic normality in the Poisson case.
		The strict positivity of $\sigma^2$ will follow from that of $\tau^2$, to be proven below.
		
		We treat each of the three necessary conditions separately.
		
		\vskip0.1truein
		\noindent{\it (i) Weak stabilization:} Firstly, we shall show that there exist a.s.\ finite random variables $D_{\beta_k}(\infty),$ and $R$ such that, for all $\rho > R$,
		\begin{equation}
		\label{eqn:weak_stabilizing_spheres}
		(D_O\beta_k^r)(\cP \cap B_O(\rho)) \ =\  D_{\beta_k}(\infty),
		\end{equation}
		where  $\beta_k^r(\cX)\definedas\beta_k(\C(\cX,r))$ for any finite point-set $\cX$. Then, we shall complete the proof of weak stabilization by showing the above for any $\mathfrak{B}$-valued sequence of sets $A_n$ tending to $\mR^d$. (See the paragraphs preceding Theorem  \ref{thm:clt_Penrose01} for the definition of $\mathfrak{B}$.)

		For any $\rho>2r$, define the simplicial complexes
		\beqq
		\cK_{\rho} &=& \C((\cP \cap B_O(\rho)) \cup \{0\},r), \\
		\cK_{\rho}' &=& \C(\cP \cap B_O(\rho),r),\\
		\cK'' &=& \C((\cP \cap B_O(2r)) \cup \{0\},r),\\
		\cL &=& \cK_{\rho}' \cap \cK'',
		\eeqq
		and note that $\cK_{\rho}=\cK_{\rho}'\cup\cK''$ and that, as implied by the notation, $\cL$ and $\cK''$ do not  depend on $\rho$.
		
		From the second part of Lemma \ref{lem:MV_complexes}, we have that
		\beq
		\no
		(D_O\beta_k^r)(\cP \cap B_O(\rho)) &=& \beta_k(\cK_{\rho})-\beta_k(\cK_{\rho}')
		\\ &=& \beta_k(\cK'')+ \beta(N_k^{\rho}) + \beta(N_{k-1}^{\rho})-\beta_k(\cL),
		\label{eqn:MV_spheres}\eeq
		where  $N_k^{\rho}$ is the kernel of the induced homomorphism
		\[ \lam_k^{\rho}\: H_k(\cL)\  \to\ H_k(\cK_{\rho}') \oplus H_k(\cK'')\]
		Hence, all that remains is to show that $\beta(N_j^{\rho})$, $j=k,k-1$, remain  unchanged as $\rho$
		increases beyond some random variable $R$. Since these variables  are integer valued, it suffices to show that they are increasing and bounded to prove (\ref{eqn:weak_stabilizing_spheres}). We shall do this  for $\beta(N_k^{\rho})$. The same proof also works for $\beta(N_{k-1}^{\rho})$.
		
		The boundedness is immediate,  since
		\beqq
		\beta(N_k^{\rho})\ \leq \  \beta_k(\cL)\  \leq\  \Phi(B_O(2r))^{k+1}\  < \ \infty, \quad \text{a.s.}
		\eeqq
		All that remains to show is that $\beta(N_k^{\rho})$ is increasing. Let $\rho_1,\rho_2$ be such that $2r < \rho_1 \leq \rho_2$. We need to show that $\beta(N_k^{\rho_1}) \leq \beta(N_k^{\rho_2})$.
		
		Since $\cL \subset \cK_{\rho_1} \subset \cK_{\rho_2}$, we have the  corresponding simplicial maps defined by the respective inclusions (see Section \ref{sec:topology}) and hence the following homomorphisms:
		\[  H_k(\cL) \ \stackrel{\lam_k^{\rho_1}}{\to}\  H_k(\cK_{\rho_1}') \oplus H_k(\cK'')
		\  \stackrel{\eta}{\to}\  H_k(\cK_{\rho_2}') \oplus H_k(\cK''). \]
		Also, by the functoriality of homology, $\lam_k^{\rho_2} = \eta \circ \lam_k^{\rho_1}$. Since $\ker \, \eta \subset \ker \, \eta' \circ \eta$ for any two homomorphisms $\eta,\eta'$, we have that
		\begin{equation}
		\label{eqn:increasing_kernel}
		\beta(N_k^{\rho_1}) \ =\  \beta(\ker \, \lam_k^{\rho_1})\  \leq\  \beta(\ker \, \eta \circ \lam_k^{\rho_1}) \  =\   \beta(N_k^{\rho_2}).
		\end{equation}
		This proves that $\beta(N_k^{\rho})$ is increasing in $\rho$, as, similarly, is $\beta(N_{k-1}^{\rho})$. Combining the convergence of $\beta(N_j^{\rho})$, $j=k,k-1$, with (\ref{eqn:MV_spheres}) gives (\ref{eqn:weak_stabilizing_spheres}).
		
		Now, let $A_n$ be a $\mathfrak{B}$-valued sequence of sets tending to $\mR^d$. To complete the proof of weak stabilization, we need to show that there exists an integer valued random variable $N$ such that for all $n > N$,
		\begin{equation}
		\label{eqn:weak_stabilizing}
		(D_O\beta_k^r)(\cP \cap A_n)\ =\  D_{\beta_k}(\infty),
		\end{equation}
		where, as before, $\beta_k^r(\cX)\definedas\beta_k(\C(\cX,r))$ for any finite point-set $\cX$. Firstly, choose $R$ as in (\ref{eqn:weak_stabilizing_spheres}) and WLOG assume $R>2r$. In particular, this implies that $\beta(N_j^{\rho})$ remains constant for $\rho > R$ for $j = k-1,k$. Since $\cP \cap B_O(R + 1)$ is a.s.\ finite and $\bigcup_{n \geq 1}\bigcap_{m \geq n}A_m = \mR^d$, there exists an a.s.\  finite random variable $N^*$ such that
		\[\{0\} \cup \cP \cap B_O(R + 1) \subset \bigcap_{m \ \geq\   N^*} A_m.\]
		Hence, $\{0\} \cup  \cP \cap B_O(R + 1) \subset A_n$ for all $n > N^*$. Let $n > N^*$,  and note that since $A_n\in \mathfrak{B}$, diam$(A_n)<\infty$ and so we can choose $R_n<\infty$ such that $A_n \subset B_O(R_n)$. Define the simplicial complexes
		\beqq
		\cK &=& \C((\cP \cap B_O(R + 1)) \cup \{0\},r), \\
		\cK_n &=& \C((\cP \cap A_n) \cup \{0\},r), \\
		\cK^*_n &=& \C((\cP \cap B_O(R_n)) \cup \{0\},r), \\
		\cK_n' &=& \C(\cP \cap A_n,r),\\
		\cK'' &=& \C((\cP \cap B_O(2r)) \cup \{0\},r),\\
		\cL &=& \cK_n' \cap \cK'',
		\eeqq
		where, again, $\cL$ and $\cK''$ do not depend of $n$. Now applying the second part of Lemma \ref{lem:MV_complexes}, since $\cK_n = \cK_n' \cup \cK''$,we have that
		\beq
		\no
		(D_O\beta_k^r)(\cP \cap A_n) &=& \beta_k(\cK_n)-\beta_k(\cK_n')\\
		&=& \beta_k(\cK'')+ \beta(M_k^n) + \beta(M_{k-1}^n)-\beta_k(\cL),
		\label{eqn:MV_sets}
		\eeq
		where $M_j^{n},j=k,k-1$, is the kernel of the induced homomorphism
		\[\gamma_j^n\:  H_j(\cL) \ \to\  H_j(\cK_n') \oplus H_j(\cK'').\]
		Again, to prove (\ref{eqn:weak_stabilizing}), all we need to show is that
		$\beta(M_j^n)$, $j=k,k-1$, remain constant for any $n > N^*$.
		
		To see this, start by noting that, by the choice of $n,R,R_n$,  we have the following inclusions:
		\beqq
		\cL \subset \cK \subset \cK'_n \subset \cK^*_n.
		\eeqq
		Hence the corresponding simplicial maps give rise to the following induced homomorphisms:
		\[ H_k(\cL) \ \stackrel{\eta_1}{\to}\  H_k(\cK) \oplus H_k(\cK'') \ \stackrel{\eta_2}{\to}\  H_k(\cK'_n) \oplus H_k(\cK'')\  \stackrel{\eta_3}{\to}\ H_k(\cK^*_n) \oplus H_k(\cK''). \]
		Note that $\gamma_k^n = \eta_2 \circ \eta_1$. Also, from the choice of $R, \cK$ and $\cK_n^*$, we have that
		\[ \beta(N_k^{R+1}) \ =\  \beta(\ker \, \eta_1)\  =\  \beta(\ker \, \eta_3 \circ \eta_2 \circ \eta_1),\]
		where $N_k^\rho$ for any $\rho \geq 2r$ was defined after (\ref{eqn:MV_spheres}). Now, by an argument similar to that used to obtain (\ref{eqn:increasing_kernel}), we have the following inequality:
		\[ \beta(\ker \, \eta_1)\  \leq\  \beta(M_k^n) \ =\  \beta(\ker \, \eta_2 \circ \eta_1)\  \leq \ \beta(\ker \, \eta_3 \circ \eta_2 \circ \eta_1).\]
		Thus, we have that $\beta(M_k^n) = \beta(N_k^{R+1})$ for $n > N^*$ with a corresponding result holding for
		$\beta(M_{k-1}^n)$.   Using this in (\ref{eqn:MV_sets}) proves (\ref{eqn:weak_stabilizing}),  and so we have shown that $\beta_k(\C(\cP,r))$ is weakly stabilizing on $\cP$ for all $r \geq 0$.
		
		\vskip0.1truein
		\noindent{\it (ii)  Uniformly  bounded moments:} Via a  calculation similar  to that in Lemma \ref{lem:moments_add_One}, we obtain that, for $m \in [{|A|}/{2},{3|A|}/{2}]$,
		\[ \left|(D_O\beta_k)(\cU_{m,A})\right|
		\ \leq\  2\left[\Bin \left(m,\frac{\omega_dr^d}{|A|}\right)\right]^{k+1}\  \leq \
		2\left[\Bin\left(\left\lceil\frac{3|A|}{2}\right\rceil,\frac{\omega_dr^d}{|A|}\right)\right]^{k+1},\]
		where the inequalities here are to be read as `bounded by a random variable with distribution'.
		Thus, the  uniformly bounded fourth moments for the rightmost binomial random variable implies  uniformly bounded fourth moments for the add-one cost function.
		
		\vskip0.1truein
		\noindent{\it (iii) Polynomial boundedness:} This follows easily from the relation
		\beqq
		\beta_k(\C(\cX,r)) \ \leq\  S_k(\cX,r)\  \leq\  \cX(\mR^d)^{k+1}.
		\eeqq

		From Theorem \ref{thm:clt_Penrose01} and the remarks below it, the above three items suffice to prove the central limit theorem for the Poisson point processes.
		\vskip0.1truein
		\noindent{\it II. Binomial central limit theorem:} \ \
		Given the bounds proven in the previous part of the proof, all that remains to complete the central limit theorem for
		the binomial case is to prove the strong stabilization of $D_O\beta_k$ for $r \notin I_d$.
		
		What we need to show is that there exist a.s.\  finite random variables $\widehat{D}_{\beta_k}(\infty),S$ such that for all finite $\cX \subset B_O(S)^c$,
		\[(D_O\beta_k)((\cP \cap B_O(S)) \cup \cX)\  =\  \widehat{D}_{\beta_k}(\infty). \]
		We shall handle the two case of $r < r_c$ and $r > r_c^*$ separately.

		Assume that $r < r_c$,  or $r \leq r_c$ if $r_c \notin I_d$.  In this case, since $\cCB(\cP,r)$ does not percolate, there are only finitely many components of $\cCB(\cP,r)$ that intersect $B_O(r)$ and all of them are a.s.\ bounded. Let $C_1,\ldots, C_M$ be an enumeration of the components for some a.s.\  finite  $M>0$. (We  exclude the trivial but possible case of $M = 0$.) Further, $C_1,\ldots, C_M$ are a.s.\ bounded subsets and so $C = \bigcup_{i=1}^MC_i$ is also a.s.\ bounded. Thus, in this case we can choose an a.s.\ finite $S$ such that $d(x,C) > 3r$ for all $x \notin B_O(S)$. This implies that for any locally finite $\cX \subset B_O(S)^c$ we have $C \cap \cCB(\cX,r) = \emptyset$. Thus,  for any finite $\cX \subset [B_O(S)]^c$,
		\begin{equation}
		\label{eqn:stabilization_subcritical_Occupied}
		(D_O\beta_k)((\cP \cap B_O(S)) \cup \cX) \ =\  \beta_k(C \cup B_O(r)) - \beta_k(C),
		\end{equation}
		i.e.\ $\beta_k$ strongly stabilizes with stabilization radius $S$ and
		\beqq
		\widehat{D}_{\beta_k}(\infty)\ \definedas \ \beta_k(C \cup B_O(r)) - \beta_k(C).
		\eeqq
		
		Now assume that $r > r_c^*$.  Since $\mR^d \setminus \cCB(\cP,r)$ has only finitely many components that intersect $B_O(r)$,  duality arguments, as in Remark \ref{rem:top_homology},  establish strong stabilization for $\beta_k(\cP,r)$.
		
		\remove{Choose $\ep > 0$ such that $r - \ep > r_c^*$. Denote by $V_r := \mR^d \setminus \cCB(\cP,r)$ and we know that there are finitely many components of $V_{r-\ep}$ that intersect $B_O(r)$. There exist a.s. finite $\mZ$-valued random variable $N$ such that $V_{1,r-\ep},\ldots,V_{N,r-\ep}$ are the components of $V_{r-\ep}$ that intersect $B_O(r)$.  We shall again exclude the trivial case of $N = 0$.
			
			Set $V_{O,r-\ep} := \bigcup_{i = 1}^NV_{i,r-\ep}$ and $U_{O,r-\ep} = V_{O,r-\ep} \cap \cCB(\cP,r)$. Since $V_{O,r-\ep}$ is a.s. bounded, there exists an a.s. finite random variable $S$ such that for all $|x| > S$, we have that $B_x(3r) \cap V_{O,r-\ep} = \emptyset$. Given a finite $\cX \subset (B_O(S))^c$, choose $n \geq S + 3r$ such that $\cCB((\cP \cap B_O(S)) \bigcup \cX,r) \subset B_O(n)$. $n$ is allowed to depend upon $\cX,S$. Define the following sets :
			\begin{eqnarray*}
				\cP^* &=& (\cP \cap B_O(S)) \bigcup \cX, \\
				V_{r,n} &=& B_O(n) \setminus \cCB(\cP^*,r-\ep), \\
				U_{r,n} &=& (V_{r,n} \setminus V_{O,r-\ep}) \cap \cCB(\cP^*,r).
			\end{eqnarray*}
			We have that the union of interiors of $V_{r,n}$ and $\cCB(\cP^*,r)$ cover $B_O(n)$. Thus, we can use Mayer-Vietoris sequence for Euclidean subsets (\cite[Pg. 149]{Hatcher02}) to obtain the following exact sequence for all $k \geq 1$ :
			\begin{eqnarray*}
				\ldots H_{k+1}(B_O(n)) \to H_k(V_{r,n} \cap \cCB(\cP^*,r)) & \to & H_k(\cCB(\cP^*,r)) \oplus H_k(V_{r,n}) \\
				& \to & H_k(B_O(n)) \to \ldots
			\end{eqnarray*}
			Since $H_k(B_O(n)) = 0$ for all $k \geq 1$, we have that
			\begin{eqnarray*}
				\beta_k(\cCB(\cP^*,r)) & = & \beta_k(V_{r,n} \cap \cCB(\cP^*,r)) - \beta_k(V_{r,n}) \\
				& = & \beta_k(U_{O,r-\ep}) + \beta_k(U_{r,n}) - \beta_k(V_{r,n} \setminus V_{O,r-\ep}) - \beta_k(V_{O,r-\ep}).
			\end{eqnarray*}
			To complete the proof of stabilization, we also need to consider the point process $\cP^* \bigcup \{0\}$ and we shall denote the corresponding sets by $V^0_{O,r-\ep},U^0_{O,r-\ep},U^0_{r,n}$ and $V^0_{r-\ep,n}$ respectively. These sets are defined as above but by considering $\cCB(\cP^* \bigcup \{O\},r)$ instead of $\cCB(\cP^*,r)$. Notice that by the choice of $V_{O,r-\ep}$ and $U_{O,r-\ep}$, the addition of $0$ does not affect $U_{r,n}$ and $V_{r,n} \setminus V_{O,r-\ep}$ i.e., $U^0_{r,n} = U_{r,n}$, $V^0_{r,n} \setminus V^0_{O,r-\ep} = V_{r,n} \setminus V_{O,r-\ep}.$ Thus, we have that
			\begin{equation}
			\label{eqn:stabilization_subcritical_vacant}
			(D_O\beta_k)(\cP^*) \ = \ \beta_k(U^0_{O,r-\ep}) - \beta_k(U_{O,r-\ep}) +  \beta_k(V_{O,r-\ep}) - \beta_k(V^0_{O,r-\ep}),
			\end{equation}
			i.e., $\beta_k$ strongly stabilizes with radius $S$ and
			\[ \widehat{D}_{\beta_k}(\infty)\ \definedas\  \beta_k(U^0_{O,r-\ep}) - \beta_k(U_{O,r-\ep}) +  \beta_k(V_{O,r-\ep}) - \beta_k(V^0_{O,r-\ep}). \]
		}

		Consequently,  (\ref{eqn:stabilization_subcritical_Occupied}) and  duality establishes strong stabilization of $\beta_k(\cC(\P,r))$ for $r \notin I_d$, and this  completes the proof of the central limit theorem for $\beta_k(\C(\cU_n,r))$.
		
		\vskip0.1truein
		\noindent{\it (III) Positivity of $\tau^2$:} \ \ All that remains is to show the strict positivity of $\tau^2$. By Theorem \ref{thm:clt_Penrose01}, it suffices to show that $D_{\beta_k}(\infty)$ is non-degenerate and this we shall do by using similar arguments to those we used to obtain (\ref{eqn:lower_bound_add_Onea}) and  (\ref{eqn:lower_bound_add_Oneb}).

		Write $\cP_n$ for  $\cP \cap B_O(n)$. We showed  in (\ref{eqn:weak_stabilizing_spheres}) that $|(D_O\beta_k)(\cP_n)| \convas |D_{\beta_k}(\infty)|$, and we have from Lemma \ref{lem:MV_complex_bounds} that, for $n$ large enough,
		\[ |(D_O\beta_k)(\cP_n)|\ \leq \ \sum_{j=k}^{k+1} S_j(\cP_n \cup \{0\},r ; \{0\}) \ \leq\ 2\cP(B_O(2r))^{k+1} .\]
		Since $\EXP{\cP(B_O(2r))^{k+1}} < \infty$, we can use the dominated convergence theorem to obtain that $\EXP{|(D_O\beta_k)(\cP_n)|} \to \EXP{|D_{\beta_k}(\infty)|}$ as $n \to \infty$.
		
		Choose $m$ (depending on $k,d$ only) as in the proof of variance lower bound in Lemma \ref{lem:weak_law_Poisson} (see (\ref{mkballs}) and (\ref{betak-rob}))  and define the set $B^*_m$ as  there. Setting $B^*_{r,m}= rB^*_m$, we have that $|B^*_{m,r}| = |B^*_m|r^{md} > 0$. Thus, for all $n \geq 5r$,
		\begin{eqnarray*}
			&&\EXP{|(D_O\beta_k)(\cP_n)|} \\ &&\qquad \geq \ \EXP{|(D_O\beta_k)(\cP_n)| \1_{\cP_n(B_O(2r))=m, \cP_n(B_O(4r)\setminus B_O(2r)) = 0 }} \no \\
			&&\qquad = \ \EXP{|(D_O\beta_k)(\cP_n)| \, \big| \, \cP_n(B_O(2r))=m, \cP_n(B_O(4r)\setminus B_O(2r)) = 0}  \no \\
			&&\qquad \qquad\times  \pr{\cP_n(B_O(2r))=m, \cP_n(B_O(4r)\setminus B_O(2r)) = 0} \no \\
			&& \qquad\geq\  \pr{\cP_n(B_O(2r))=m, \cP_n(B_O(4r)\setminus B_O(2r)) = 0}  \no \\
			&&\qquad \qquad\times\frac{1}{(\omega_d(2r)^d)^{m}}\int_{(y_1,\ldots,y_{m})\in B_O(2r)^{m}} |(D_O\beta_k)(\{y_1,\ldots,y_{m}\})|\md y_1 \ldots \md y_{m}  \no \\
			&&\qquad \geq \ \frac{|B^*_{m,r}|}{m!}  e^{-\omega_d(4r)^d)}\\
			&&\qquad  >\ 0.
		\end{eqnarray*}
		Thus,
		\[ \EXP{|D_{\beta_k}(\infty)|} \ =\  \lim_{n \to \infty}\EXP{|(D_O\beta_k)(\cP_n)|}\  >\  \frac{|B^*_{m,r}|}{m!}
		e^{-\omega_d(4r)^d)}. \]
		This shows that $\pr{D_{\beta_k}(\infty) \neq 0} > 0$. Thus, to complete the proof of non-degneracy of $D_{\beta_k}(\infty)$, it suffices show that $\pr{D_O\beta_{\infty} = 0} > 0$. %
		\beqq
		\pr{D_{\beta_k}(\infty) = 0} &=& \lim_{n \to \infty}\pr{(D_O\beta_k)(\cP_n) = 0}\\
		& \geq& \lim_{n \to \infty} \pr{\cP_n(B_O(2r))=0}\\
		&=& \exp(-\omega_d(2r)^d)
		\\ & >& 0 .
		\eeqq
		
	\end{proof}

	\noindent

	\section*{Acknowledgements}
	The work has benefitted from discussions with various people. On the topology side, we are thankful to Antonio Rieser and Primoz Skraba. On the probability side, Joseph Yukich answered many questions about the literature on sub-additive and stabilizing functionals, while Mathias Schulte explained to DY the variance lower bound technique of Lemma \ref{lem:lower_bound_variance}. Thanks are also due to his co-authors G\"unter Last and Giovanni Peccatti for sharing their results with us in advance of publication.
	
	\section{Appendices}
	\sec
	\label{sec:App}
	\subsection{Appendix A}
	\label{App:A}
	The following useful lemma needed for variance lower bounds is essentially a simplification of \cite[Theorem 5.2]{Last14} to our situation.
	\begin{lemma}
		\label{lem:lower_bound_variance}
		Let $n \geq 1$ and $\cP_n$ be the Poisson point process with density $nf$, where $f$ satisfies  \eqref{condn:density}. Let $F$ be a translation invariant functional on locally finite point-sets of $\mR^d$ such that $\EXP{F(\cP_n)^2} < \infty$. Assume that there exist  $m \in \mN$, a set $A \subset \text{\rm supp}(f)$, a finite set of points $z_1,\ldots, z_{m}$ and $r > 0$ with $A \oplus B_{z_i}(r) \subset \text{\rm supp}(f)$ for all $i \in \{1,\ldots,m\}$ such that for all $x \in A $ and $(x_1,\ldots,x_m) \in \prod_{i=1}^mB_{x+z_i}(r)$, we have that
		\begin{equation}
		\label{condn:variance_lower_bound}
		\big|\EXP{D_x \big( F(\cP_n \cup \{x_1\ldots,x_m\}) \big)} \big| \ \geq\  c,
		\end{equation}
		for some positive constant $c$. Then
		%
		%
		%
		\beq
		\label{condn:variance_lower_bound-A}
		\VAR{F(\cP_n)}\  \geq f^*n \frac{c^2(f_*(f^*)^{-1})^{m+1}}{8^{m+2}\cdot 4\cdot (m+1)!} \min_{j=1,\ldots,m+1} 2^{-d(m+1-j)}(w_df^*nr^d)^{j-1}|A|
		\eeq
	\end{lemma}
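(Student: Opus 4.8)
The plan is to run the Fock-space (chaos) lower bound for the variance of a square-integrable Poisson functional, which is the machinery behind \cite[Theorem 5.2]{Last14}, and then to specialise the region to the product of balls prescribed in the hypothesis. Write $\mu_n(dy)=nf(y)\,dy$ for the intensity measure of $\cP_n$, let $D^\ell_{y_1,\ldots,y_\ell}$ denote the $\ell$-fold iteration of the add-one operator of (\ref{eqn:add_One}), and set $T_\ell F(y_1,\ldots,y_\ell):=\EXP{D^\ell_{y_1,\ldots,y_\ell}F(\cP_n)}$. Since $\EXP{F(\cP_n)^2}<\infty$, the variance is given by the Fock-space identity $\VAR{F(\cP_n)}=\sum_{\ell\ge 1}\frac{1}{\ell!}\|T_\ell F\|^2_{L^2(\mu_n^{\otimes\ell})}$, all of whose summands are nonnegative; in particular
\[
\VAR{F(\cP_n)}\ \ge\ \sum_{\ell=1}^{m+1}\frac{1}{\ell!}\int \big(T_\ell F\big)^2\,d\mu_n^{\otimes\ell}.
\]
Keeping \emph{all} orders up to $m+1$, rather than only the top one, is what will eventually let the hypothesis feed into the bound.

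Next I would connect the hypothesis (\ref{condn:variance_lower_bound}) to these coefficients. Applying the elementary expansion $G(\cP\cup S)=\sum_{T\subseteq S}D^{|T|}_T G(\cP)$ with $G=D_xF$ and taking expectations gives, for every $x\in A$ and every $(x_1,\ldots,x_m)\in\prod_{i=1}^m B_{x+z_i}(r)$,
\[
\EXP{D_x\big(F(\cP_n\cup\{x_1,\ldots,x_m\})\big)}\ =\ \sum_{T\subseteq\{x_1,\ldots,x_m\}}T_{|T|+1}F\big(x,T\big),
\]
a signed sum of $2^m$ terms, the one with $T=\{x_1,\ldots,x_m\}$ being the top coefficient $T_{m+1}F(x,x_1,\ldots,x_m)$ and the others depending only on proper sub-tuples. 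By hypothesis the left-hand side has modulus at least $c$, so Cauchy--Schwarz in the form $(\sum_{i=1}^{2^m}a_i)^2\le 2^m\sum_{i=1}^{2^m}a_i^2$ yields, pointwise on the product region,
\[
\sum_{T\subseteq\{x_1,\ldots,x_m\}}\big(T_{|T|+1}F(x,T)\big)^2\ \ge\ \frac{c^2}{2^m}.
\]

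Integrating this inequality over the region $A\times\prod_{i=1}^m B_{x+z_i}(r)$ against $\mu_n^{\otimes(m+1)}$ then produces the desired bound. On the left, each term $(T_{|T|+1}F(x,T))^2$ does not depend on the coordinates $x_i$ with $i\notin T$, so integrating those out contributes factors $\mu_n(B_{x+z_i}(r))\le \omega_d f^* n r^d$, while the remaining integral over $x$ and $\{x_i:i\in T\}$ is, by the Fock-space identity again, at most $(|T|+1)!\,\VAR{F(\cP_n)}$. On the right, the support condition $A\oplus B_{z_i}(r)\subset\mathrm{supp}(f)$ lets me bound the density below by $f_*$, giving $\mu_n^{\otimes(m+1)}$-mass of the region at least $(f_* n)^{m+1}(\omega_d r^d)^m|A|$. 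Rearranging to isolate $\VAR{F(\cP_n)}$ and collecting the powers of $\omega_d f^* n r^d$ coming from the integrated-out coordinates of the various orders $j=|T|+1$ is exactly what forces the $\min_{j=1,\ldots,m+1}$ and the weights $(\omega_d f^* n r^d)^{j-1}$ in (\ref{condn:variance_lower_bound-A}); the ratio $(f_*/f^*)^{m+1}$ records the use of $f_*$ in the region and $f^*$ in the ball bounds, and the purely combinatorial constants $8^{m+2}$, $4$, $(m+1)!$ and the dyadic factors $2^{-d(m+1-j)}$ absorb the $2^m$ from Cauchy--Schwarz, the factorials $(|T|+1)!$, the $\binom{m}{|T|}$ choices of $T$, and a shrink-by-half disjointification of the balls.

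The genuinely delicate point is the middle step: the hypothesis controls only the full, all-orders combination $\EXP{D_x F(\cP_n\cup\{x_1,\ldots,x_m\})}$, whereas the variance sees the chaos orders one at a time, so the argument must guarantee that at least one order carries a definite share of the mass. The Cauchy--Schwarz spreading over the $2^m$ terms achieves this at the cost of the explicit constants, and the subsequent matching of powers of $\omega_d f^* n r^d$ across orders --- giving the minimum over $j$ --- is the only genuinely fiddly computation. Since all of this is precisely the content of \cite[Theorem 5.2]{Last14}, the most economical write-up simply invokes that theorem with the data $(m,A,z_1,\ldots,z_m,r,c)$ supplied here and simplifies its conclusion to (\ref{condn:variance_lower_bound-A}); the finiteness $\EXP{F(\cP_n)^2}<\infty$ needed to start the chaos expansion is, in the applications, supplied by Lemma \ref{lem:moments_add_One}.
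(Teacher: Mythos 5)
Your proposal is correct and takes essentially the same route as the paper: both arguments rest on the chaos-expansion variance lower bound of \cite[Theorem 5.2]{Last14}, applied over the region $\{(x,x_1,\ldots,x_m)\: x \in A,\ x_i \in B_{x+z_i}(r)\}$, using $f_*n$ to bound the intensity from below on that region and $f^*n$ to bound it from above when integrating out coordinates. The only difference is presentational: you unpack the Fock-space proof of that theorem inline (arriving at somewhat cleaner constants, which imply the stated bound a fortiori), whereas the paper invokes it as a black box and merely supplies the adaptations needed for the non-uniform density, namely the lower bound $\nu_n^{m+1}(U) \geq n f_*^{m+1}|A|(n\omega_d r^d)^m$ and the upper bounds on the slice measures $\nu_n^{m+1-|J|}$.
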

	\begin{proof}
		\cite[Theorem 5.3]{Last14} simplifies \cite[Theorem 5.2]{Last14} to the case of stationary Poisson point processes in Euclidean space. However, since we are dealing with Poisson point processes with non-uniform densities, we require a small change in the arguments there and shall describe this in a moment.  In any case,  the similarity of our lower bound \eqref{condn:variance_lower_bound-A} to that of \cite[Theorem 5.3]{Last14} is to be expected. In particular, if we set $f_* = f^* = \lam$, then \eqref{condn:variance_lower_bound-A} is exactly the variance lower bound in \cite[Theorem 5.3]{Last14} with $t$ there replaced by $\lam n$ here (although the  set $A$ and $r$ are different).
		
		Now, more specifically, set
		\[ U = \{(x,x+z_1+x_1,\ldots,x+z_m+x_m) \: x \in A, \ (x_1,\ldots,x_m) \in \prod_{i=1}^mB_O(r) \}. \]
		Note that this plays the role of $U$ as defined in \cite[Theorem 5.2]{Last14} and defining $g$ on $U$ as
		$$ g(y_1,\ldots,y_{m+1}) := \big|\EXP{F(\cP_n \cup \{y_1,y_2\ldots,y_{m+1}\})} - \EXP{F(\cP_n \cup \{y_2,\ldots,y_{m+1}\})} \big|,$$
		we have that $g > c/2$ Lebesgue a.e.\ on $U$. Setting $\nu_n(.)$ to be the intensity measure of the point process $\cP_n$, we have that
		$$f_*|.| \leq \nu_n(.) \leq f^* |. |,$$
		where we recall that $|.|$ stands for the Lebesgue measure on $\mR^d$. Using these bounds for $\nu_n(.)$, we firstly obtain that
		$$\nu_n^{m+1}(U) \geq nf_*^{m+1}|A|(nw_dr^d)^m.$$
		Secondly, let $\emptyset \neq J \subset \{1,\ldots,m+1\}$ and for $y = (y_1,\ldots,y_{m+1}) \in \mR^{d(m+1)}$, let $y_J$ be the components of $y$ with indices in $J$. If $y \in U$,  then $|y_i - y_j| \leq 2r$ for all $1 \leq i,j \leq m+1$ and so for any $y_J \in \mR^{d|J|}$, we have that
		\[ \nu^{m+1-|J|}_n \big( \{y_{J^c} \in \mR^{d(m+1-|J|)} : y \in U \} \big) \leq (2^df^*nw_dr^d)^{m+1-|J|}. \]

		Using the above upper and lower bounds in the proof of \cite[Theorem 5.3]{Last14} along with the a.e. lower bound for $g$ on $U$, \eqref{condn:variance_lower_bound-A} follows.
	\end{proof}

	\subsection{Appendix B}
	\label{App:B}
	
	We use the notation of Section \ref{sec:clt}, and consider a sequence $\{B_n\}$ of subsets of
	$\mR^d$ satisfying Conditions (A)--(D) there.
	
	Given such a sequence, let $\mathfrak{B}$ ($=\mathfrak{B}(\{B_n\})$) be the collection of all subsets $A$ of $\mR^d$ such that $A = B_n + x$ for some $B_n$ in the  sequence and some point $x \in \mR^d$.
	
	For a $A \in \mathfrak{B}$, we shall denote by $\cU_{m,A}$ the point process obtained by choosing $m$ points uniformly in $A$.  Then the extended binomial process $\cU_n$ of Section \ref{sec:clt} is equivalent to $\cU_{n,B_n}$ in the current notation.

	\begin{theorem}(\cite[Theorem 2.1]{Penrose01})
		\label{thm:clt_Penrose01}
		Let $H$ be a real-valued functional defined for all finite subsets of $\mR^d$ and satisfying the following four conditions:
		\begin{enumerate}
			\item \emph{Translation invariance:} $H(\cX + y) =  H(\cX)$ for all finite subsets $\cX$ and $y \in \mR^d$.
			\item \emph{Strong stabilization:} $H$ is called strongly stabilizing if there exist a.s.\  finite random variables $R$ (called the
			radius of stabilization for $H$) and $D_H(\infty)$ such that, with probability $1$,
			$$(D_OH)\big((\cP \cap B_O(R)) \cup A\big)\  =\  D_H(\infty)$$
			for all finite $A \subset B_O(R)^c$.
			\item \emph{Uniformly bounded moments:}
			\[ \sup_{A \in \mathfrak{B} ; \, 0 \in A} \, \sup_{m \in \left[{|A|}/{2},{3|A|}/{2}\right]}\EXP{\left[(D_OH)(\cU_{m,A})\right]^4}\  <\  \infty  .\]
			\item \emph{Polynomial boundedness:} There is a constant $b_2$ such that, for all finite subsets $\cX \subset \mR^d$,
			\beq
			|H(\cX)| \ \leq\ b_2\left[\text{\rm diam}(\cX) +| \cX|\right]^{b_2}.
			\eeq
		\end{enumerate}
		Then, there exist constants $\sigma^2,\tau^2$ with $0 \leq \tau^2 \leq \sigma^2$, such that, as $n \to \infty$,
		\[ n^{-1}\VAR{H(\cP\cap B_n)} \to \sigma^2, \qquad n^{-1}\VAR{H(\cU_n)} \to \tau^2 ,\]
		\[ n^{-1/2}\left[H(\cP\cap B_n) - \EXP{H(\cP\cap B_n)}\right]\  {\Rar}\  N(0,\sigma^2), \]
		and
		\[ n^{-1/2}\left[H(\cU_n)) - \EXP{H(\cU_n)}\right] \ {\Rar}\  N(0,\tau^2), \]
		where $\Rar$ denotes convergence in distribution. The constants $\sigma^2,\tau^2$ are independent of the choice of $B_n$. If $D_H(\infty)$ is non-degenerate, then $\tau^2 > 0$ and also $\sigma^2 > 0$. Further, if $\EXP{D_H(\infty)} \neq 0$, then $\tau^2 < \sigma^2$.
	\end{theorem}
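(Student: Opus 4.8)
The plan is to prove both limit laws via the martingale central limit theorem, writing each centred functional as a sum of martingale differences and then verifying a conditional-variance condition together with a Lindeberg-type bound. The four hypotheses play distinct roles: strong stabilization (Condition~2) localizes the martingale differences and produces the limiting constants $\sigma^2$ and $\tau^2$; the uniformly bounded fourth moments (Condition~3) yield the Lindeberg bound and control resampling effects; polynomial boundedness (Condition~4), together with the polynomial-diameter Condition~(D), renders the low-probability non-stabilized configurations negligible in $L^1$; and Condition~(C) guarantees that the boundary shell $(\partial B_n)^{(r)}$, where the translation-invariant local law fails, occupies a vanishing fraction of $B_n$.

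For the Poisson functional I would fix a tiling of $\mR^d$ by unit cubes indexed by $\mZ^d$, order them $Q_1,Q_2,\dots$ by increasing distance from the origin, and for each fixed $n$ set $\cF_i=\sigma(\cP\cap(Q_1\cup\cdots\cup Q_i))$, so that $H(\cP\cap B_n)-\EXP{H(\cP\cap B_n)}=\sum_i D_i$ with $D_i=\EXP{H(\cP\cap B_n)\mid\cF_i}-\EXP{H(\cP\cap B_n)\mid\cF_{i-1}}$. Representing $D_i$ by resampling the points of $Q_i$ through an independent copy expresses it as a conditional expectation of the effect of those points, which unfolds as a telescoping sum of add-one costs; strong stabilization and translation invariance then show that, off an event of small probability, this effect depends only on the configuration within a finite stabilization radius of $Q_i$ and is asymptotically distributed as a fixed random variable built from $D_H(\infty)$. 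Summing the conditional variances $\EXP{D_i^2\mid\cF_{i-1}}$ over the roughly $n$ interior cubes gives $n^{-1}\VAR{H(\cP\cap B_n)}\to\sigma^2$, the resulting per-unit-volume variance, while $\sum_i\EXP{D_i^4}=O(n)$ (from Condition~3) furnishes the Lindeberg condition; the martingale central limit theorem then yields the Poisson CLT.

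For the binomial case I would de-Poissonize rather than build a fresh martingale. With $N$ a Poisson$(n)$ variable one has $\cP\cap B_n\eqindist\cU_{N,B_n}$, so comparing $H(\cP\cap B_n)$ with $H(\cU_n)=H(\cU_{n,B_n})$ amounts to inserting or deleting the surplus $N-n\approx\sqrt n$ points; by strong stabilization and translation invariance each such point contributes, to leading order, an amount close to $D_H(\infty)$, giving $H(\cP\cap B_n)\approx H(\cU_n)+(N-n)\,\EXP{D_H(\infty)}$ with the count fluctuation $N-n$ asymptotically Gaussian and asymptotically independent of the centred $H(\cU_n)$. Taking variances in the unit-intensity normalization yields $\sigma^2=\tau^2+(\EXP{D_H(\infty)})^2$, whence $n^{-1}\VAR{H(\cU_n)}\to\tau^2=\sigma^2-(\EXP{D_H(\infty)})^2\le\sigma^2$, with strict inequality exactly when $\EXP{D_H(\infty)}\neq 0$; subtracting the independent Gaussian count term from the Poisson CLT delivers the binomial CLT. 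The positivity assertions follow from a complementary lower bound showing that a non-degenerate $D_H(\infty)$ forces $\tau^2>0$ through its contribution to the limiting variance, and then $\tau^2\le\sigma^2$ gives $\sigma^2>0$. Strong (rather than merely weak) stabilization is indispensable here, since the de-Poissonization requires the add-one cost to coincide with $D_H(\infty)$ uniformly over the large, globally constrained configurations produced by conditioning on the point count.

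The main obstacle is the control of the martingale differences. One must show that strong stabilization genuinely localizes each $D_i$, so that its conditional variance converges to the correct per-cube limit and, after summation, the conditional-variance condition holds in probability rather than merely in mean. Two points are delicate: first, the contributions of cells meeting the boundary shell and of the rare events on which the stabilization radius is large must be shown negligible, which is exactly where Conditions~(C) and (D), the polynomial bound, and the uniform fourth-moment bound are combined -- the moment bounds converting small ``bad-configuration'' probabilities into negligible $L^1$ contributions via Cauchy--Schwarz, and Condition~(C) ensuring the non-localized boundary cells are asymptotically irrelevant. Second, the de-Poissonization demands uniform-in-$n$ second-moment control of the add-one cost and a coupling sharp enough to isolate the exact correction $(\EXP{D_H(\infty)})^2$; making the approximation $H(\cP\cap B_n)\approx H(\cU_n)+(N-n)\,\EXP{D_H(\infty)}$ quantitative, with an error that is $o(\sqrt n)$ in the appropriate sense, is the technically heaviest part of the argument.
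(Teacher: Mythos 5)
This statement is not proved in the paper at all: it is imported verbatim from Penrose and Yukich (Theorem 2.1 of their 2001 paper, the paper's reference [Penrose01]), with the authors adding only the remark that weak stabilization suffices for the Poisson part. Your blind sketch is in fact a faithful reconstruction of the actual argument in that source --- a martingale central limit theorem over ordered unit cubes (in the tradition of Kesten--Lee), with stabilization localizing the martingale differences and the fourth-moment condition giving the Lindeberg bound, followed by de-Poissonization via the coupling $\cP\cap B_n \eqindist \cU_{N,B_n}$ with $N\sim\Poi(n)$, producing the correction $\tau^2=\sigma^2-\left(\EXP{D_H(\infty)}\right)^2$ and the strict inequality when $\EXP{D_H(\infty)}\neq 0$ --- so relative to the cited proof your approach is essentially the same and correct in outline.
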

	The last statement  follows from the remark below \cite[Theorem 2.1]{Penrose01}. In the Poisson case the strongly stabilizing condition required for the central limit theorem can be replaced by the so-called \emph{weak stabilization} condition, as in  \cite[Theorem 3.1]{Penrose01}. $H$ is said to be weakly stabilizing if there exists a random variable $D_{\infty}(H)$ such that
	\beq
	\label{weakstab-rob}
	(D_OH)(\cP \cap A_n)\  \convas\  D_{\infty}(H),
	\eeq
	for any   $\mathfrak{B}$-valued sequence $A_n$ growing to $\mR^d$.
	
	\bibliographystyle{plain}
	\bibliography{limit_theorems_betti_numbers}
	
\end{document}